\newcommand{\vwo}{\vw_\ast}
\newcommand{\hvw}{\hat\vw}
\newcommand{\tvw}{\tilde\vw}
\newcommand{\vwt}{\vw_t}
\newcommand{\vwn}{\vw_{t+1}}
\newcommand{\vui}{\vu^t_i}
\newcommand{\vun}{\vu^t_{i+1}}
\newcommand{\vup}{\vu^t_{i-1}}
\newcommand{\lrelu}{\varrho}
\newcommand{\ts}{\tilde s}
\newcommand{\tsigma}{\tilde\sigma}
\newcommand{\taumax}{{\tau_{\max}}}
\newtheorem{assumption}[theorem]{Assumption}
\newcommand{\alg}{\textsf{AGGLIO}\xspace}
\newcommand{\mytitle}{\alg: Global Optimization for Locally Convex Functions}
\newcommand{\suppcite}{appendices\xspace}
\newcommand{\suppurlcite}{\suppcite}
\newcommand{\suppciteshort}{appendices\xspace}
\newcommand{\codecite}{repository \href{\codeurl}{[link]}\xspace}
\title{\mytitle}
\author{Debojyoti Dey \and Bhaskar Mukhoty \and Purushottam Kar\\IIT Kanpur\\\texttt{\{debojyot,bhaskarm,purushot\}@cse.iitk.ac.in}}
\begin{document}

\maketitle

\begin{abstract}
This paper presents \alg (Accelerated Graduated Generalized LInear-model Optimization), a stage-wise, graduated optimization technique that offers global convergence guarantees for non-convex optimization problems whose objectives offer only local convexity and may fail to be even quasi-convex at a global scale. In particular, this includes learning problems that utilize popular activation functions such as sigmoid, softplus and SiLU that yield non-convex training objectives. \alg can be readily implemented using point as well as mini-batch SGD updates and offers provable convergence to the global optimum in general conditions. In experiments, \alg outperformed several recently proposed optimization techniques for non-convex and locally convex objectives in terms of convergence rate as well as convergent accuracy. \alg relies on a graduation technique for generalized linear models, as well as a novel proof strategy, both of which may be of independent interest. Code for \alg is available at the following \codecite.
\end{abstract}

\section{Introduction}
\label{sec:intro}

The Generalized Linear Model (GLM) \cite{NelderWedderburn1972} provides a convenient extension to the linear regression model by allowing responses to be transformed using an \emph{inverse link} function. More specifically, given a covariate $\vx \in \bR^d$ and a gold linear model $\vwo \in \bR^d$, the corresponding response $y \in \bR$ is generated such that
\[
\E{y} = g^{-1}(\vx^\top\vwo),
\]
where $g^{-1}: \bR \rightarrow \bR$ is the inverse link function. GLMs play a central role in machine learning, allowing predictions to be made across a variety of label spaces: using the identity link retrieves least squares regression for real-valued labels, the logit link offers logistic regression for binary-valued labels, whereas the logarithmic link yields Poisson regression for count-valued labels.

The classical GLM is canonically associated with a unique likelihood distribution of the exponential family that allows efficient model estimation via likelihood maximization (e.g., using the IRLS method). The consistency properties of such \emph{M-estimators} is well studied \cite{BalakrishnanWY2017,McCullaghNelder1989}. However, contemporary applications frequently forgo this canonical association in favor of a direct modelling as
\begin{equation}
y = \phi(\vx^\top\vwo) + \epsilon,
\label{eq:gen-main}
\end{equation}
where $\vwo$ is the \emph{gold} model, $\phi: \bR \rightarrow \bR$ is an \emph{activation} function and $\epsilon$ captures model mis-specification and statistical noise. This is especially true when GLMs are recursively applied to yield deep ``neural'' models which popularly use activation functions such as the sigmoid, the ReLU \cite{NairHinton2010} and its variants e.g. \emph{leaky} ReLU \cite{MaasHN2013}, softplus \cite{GlorotBB2011}, GeLU and SiLU \cite{HendrycksGimpel2016}. Indeed, no nice exponential family likelihood distribution exists corresponding to the ReLU-style activation functions such as GeLU, SiLU etc.

In such situations, given data $\bc{(\vx_i,y_i)}_{i=1}^n$ generated using the mechanism specified in \eqref{eq:gen-main}, instead of performing likelihood maximization to recover the gold model $\vwo$, it is common to use a task-specific loss function to perform model recovery. For instance, the squared loss could be used for regression problems,
\begin{equation}
\hat\vw = \argmin_{\vw\in\bR^d}\ \frac1n\sum_{i=1}^n(y_i - \phi(\vx_i^\top\vw))^2,
\label{eq:obj-main}
\end{equation}
binary cross entropy could be used for classification, etc. However, objective functions of the form in \eqref{eq:obj-main} are non-convex for most activation functions, including sigmoid, ReLU, and its variants. This problem is magnified for recursive GLMs giving rise to the challenge of training deep networks provably accurately.

\noindent\textbf{Key Idea.} This paper shows that for a wide range of popular activation functions, objectives such as those presented in \eqref{eq:obj-main} are actually \emph{locally} convex around their global optimum, in fact strongly so. However, this strong convexity is restricted to a tiny neighborhood around the optimum and thus, cannot be directly exploited in practice (since the objective function is decidedly non-convex at a global scale). The paper first establishes a more robust version of this result that shows that by \emph{graduating} the activation function appropriately, this coveted neighborhood of strong convexity can be expanded, allowing standard GD to offer a linear rate of convergence to the global optimum. However, this expansion comes at a cost of diminished gradients that slow down the progress of descent algorithms such as GD/SGD. Balancing these two opposing effects using an adaptive graduation schedule yields \alg.

\noindent\textbf{Contributions.} This paper develops the \alg method that offers convergence to the global optimum at a linear rate for several GLM problems of the kind presented in \eqref{eq:obj-main}. The paper introduces novel concepts of \emph{Extendable Local Strong Convexity} (ELSC) and proof techniques that may find independent application to other problems. In experiments, \alg outperforms several recently proposed optimization techniques for locally convex objectives in terms of convergence rate as well as the accuracy of the final model.

\section{Related Works}
\label{sec:related}

\noindent\textbf{Non-convex Optimization.} The area of non-convex optimization is too vast to be surveyed here in any detail. We instead refer the reader to surveys and monographs such as \cite{DanilovaDGGGKS2020,HastieTW2015,JainKar2017} for a more expansive discussion. The problem areas of compressive sensing, sparse recovery, and matrix completion were among the first non-convex problems to be studied in great detail, giving rise to rich literature focusing on relaxation techniques such as LASSO \cite{HastieTW2015} as well as iterative techniques such as IHT \cite{BlumensathD2009}. More recent work has broadened the scope to general-purpose non-convex optimization, partly due to the need to understand the training of deep models better. Consequently, the focus has shifted to descent techniques \cite{DanilovaDGGGKS2020} that offer stationary solutions and hopefully escape spurious saddle points. Of particular interest is the recent work of \cite{SoltanolkotabiJL2019} that initiates a discussion on the landscape offered by objective functions used to train shallow neural networks. However, the core result in this work assumes a quadratic activation function, which is not popularly used in practice. The work also considers popularly used objective functions such as sigmoid and softplus but for those, it only guarantees convergence if initialization is done sufficiently close to the global optimum. In contrast, the method \alg described in this paper presents an effective way to train GLMs with activation functions by minimizing non-convex objectives such as those presented in \eqref{eq:obj-main}. Specifically, \alg guarantees global convergence without requiring special initialization.

\noindent\textbf{Learning with Structured Objectives.} Analyzing cases where the objective function being minimized has special structure has been a successful approach to developing provably non-convex optimization routines. Starting with the Restricted Isometry Property (RIP) originally proposed for compressed sensing \cite{BlumensathD2009}, various extensions suitable for different applications have been proposed, such as restricted strong convexity (RSC) \cite{JainTK2014}, local strong convexity (LSC) \cite{BalakrishnanWY2017,SoltanolkotabiJL2019}, weighted strong convexity (WSC) \cite{MukhotyGJK2019} and strict local quasi convexity (SLQC) \cite{HazanLS-S2015}. Several of these notions were developed in the context of a particular application, e.g., LSC for the EM algorithm, WSC for robust regression, etc., whereas the notion of SLQC was intended for more general usage and has indeed found applications to several problem areas. The analysis of the \alg algorithm in this paper too shall make use of a novel structural concept, namely \emph{Extendable Local Strong Convexity} (ELSC). The paper will also establish that this structure provably holds for objectives arising in natural learning problems.

\noindent\textbf{Graduated/Reweighted Optimization.} As discussed earlier, classical GLM training is typically done using the IRLS algorithm which is a form of reweighted optimization requiring weighted least-squares problems to be solved repeatedly. \cite{MukhotyGJK2019} adapts the IRLS algorithm to solve non-convex robust regression problems instead. For more general non-convex objectives, \cite{HazanLS-S2016} proposes a technique GradOpt to iteratively \emph{smoothen} the objective function allowing a descent procedure to slip into a valley containing the global optimum. Smoothing is done aggressively at first and moderated later. In experiments, \alg was found to consistently outperform GradOpt, possibly since the objective smoothing step in GradOpt is cumbersome to perform exactly and the stochastic alternative used seems to converge slowly. \alg on the other hand, relies on a graduation technique that can be performed exactly and efficiently.

\noindent\textbf{Acceleration.} Recent works applied acceleration techniques such as momentum \cite{CutkoskyMehta2020,WilsonMW2019}, adaptive gradient \cite{ReddiZSKK2018} and variance reduction \cite{ReddiHSPS2016} to non-convex optimization problems. Applying acceleration \alg is an interesting direction for future work but this paper focuses on establishing the core adaptive graduation technique.

\begin{figure*}[t]
	\centering
	\begin{tabular}{@{\hskip 0.04\textwidth}c@{\hskip 0.04\textwidth}c@{\hskip 0.04\textwidth}c@{\hskip 0.04\textwidth}c@{\hskip 0.04\textwidth}}
		\includegraphics[width=0.2\textwidth]{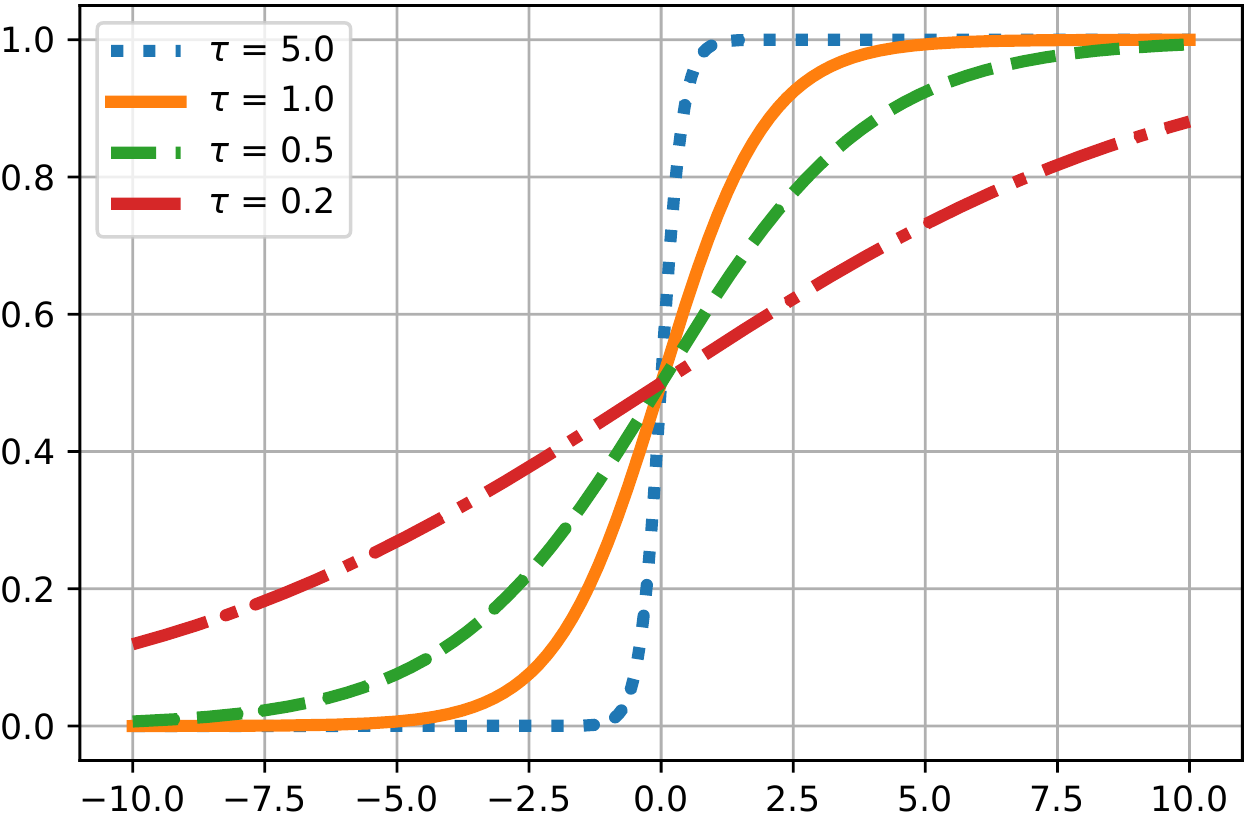} & \includegraphics[width=0.2\textwidth]{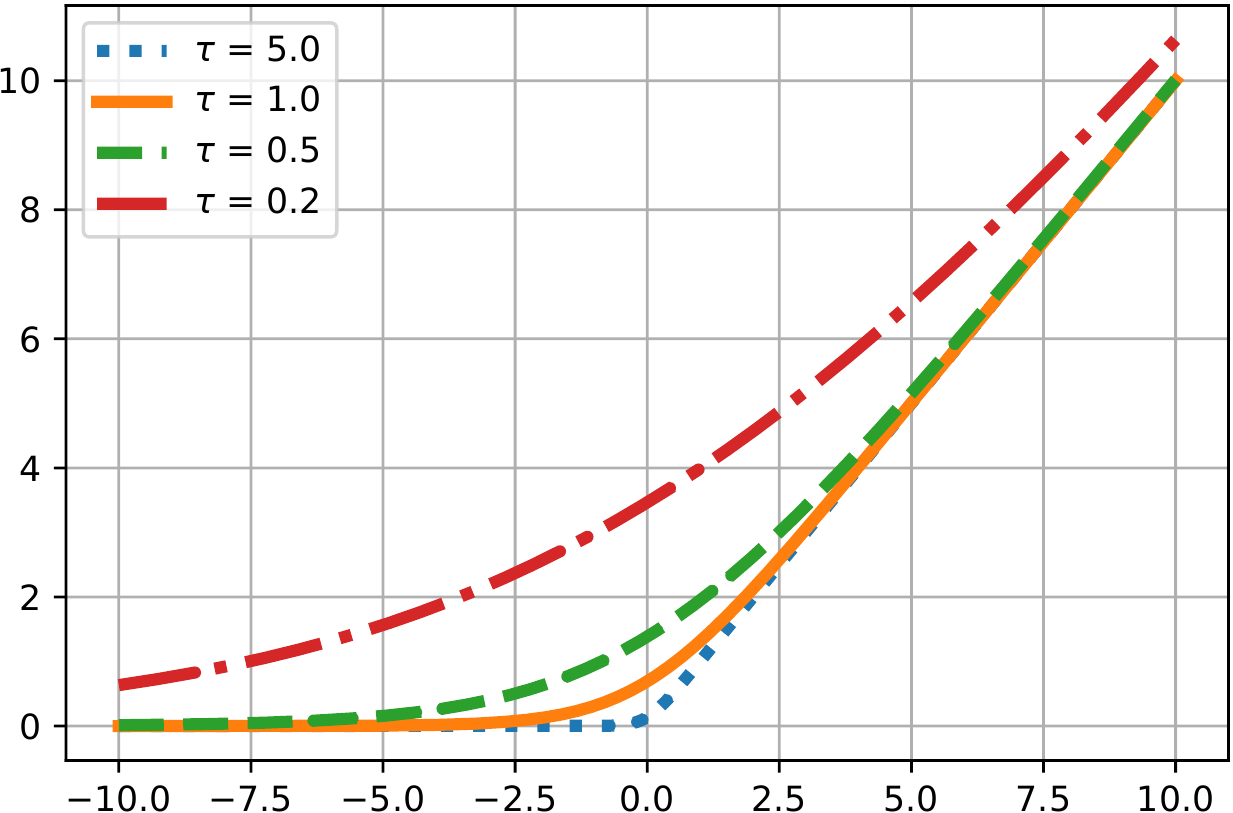} & \includegraphics[width=0.2\textwidth]{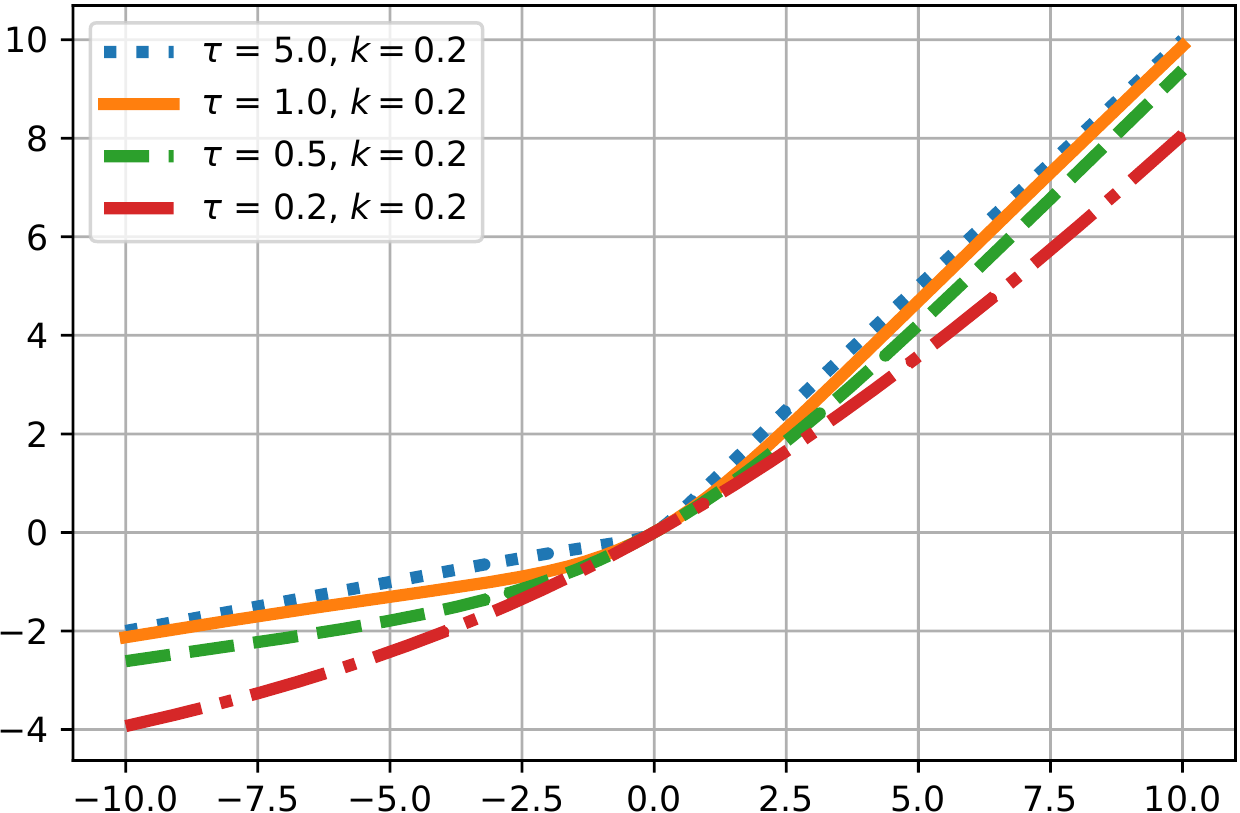} & \includegraphics[width=0.2\textwidth]{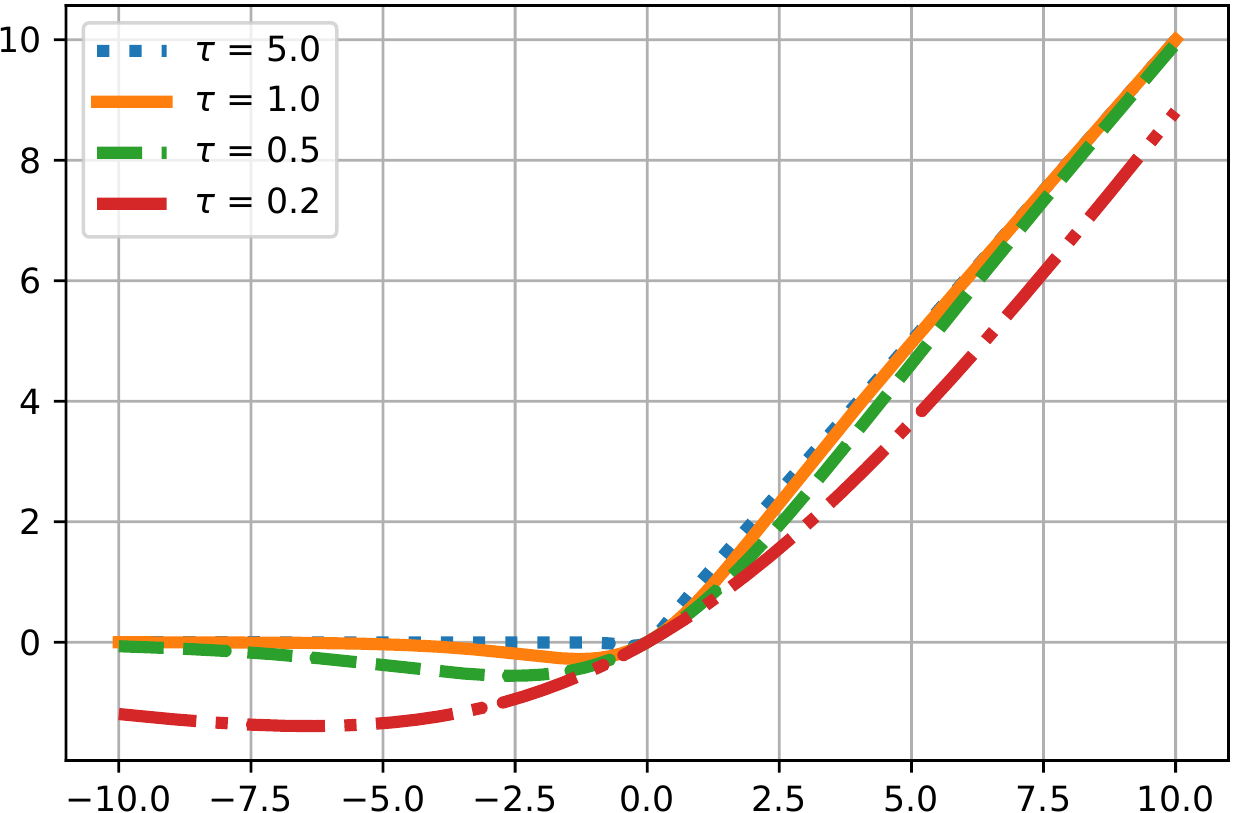} \\
		\includegraphics[width=0.2\textwidth]{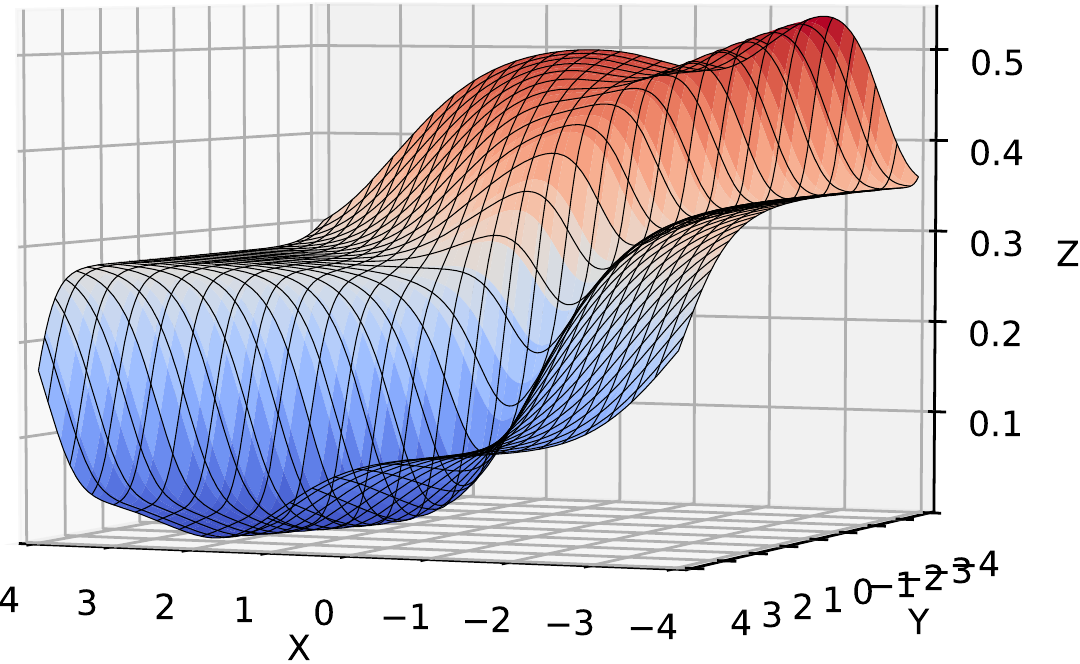} & \includegraphics[width=0.2\textwidth]{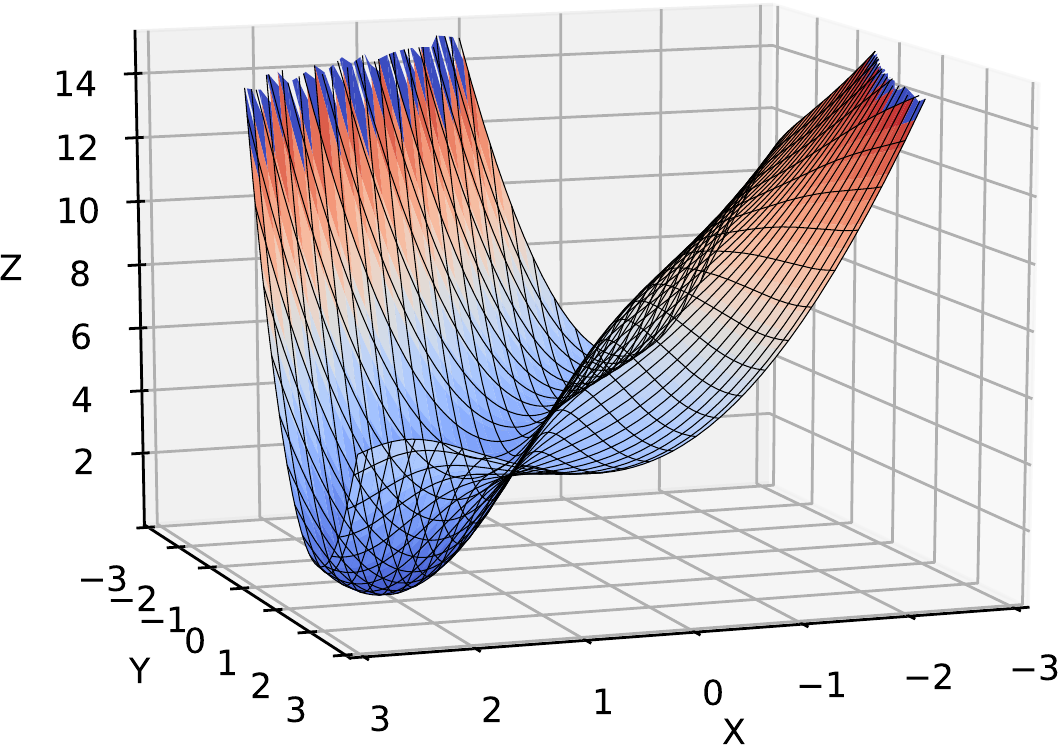} & \includegraphics[width=0.2\textwidth]{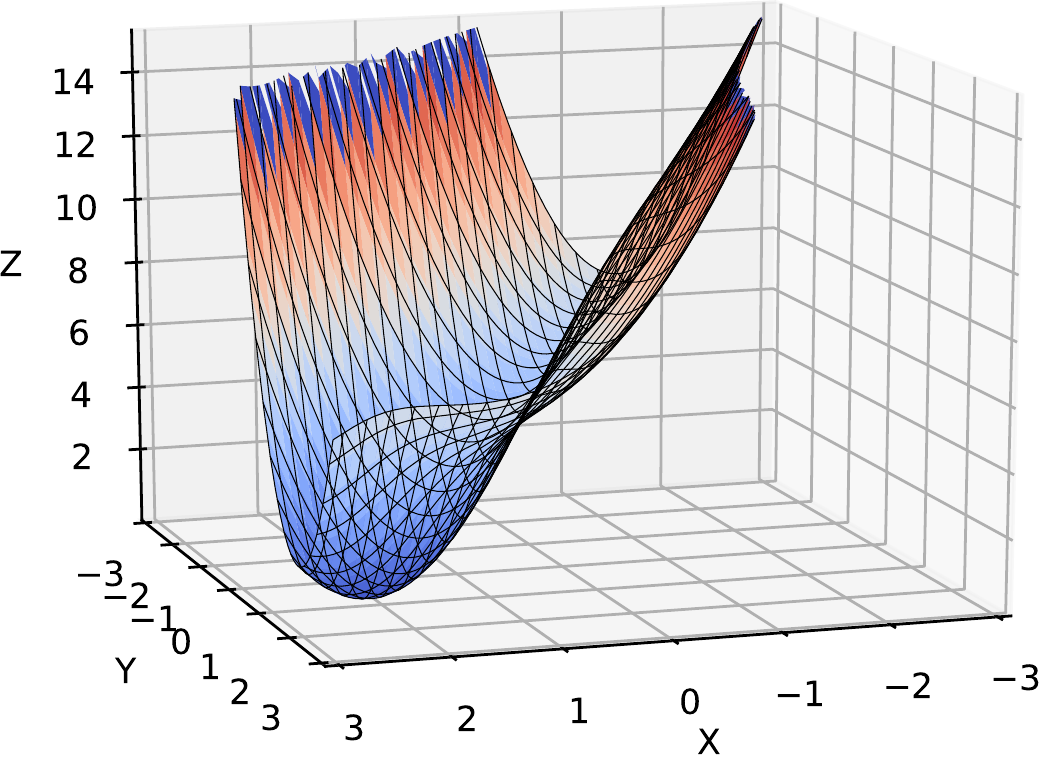} & \includegraphics[width=0.2\textwidth]{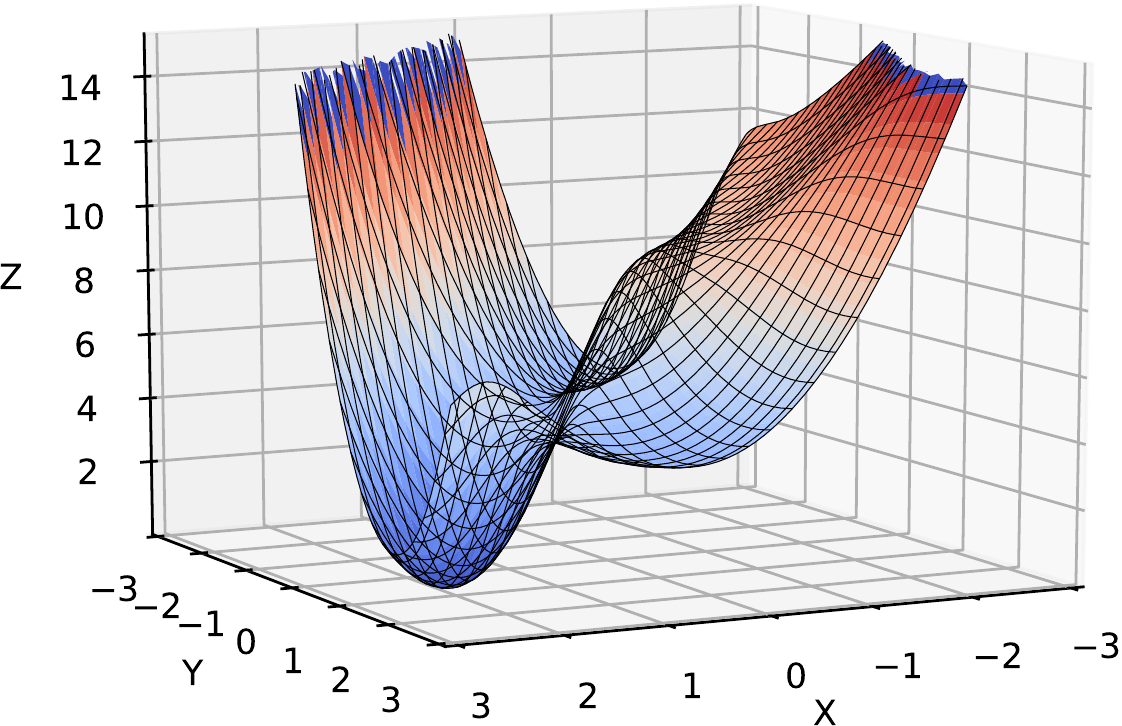}\\
		\includegraphics[width=0.2\textwidth]{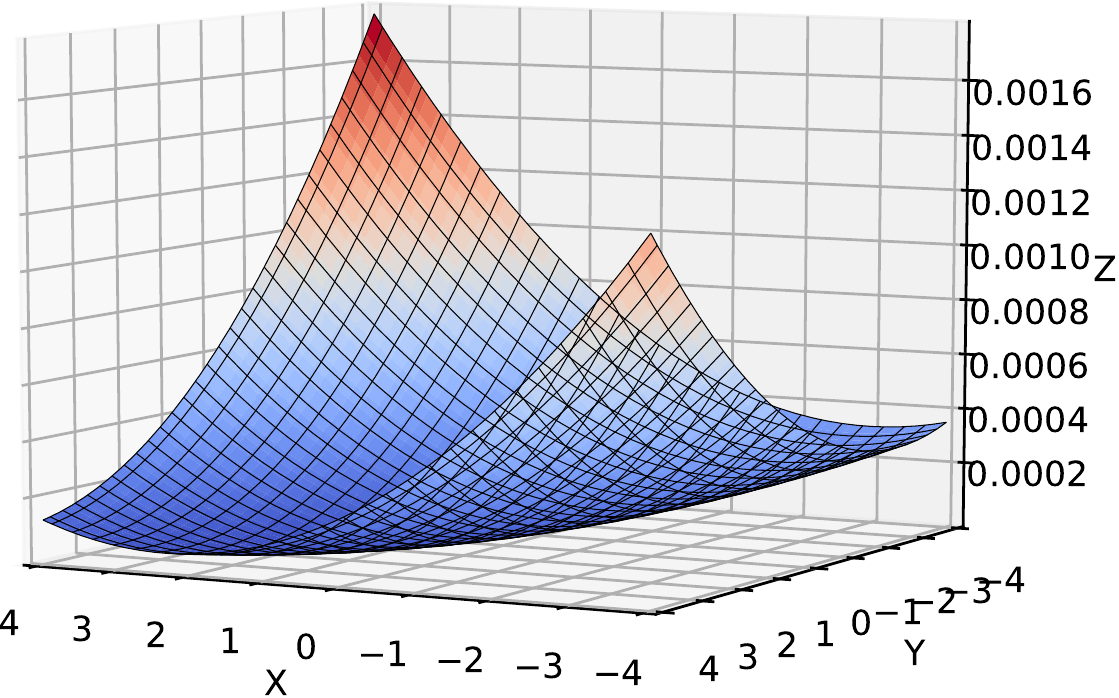} & \includegraphics[width=0.2\textwidth]{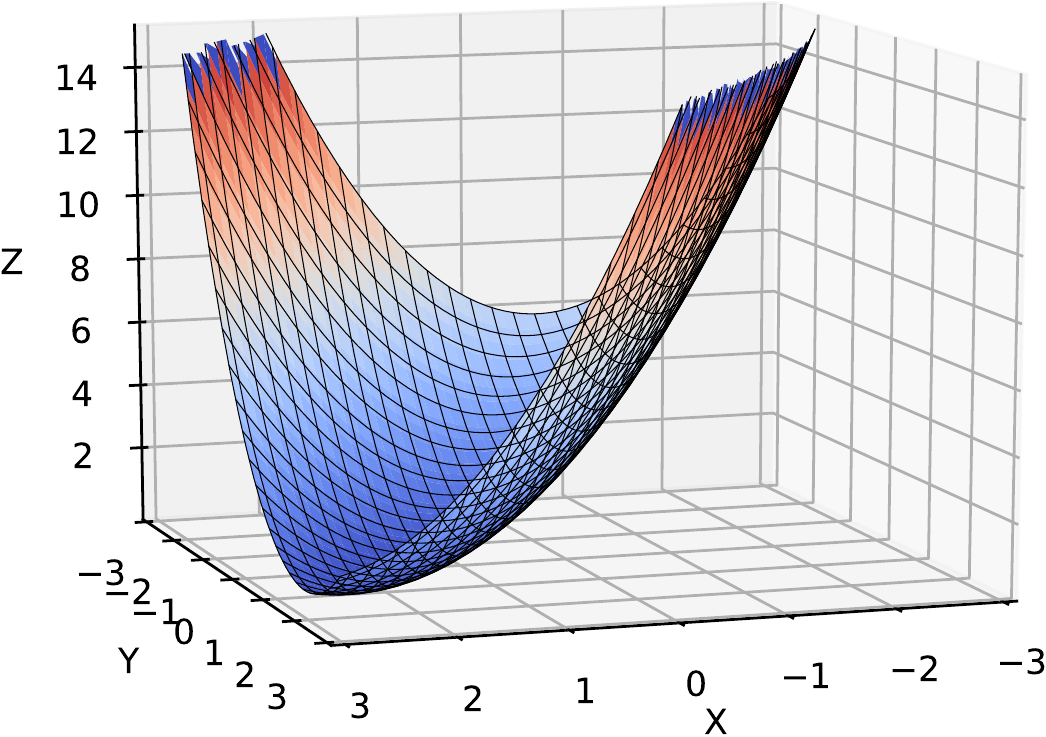} & \includegraphics[width=0.2\textwidth]{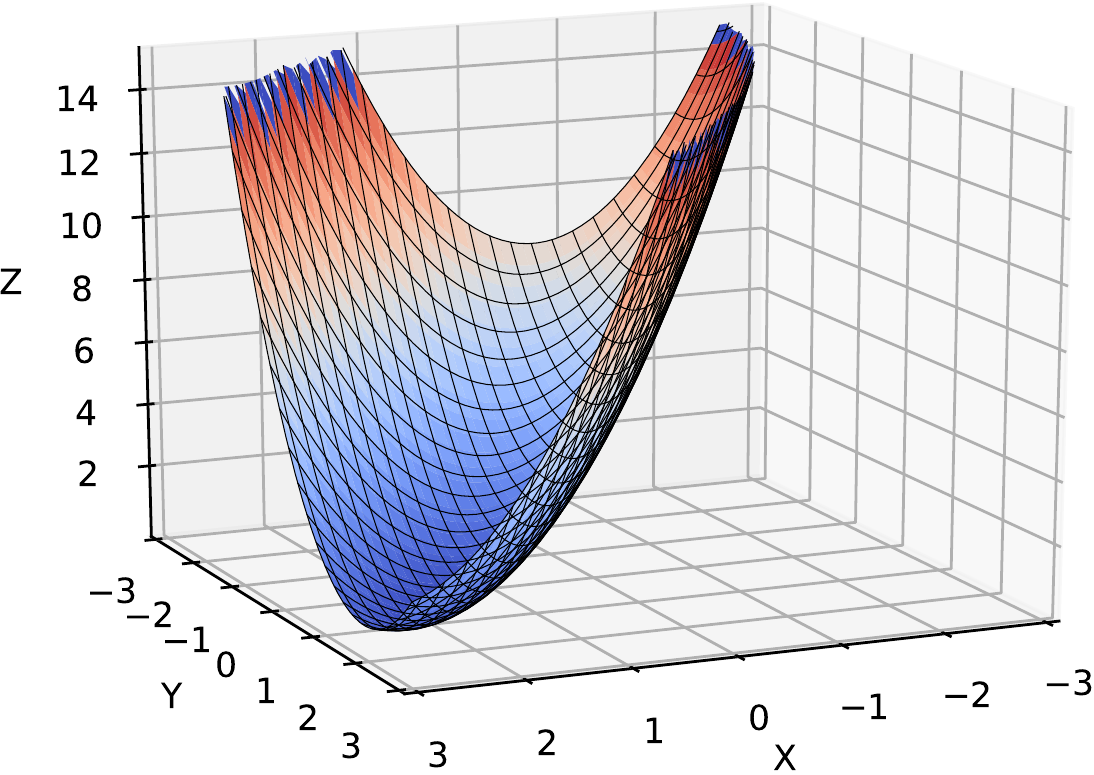} & \includegraphics[width=0.2\textwidth]{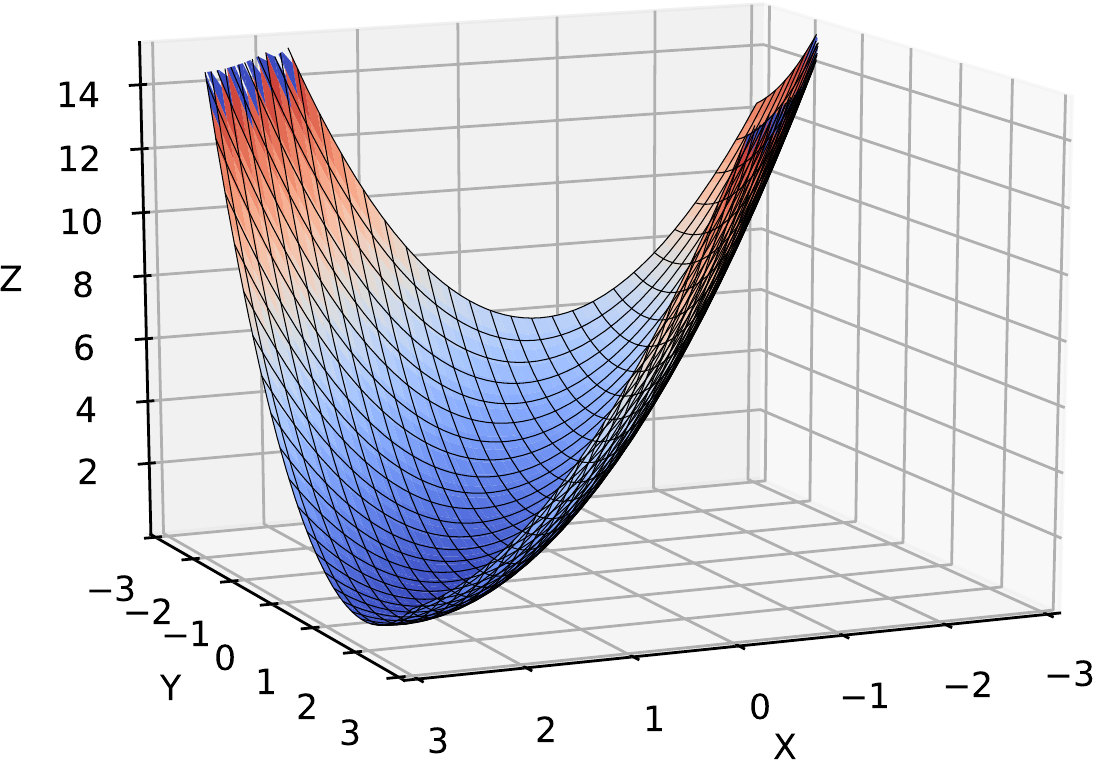}\\[\abovecaptionskip]
		\small (a) Sigmoid & \small (b) Softplus & \small (c) Leaky Softplus & \small (d) SiLU
  \end{tabular}
\caption{Effect of graduation on the local strong convexity of training objectives using 4 different activation functions. (Top Row) Effect of different graduation levels. (Middle Row) The training objective from \eqref{eq:obj-main} in a region around the global optimum $\vwo$ when using un-graduated activations i.e. $\tau = 1$ and $\cL \equiv \cL_1$ is being plotted. (Bottom Row) The objective function in the same neighborhood but with strong graduation $\tau = 0.01$ i.e. $\cL_{0.001}$ is being plotted. Graduation can flatten the objective (e.g. for sigmoid, $z$ axis values are in the range $0 - 0.0016$ after graduation but were $0 - 0.5$ before). Excessive graduation can slow down descent techniques like SGD but \alg's graduation schedule provides a fine balance.}
\label{fig:surfaces}
\end{figure*}

\begin{table*}[t]%
\caption{ELSC/ELSS constants for various activation functions. For any $r > 0$, the shorthand $m_r$ denotes $\max_{\vw \in \cB(\vwo,r)}\ \norm\vw_2$. Note that $m_r \leq \norm\vwo_2 + r$ by triangle inequality. $\tau$ denotes the temperature. The ELSC/ELSS expressions presented here omit universal constants to avoid clutter (exact expressions are available in the \suppcite).}
\label{tab:elsc-elss}
\begin{minipage}{\textwidth}
\centering
\begin{adjustbox}{max width=\textwidth}
\begin{tabular}{lccc}
\toprule
Activation Function & Graduated Expression $\phi_\tau(v)$ & ELSC constant $\lambda_r$ for $\cL_\tau$ & ELSS constant $\Lambda_r$ for $\cL_\tau$\\
\midrule
sigmoid\footnote{To preserve the canonical range of the sigmoid activation i.e. $[0,1]$, it is not subjected to the $\frac1\tau$ normalization.} & $1/(1+\exp(-\tau\cdot v))$ & $\tau^2(\exp(-\tau\cdot m_r) - \tau\cdot r)$ & $\tau^2(\exp(-\tau\cdot m_r) + \tau\cdot r)$\\
softplus & $\frac1\tau\ln(1 + \exp(\tau\cdot v))$ & $1/(1+\exp(\tau^2\cdot m_r^2))^2 - \tau\cdot r$ & $1/(1+\exp(\tau^2\cdot m_r^2))^2 + \tau\cdot r$\\
leaky softplus\footnote{Similar to the leaky ReLU, the leaky softplus uses a \emph{leakiness} parameter $k \in [0,1]$. $k = 0$ retrieves the softplus whereas $k = 1$ retrieves the identity activation function.} & $\frac1\tau\br{\ln(1 + \exp(\tau\cdot v)) - \ln(1 + \exp(-k\tau\cdot v))}$ & $\frac{1+k^2}{(1+\exp(\tau^2\cdot m_r^2))^2} - \tau\cdot r$ & $\frac{1+k^2}{(1+\exp(\tau^2\cdot m_r^2))^2} + \tau\cdot r$\\
\bottomrule
\end{tabular}
\end{adjustbox}
\end{minipage}
\end{table*}

\section{Problem Setting}
\label{sec:setting}

Recall from Section~\ref{sec:intro} that given covariates $\vx_1,\ldots,\vx_n \in \bR^n$ and labels generated as $y_i = \phi(\vx_i^\top\vwo) + \epsilon_i$ using a gold model $\vwo \in \bR^d$ and an activation function $\phi$ such as sigmoid, SiLU etc, our objective to optimize objectives of the form given in \eqref{eq:obj-main} namely
\begin{equation}
\cL(\vw) = \frac1n\sum_{i=1}^n(y_i - \phi(\vx_i^\top\vw))^2.
\label{eq:obj-main-restated}
\end{equation}
To develop \alg, we need the notion of \emph{graduated} activation functions and objectives as defined below.
\begin{definition}[Graduated Activation and Objective]
Given an activation function $\phi: \bR \rightarrow \bR$ and a \emph{temperature} parameter $\tau > 0$, we define the $\tau$-graduated activation function as
\[
\phi_\tau \deff x \mapsto \frac1\tau\cdot\phi(\tau\cdot x).
\]
For activations with a bounded range such as the sigmoid, the normalization $\frac1\tau$ is omitted. We use the shorthand $\phi \equiv \phi_1$. Given data $\bc{(\vx_i,y_i)}_{i=1}^n$, the graduated objective function is defined as
\[
\cL_\tau(\vw) \deff \frac1n\sum_{i=1}^n(y^\tau_i - \phi_\tau(\vx_i^\top\vw))^2,
\]
where the graduated label $y^\tau_i$ is defined as
\[
y^\tau_i \deff \phi_\tau(\phi^{-1}(y_i)).
\]
We will similarly use the shorthands $y^1_i \equiv y_i$ and $\cL \equiv \cL_1$.
\end{definition}
The sigmoid and (leaky) softplus activation functions are invertible so that the operation $\phi^{-1}$ is well-defined\footnote{The SiLU activation has a one-to-many inverse in the negative half of the real line. However, due to the smoothness of $\phi_\tau$, especially for small temperature values $\tau$, taking either value and applying $\phi_\tau$ to it was found to work well.}. Figure~\ref{fig:surfaces} (top row) shows the graduated versions of various activation functions such as sigmoid, (leaky) softplus and SiLU. Note that the loss surfaces of the original objectives i.e. $\cL \equiv \cL_1$ are always non-convex (see Figure~\ref{fig:surfaces} middle row) with near-stationary points far away from the optimum. There does exist a small basin of convexity but very close to the optimum. However, Figure~\ref{fig:surfaces} (bottom row) shows that once graduated sufficiently aggressively (i.e. using a small enough value of temperature $\tau$), this basin of local strong convexity extends to a much larger region of space. This motivates the following notion of extendable local strong convexity and strong smoothness that is key to the \alg approach. The following discussion considers the noiseless setting i.e. $y_i = \phi(\vx_i^\top\vwo)$ to present the core ideas without worrying about noise. Section~\ref{sec:analysis} will discuss the noisy setting.
\begin{definition}[Extendable Local Strong Convexity, Smoothness]
\label{defn:elsc-elss}
Consider a training objective $\cL$ as in \eqref{eq:obj-main-restated} constructed using data $\bc{(\vx_i,y_i)}_{i=1}^n$ with noiseless labels i.e. $y_i = \phi(\vx_i^\top\vwo)$ corresponding to a gold model $\vwo$. Then $\cL$ is said to posses extendable local strong convexity ELSC (resp. extendable local strong smoothness ELSS) if for any $r > 0$, there exists some temperature $\tau_r > 0$ and some constant $\lambda_r > 0$ (resp. $\Lambda_r > 0$) such that graduated objective $\cL_{\tau_r}$ is $\lambda_r$-strongly convex (resp. $\Lambda_r$-strongly smooth) within the ball $\cB(\vwo, r)$ of radius $r$ centered at the gold model $\vwo$. Specifically, we wish that for any $\vu, \vv \in \cB(\vwo,r)$ we must have
\begin{align*}
	\cL_{\tau_r}(\vv) &\geq \cL_{\tau_r}(\vu) + \ip{\nabla\cL_{\tau_r}(\vu)}{\vv-\vu} + \frac{\lambda_r}2\norm{\vu-\vv}_2^2\\
	\cL_{\tau_r}(\vv) &\leq \cL_{\tau_r}(\vu) + \ip{\nabla\cL_{\tau_r}(\vu)}{\vv-\vu} + \frac{\Lambda_r}2\norm{\vu-\vv}_2^2
\end{align*}
\end{definition}
Definition~\ref{defn:elsc-elss} allows the local basin of strong convexity around $\vwo$ to be expanded using a graduated objective. It now becomes important to assure ourselves that these nice properties can indeed be expected in realistic scenarios. Lemma~\ref{lem:elsc-elss} and Table~\ref{tab:elsc-elss} assure us that appropriate local strong convexity/smoothness constants do exist for several activation functions when data covariates are sampled i.i.d. from the standard normal distribution i.e. $\vx_i \sim \cN(\vzero,I_d)$ where $I_d$ is the $d \times d$ identity matrix.
\begin{lemma}[ELSC/ELSS Precursor]
\label{lem:elsc-elss}
Suppose $\norm\vwo_2 \leq R$ and data covariates are sampled i.i.d. from a $d$-dimensional normal distribution $\vx_i \sim \cN(\vzero, I_d)$. If the training set has $n \geq \Om{d\log d}$ data points, then with probability at least $1 - \exp\br{-\Om n}$, for all values of $r \in [0,2R]$ and all values of the temperature parameter $\tau \in [0,1]$, the objective $\cL_\tau$ satisfies the local strong convexity/smoothness properties in the ball $\cB(\vwo, r)$ with the constants $\lambda_r,\Lambda_r$ as given in Table~\ref{tab:elsc-elss}.
\end{lemma}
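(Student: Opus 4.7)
I would work with the Hessian of the graduated squared loss,
\[
\nabla^2\cL_\tau(\vw) = \frac{2}{n}\sum_{i=1}^n \phi_\tau'(\vx_i^\top\vw)^2\,\vx_i\vx_i^\top \;-\; \frac{2}{n}\sum_{i=1}^n (y_i^\tau - \phi_\tau(\vx_i^\top\vw))\,\phi_\tau''(\vx_i^\top\vw)\,\vx_i\vx_i^\top,
\]
and bound its eigenvalues uniformly over $\vw \in \cB(\vwo, r)$. The first ``Fisher-information-like'' term supplies the positive curvature recorded by the leading constants in Table~\ref{tab:elsc-elss}, whereas the second ``residual'' term vanishes at $\vw = \vwo$ (because noiseless labels give $y_i^\tau = \phi_\tau(\vx_i^\top\vwo)$) and will contribute only the additive $\pm\tau r$ correction in the table.

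\textbf{Population eigenvalues and concentration.} For fixed $\vw$, I decompose $\vx_i \sim \cN(\vzero,I_d)$ as $\vx_i = \alpha_i\hat\vw + \vz_i$ with $\alpha_i \sim \cN(0,1)$ and $\vz_i$ orthogonal to $\vw$ and independent of $\alpha_i$. Cross terms vanish in expectation, giving
\[
\E{\phi_\tau'(\vx^\top\vw)^2\vx\vx^\top} = \E{\phi_\tau'(\alpha\norm\vw_2)^2\alpha^2}\hat\vw\hat\vw^\top + \E{\phi_\tau'(\alpha\norm\vw_2)^2}\br{I_d - \hat\vw\hat\vw^\top},
\]
whose smallest eigenvalue is the minimum of the two scalar expectations. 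Evaluating $\phi_\tau'$ for each activation and using monotonicity of the sigmoid/softplus derivatives together with $\norm\vw_2 \leq m_r$ recovers the leading factors $\tau^2\exp(-\tau m_r)$ and $(1+\exp(\tau^2 m_r^2))^{-2}$ appearing in Table~\ref{tab:elsc-elss}; the matching upper bounds follow symmetrically. A truncated matrix-Bernstein inequality then shows that, since $\phi_\tau'$ is uniformly bounded and $\vx_i\vx_i^\top$ is a sub-exponential matrix with expectation $I_d$, the empirical first term concentrates around this population expression to within any prescribed constant slack with probability $1 - \exp(-\Om n)$ whenever $n \geq \Om{d\log d}$.

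\textbf{Perturbation and uniform lift.} To move from $\vw = \vwo$ to arbitrary $\vw \in \cB(\vwo, r)$ I would Taylor-expand both pieces of the Hessian. Since $|y_i^\tau - \phi_\tau(\vx_i^\top\vw)| \leq \sup|\phi_\tau'|\cdot|\vx_i^\top(\vw-\vwo)|$ and $|\phi_\tau'(\vx_i^\top\vw)^2 - \phi_\tau'(\vx_i^\top\vwo)^2| \leq 2\sup|\phi_\tau'\phi_\tau''|\cdot|\vx_i^\top(\vw-\vwo)|$, and because $\vx_i^\top(\vw-\vwo)$ is sub-Gaussian of scale $O(r)$, both contributions are $O(\tau r)$ in operator norm (after absorbing the $\tau^2$ prefactor present for the unnormalized sigmoid), matching the $\pm\tau r$ correction in the table. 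The pointwise bounds are finally lifted to a uniform statement over $(\vw,r,\tau) \in \cB(\vwo,2R)\times[0,2R]\times[0,1]$ by an $\epsilon$-net argument: because $\phi_\tau'$ and $\phi_\tau''$ stay bounded by constants independent of $1/\tau$ under the graduation normalization, the Hessian is Lipschitz in its arguments with a benign constant, so a polynomial-in-$d$ net on $\vw$ together with polynomial-in-$1/\epsilon$ nets on $r$ and $\tau$ suffice for the $\exp(-\Om n)$ failure bound to survive the union bound at $n \geq \Om{d\log d}$.

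\textbf{Main obstacle.} The delicate point is that the leading curvatures $\tau^2\exp(-\tau m_r)$ and $(1+\exp(\tau^2 m_r^2))^{-2}$ decay rapidly with $\tau$ and $m_r$, so the concentration slack and perturbation correction must both be controlled in the \emph{same} $\tau$-regime rather than overwhelming the signal. This is exactly what the graduation normalization $\phi_\tau(v) = \frac{1}{\tau}\phi(\tau v)$ (or its omission for sigmoid) is engineered to accomplish: it pins down the scaling of $\phi_\tau'$ and $\phi_\tau''$ in $\tau$, which is why the perturbation correction enters linearly in $\tau r$ and the covering-net cardinality does not blow up as $\tau \to 0$.
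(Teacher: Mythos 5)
Your proposal follows essentially the same route as the paper's proof: the same split of the Hessian weights into a positive Fisher-type term plus a residual bounded via Lipschitzness by $O(\tau)\,\abs{\vx_i^\top(\vw-\vwo)}$ (yielding the $\pm\tau r$ correction), the same Gaussian rotational-symmetry reduction for the population eigenvalues, Bernstein-type concentration, and $\epsilon$-net union bounds over directions, models, $r$, and $\tau$. The only cosmetic differences are your use of matrix Bernstein in place of the paper's scalar sub-exponential Bernstein plus a net over unit vectors, and your Taylor expansion around $\vwo$ where the paper evaluates the leading expectation directly at the worst point $m_r$ in the ball; both are sound.
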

Section~\ref{sec:method} uses Lemma~\ref{lem:elsc-elss} and gives a way to choose $\tau_r$ for a given $r > 0$ so that the resulting constants $\lambda_r,\Lambda_r$ from Table~\ref{tab:elsc-elss} are always strictly positive thus fulfilling the ELSC/ELSS conditions. Similar results can be obtained for data covariates sampled from non-standard Gaussians or general subGaussian distributions by using standard techniques \cite{BhatiaJK2015,MukhotyGJK2019} but with less sharp constants.\\

\noindent\textbf{Full Proofs and Code.} Due to lack of space, all detailed calculations and proofs have been provided in the \suppurlcite. Code for \alg is available at the following \codecite.%

\begin{algorithm}[t]
	\begin{algorithmic}[1]
		{
		\REQUIRE Training data $\bc{(\vx_i,y_i}_{i=1}^n$, activation function $\phi$, initial temperature $\tau_0$, temperature increments $\beta_t > 1$, step lengths $\eta_t$
		\ENSURE An estimate $\hvw$ of the gold model $\vwo$
		\STATE Initialize $\vw_0$ and set $t \leftarrow 0$ \COMMENT{For example $\vw_0 \leftarrow \vzero$}
		\FOR{$t = 0, 1, 2, \ldots, T-1$}
			\STATE Obtain graduated labels $y^{\tau_t}_i = \phi_{\tau_t}(\phi^{-1}(y_i))$
			\STATE Construct the graduated objective function
			\[
			\cL_{\tau_t}(\vw) = \frac1n\sum_{i=1}^n(y^{\tau_t}_i - \phi_{\tau_t}(\vx_i^\top\vw))^2
			\]
			\STATE Let $\vw_{t+1} \leftarrow \vw_t - \eta_t\cdot\left.\nabla_{\vw}\cL_{\tau_t}(\vw)\right|_{\vw_t}$ \COMMENT{Alternatives are mini-batch SGD, Adam, variance reduction etc}
			\STATE $\tau_{t+1} \leftarrow \min\bc{\beta_t\cdot\tau_t, 1}$ \COMMENT{Cap temperature at unity}
			\STATE $t \leftarrow t + 1$
		\ENDFOR
		\STATE \textbf{return} {$\vw^T$}
		}
	\end{algorithmic}
	\caption{\alg Pseudocode}
	\label{algo:main}
\end{algorithm}

\section{\alg: Method Description}
\label{sec:method}
Using Lemma~\ref{lem:elsc-elss}, we first instantiate the ELSC/ELSS property from Definition~\ref{defn:elsc-elss} and then derive the \alg algorithm. We take sigmoid activation as an example to present the arguments, but similar arguments hold for other activation functions too.

\begin{lemma}[ELSC/ELSS Guarantee -- Sigmoid Activation]
\label{lem:sigmoid-elsc}
Suppose we are working with the sigmoid activation function. Then for any neighborhood radius $r \in [0,2R]$ (recall that $R \geq \norm{\vwo}_2$ is an upper bound on the norm of the gold model), using any value of temperature $\tau_r \leq \min\bc{\frac{\exp(-R)}{3r},1}$ always ensures the ELSC/ESSS properties with constants $\lambda_r \geq \tau^2\cdot\exp(-\tau R)/2 \geq \tau^2\cdot\exp(-R)/2$ and $\Lambda_r \leq \tau^2\cdot\exp(-\tau R) + \tau^3 R \leq \tau^2(1 + R)$.
\end{lemma}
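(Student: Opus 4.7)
The plan is a direct instantiation of Lemma~\ref{lem:elsc-elss} (the ELSC/ELSS Precursor) for the sigmoid row of Table~\ref{tab:elsc-elss}, followed by a calibration check confirming that the prescribed temperature choice $\tau_r$ is tight enough to yield the claimed closed-form bounds.

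First, I would invoke Lemma~\ref{lem:elsc-elss} with the sigmoid activation to read off the raw ELSC/ELSS constants, which (up to universal constants absorbed as per the table's footnote) take the form $\lambda_r = \tau^2(\exp(-\tau m_r) - \tau r)$ and $\Lambda_r = \tau^2(\exp(-\tau m_r) + \tau r)$, where $m_r = \max_{\vw \in \cB(\vwo, r)} \norm\vw_2$. The hypothesis $\norm\vwo_2 \leq R$ combined with the triangle inequality gives $m_r \leq \norm\vwo_2 + r \leq R + r$, and in particular $m_r \leq 3R$ on the allowed range $r \in [0, 2R]$.

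Next, for the ELSC lower bound, I would verify that the prescribed cap $\tau_r \leq \exp(-R)/(3r)$ is tight enough to dominate the subtractive perturbation $\tau r$ with the exponential $\exp(-\tau m_r)$. From $\tau_r r \leq \exp(-R)/3$ and $\tau_r \leq 1$, one has $\exp(-\tau_r m_r) \geq \exp(-\tau_r R)\exp(-\tau_r r)$, and the smallness of $\tau_r r$ guarantees $\tau_r r \leq \tfrac12 \exp(-\tau_r m_r)$. Substituting back gives $\lambda_r \geq \tfrac12 \tau^2 \exp(-\tau_r m_r) \geq \tfrac12 \tau^2 \exp(-\tau_r R)$, and the cruder bound $\tfrac12 \tau^2 \exp(-R)$ follows via $\tau_r \leq 1$. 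For the ELSS upper bound, I would use $\exp(-\tau m_r) \leq \exp(-\tau R)$ (available through the finer bookkeeping in the appendix) together with $\tau r \leq \tau R$ (using $r \leq 2R$ and absorbing the factor of $2$ into the hidden universal constant) to obtain $\Lambda_r \leq \tau^2 \exp(-\tau R) + \tau^3 R$; the uniform bound $\Lambda_r \leq \tau^2(1+R)$ then follows from $\exp(-\tau R) \leq 1$ and $\tau \leq 1$.

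The only delicate point, and the main obstacle, is the calibration of the constant $3$ in the temperature cap. This factor is precisely what makes the exponential slack $\exp(-\tau_r r) \geq \exp(-1/3)$ absorbable into a clean factor of $\tfrac12$; a smaller denominator would allow $\lambda_r$ to collapse to zero, while a larger one would needlessly shrink $\tau_r$ and degrade the $\tau^2$ prefactor in $\lambda_r$. Once this numerical calibration is fixed, the remainder of the proof is routine substitution.
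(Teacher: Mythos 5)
Your proposal follows the paper's proof essentially verbatim: read the sigmoid constants $\lambda_r = \tau^2(\exp(-\tau m_r) - \tau r)$ and $\Lambda_r = \tau^2(\exp(-\tau m_r) + \tau r)$ off Table~\ref{tab:elsc-elss} (justified by Lemma~\ref{lem:elsc-elss}), then check that the cap $\tau_r \leq \exp(-R)/(3r)$ makes the subtractive $\tau r$ term absorbable into a factor of $\tfrac12$ -- the paper encodes exactly this calibration via the inequality $\tfrac v3\exp(\tfrac v3) \leq \tfrac v2$ for $v\in[0,1]$. One link in your chain is written in the wrong direction: $\exp(-\tau_r m_r) \geq \exp(-\tau_r R)$ requires $m_r \leq R$, whereas $m_r = \norm{\vwo}_2 + r$ can be as large as $R + r$; what you actually need (and what your own numerics already deliver, since $\tau_r r \leq \exp(-R)/3$ gives $\exp(-\tau_r r) - \tfrac12 \geq \tfrac13$ for $R\geq 1$) is the slightly stronger statement $\tau_r r \leq (\exp(-\tau_r r) - \tfrac12)\exp(-\tau_r R)$, which yields $\exp(-\tau_r(R+r)) - \tau_r r \geq \tfrac12\exp(-\tau_r R)$ directly. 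The analogous looseness in your ELSS step ($\exp(-\tau m_r) \leq \exp(-\tau R)$ and $r \leq R$) is shared by the paper itself and is harmless for the final form $\Lambda_r \leq \tau^2(1+R)$ up to universal constants.
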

\begin{proof}
We notice that Table~\ref{tab:elsc-elss} assures (upto universal constants) that for any radius $r$, the $\tau$-graduated objective function $\cL_\tau$ is locally strongly convex and strongly smooth in the ball $\cB(\vwo,r)$ with the parameters $\lambda_r = \tau^2(\exp(-\tau\cdot m_r) - \tau\cdot r)$ and $\Lambda_r = \tau^2(\exp(-\tau\cdot m_r) + \tau\cdot r)$ where $m_r = \norm\vwo_2 + r$. Now clearly $\Lambda_r > 0$ for all $\tau \in [0,1]$ but to ensure $\lambda_r > 0$, we need to set $\tau$ to be small enough. Without loss of generality, we assume $R \geq 1$ and promise to always set $\tau \in [0,1]$. As $\frac v3\exp\br{\frac v3} \leq \frac v2$ for all $v \in [0,1]$, setting $\tau \leq \min\bc{\frac{\exp(-R)}{3r},1}$ ensures $\lambda \geq \tau^2\cdot\exp(-\tau R)/2$. For this value of $\tau$, we also get $\Lambda \leq \tau^2\cdot\exp(-\tau R) + \tau^3 R$ finishing the proof.
\end{proof}
Although the above calculations do not take into account the universal constants omitted in Table~\ref{tab:elsc-elss}, doing so does not change the form of the result. Similar calculations can be done for the other activation functions in Table~\ref{tab:elsc-elss} to arrive at feasible settings for temperature $\tau_r$ given a neighborhood radius $r > 0$.

\noindent\textbf{Summary of \alg.} Standard results from optimization theory \cite{BoydVandenberghe2004,BottouCN2018} tell us that performing descent e.g. GD/SGD on objectives that are strongly convex and smooth offers a linear rate of convergence to the global optimum. The above result allows us to extend the basin of local strong convexity to arbitrarily large regions $r > 0$ simply by using a sufficiently small temperature e.g. $\tau_r \leq \bigO{\frac1r}$ for sigmoid activation. Thus, it is tempting to initialize arbitrarily, set $\tau_0$ to an extremely small value so that the initial model falls in the basin of local strong convexity, and proceed with a descent strategy of choice. However, this method may offer very slow convergence owing to the extremely small strong convexity parameter $\lambda_r = \bigO{\tau_r^2}$. The following discussion shows how we can bootstrap into using larger and larger values of $\tau$ to get more rapid convergence.

\noindent\textbf{Bootstrapping to larger temperatures.} Given data $\bc{(\vx_i,y_i}_{i=1}^n$, we initialize our model at any location $\vw_0$ such that $\norm{\vw_0}_2 \leq R$ (recall that $R \geq \norm\vwo_2$) or else initialize at the origin i.e. $\vw_0 = \vzero$. In either case we are assured that $\norm{\vw_0 - \vwo}_2 \leq 2R$. Since $\vw_0$ is likely to be too far away from $\vwo$ to reside in the local basin of strong convexity, we set the initial temperature to be low $\tau_0 \leq \exp(-R)/3R$ so that ELSC can be obtained via Lemma~\ref{lem:sigmoid-elsc} and $\vw_0$ is assured to fall in the basin of strong convexity surrounding $\vwo$. We now perform a few descent steps using (stochastic) gradient descent after which standard results \cite{BoydVandenberghe2004,BottouCN2018} ensure that we would have obtained a model, say $\vw_1$, such that $\norm{\vw_1 - \vwo}_2 \leq \frac1\beta\cdot\norm{\vw_0 - \vwo}_2$ for some $\beta > 1$. This implies that we have slipped closer to $\vwo$ and can afford a larger temperature say upto $\tau_1 \leq \beta\cdot\tau_0$. This allows a larger strong convexity parameter and hence faster convergence. Continuing this way allows us converge to $\vwo$ at a linear rate as established in Section~\ref{sec:analysis}. The resulting algorithm is described in Algorithm~\ref{algo:main}.

\noindent\textbf{Hyperparameters.} The initial temperature $\tau^0$, the rate $\beta$ at which we elevate the temperature, as well as the step length parameter $\eta$ for the descent technique used (GD/SGD) are the three hyperparamters for \alg and are tuned using standard cross validation and a grid search. Figure~\ref{fig:sensitivity} establishes that \alg is not sensitive misspecification of these hyperparameters.

\noindent\textbf{\alg Variants.} Depending on the algorithm used to perform descent, \alg can be adapted to various settings. For instance, full-batch gradient descent could be used to offer stable convergence at a linear rate \cite{BoydVandenberghe2004} but may be algorithmically more expensive. However, as \alg maintains the invariant that the current model always lies in the basin of local strong convexity surrounding the gold model, other options e.g. stochastic gradient descent using a single data point per iteration or else minibatch versions could be used. Standard results \cite{BottouCN2018} assure a linear rate of convergence in expectation as well as in probability for this option as well. This offers \alg an advantage over existing works such as \cite{HazanLS-S2015} which necessarily require large mini-batch sizes failing which they are unable to assure convergence. Variance-reduced versions using techniques such as SVRG \cite{JohnsonZhang2013} or SAGA \cite{DefazioBL-J2014}, or momentum and adaptive gradient methods \cite{KingmaBa2015,ReddiZSKK2018} could also be used and offer interesting directions for future work.

\section{Theoretical Guarantees}
\label{sec:analysis}

All detailed proofs are placed in the \suppcite. As before, we present arguments with the sigmoid activation as an example which similarly extend to the other activation functions as well. We will first establish the convergence behavior for the noiseless case i.e. when $y_i = \phi(\vx_i^\top\vwo)$ and then extend the analysis to the noisy case under two settings: pre- and post-activation noise.

\subsection{Noiseless Labels}
\label{sec:idealized-conv}

Theorem~\ref{thm:conv-main-sigmoid} establishes the convergence rate of \alg with explicit constants for sigmoid activation when used with GD steps.

\begin{theorem}
\label{thm:conv-main-sigmoid}
Suppose Algorithm~\ref{algo:main} is executed with GD for $T$ steps with step length at time $t \leq T$ set to any value $\eta_t \leq \frac1{\Lambda_{r_t}}$ and the temperature increment parameter set to any value $\beta_t \leq \exp\br{\frac{\lambda_{r_t}}{\Lambda_{r_t}}}$ where $\lambda_{r_t}, \Lambda_{r_t}$ are the ELSC/ELSS parameter corresponding to the $\tau_t$-graduated objective, then \alg offers a linear rate of convergence  with $\norm{\vw_T - \vwo}_2^2 \leq \prod_{t=0}^T\exp\br{-\frac{\lambda_{r_t}}{\Lambda_{r_t}}}\cdot\norm{\vw_0-\vwo}_2^2$. In particular, for the sigmoid activation, we may use any step length $\eta_t \leq \frac1{\tau_t^2(1+R)}$ and a constant temperature increment rate $\beta_t \equiv \beta \leq \exp\br{\frac{\exp(-R)}{1+R}}$ (note that this still allows values of $\beta > 1$). This offers $\norm{\vw_T - \vwo}_2^2 \leq \exp(-\zeta\cdot T)\cdot\norm{\vw_0-\vwo}_2^2$ where $\zeta = \ln\beta = \frac{\exp(-R)}{1+R} > 0$.
\end{theorem}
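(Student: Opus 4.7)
The plan is to proceed by induction on $t$, maintaining the invariant that $\vw_t \in \cB(\vwo, r_t)$ for a coupled sequence of radii $r_t$ and temperatures $\tau_t$ chosen so that Lemma~\ref{lem:sigmoid-elsc} certifies ELSC/ELSS of $\cL_{\tau_t}$ within $\cB(\vwo, r_t)$. The base case is immediate from the suggested initialization $\vw_0 \leftarrow \vzero$: set $r_0 \leftarrow 2R$ so that $\norm{\vw_0 - \vwo}_2 \leq r_0$, and pick $\tau_0$ below the ceiling $\exp(-R)/(3r_0)$ from Lemma~\ref{lem:sigmoid-elsc}.

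For the inductive step, the first observation is that in the noiseless setting $\vwo$ is a stationary point of every graduated objective $\cL_{\tau_t}$: since $y_i^{\tau_t} = \phi_{\tau_t}(\phi^{-1}(y_i)) = \phi_{\tau_t}(\vx_i^\top \vwo)$, direct differentiation gives $\nabla \cL_{\tau_t}(\vwo) = \vzero$, and by ELSC it is the unique minimizer inside $\cB(\vwo, r_t)$. Under the inductive hypothesis, the standard one-step analysis of GD with $\eta_t \leq 1/\Lambda_{r_t}$ on an $\lambda_{r_t}$-strongly convex and $\Lambda_{r_t}$-smooth objective \cite{BoydVandenberghe2004,BottouCN2018} then yields $\norm{\vw_{t+1} - \vwo}_2^2 \leq (1 - \lambda_{r_t}/\Lambda_{r_t})\norm{\vw_t - \vwo}_2^2 \leq \exp(-\lambda_{r_t}/\Lambda_{r_t}) \norm{\vw_t - \vwo}_2^2$. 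Telescoping across $t = 0, \ldots, T-1$ immediately gives the product bound claimed in the theorem.

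To close the induction, $r_{t+1}$ must be chosen so that (a) $\vw_{t+1} \in \cB(\vwo, r_{t+1})$ is consistent with the per-step contraction above, and (b) the updated temperature $\tau_{t+1} = \beta_t \tau_t$ still respects the Lemma~\ref{lem:sigmoid-elsc} ceiling $\tau_{t+1} \leq \exp(-R)/(3r_{t+1})$. Since the GD contraction shrinks the distance-to-optimum geometrically, setting $r_{t+1}$ to track the new distance and shrinking it by the same factor by which $\tau_{t+1}$ grows shows that the condition $\beta_t \leq \exp(\lambda_{r_t}/\Lambda_{r_t})$ is precisely what lets temperature and radius move in lockstep. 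For the sigmoid-specific constants, substituting $\lambda_{r_t} \geq \tau_t^2 \exp(-R)/2$ and $\Lambda_{r_t} \leq \tau_t^2(1+R)$ from Lemma~\ref{lem:sigmoid-elsc} justifies the stated step length $\eta_t \leq 1/(\tau_t^2(1+R))$ and yields the ratio $\lambda_{r_t}/\Lambda_{r_t} \geq \exp(-R)/(1+R)$; writing $\zeta = \exp(-R)/(1+R)$ and taking a constant $\beta \leq \exp(\zeta)$ collapses the telescoped product to $\exp(-\zeta T)\norm{\vw_0 - \vwo}_2^2$.

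The hardest part of carrying this plan out is the coupled bookkeeping around the radius and temperature schedules: one has to verify at every iteration that the updated pair $(r_{t+1}, \tau_{t+1})$ simultaneously contains the new iterate and satisfies Lemma~\ref{lem:sigmoid-elsc}'s temperature ceiling, because the ELSC/ELSS constants (and hence the per-step contraction rate) depend on this very choice. Once this invariant is in place, the rest reduces to a routine textbook-style GD convergence argument, applied at each iteration to a different (strongly convex) graduated objective.
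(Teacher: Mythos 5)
Your proposal is correct and follows essentially the same route as the paper: maintain the invariant that $\vw_t$ lies in a ball where the $\tau_t$-graduated objective is ELSC/ELSS, obtain the per-step contraction $\norm{\vwn-\vwo}_2^2 \leq \exp(-\lambda_{r_t}/\Lambda_{r_t})\norm{\vwt-\vwo}_2^2$ from strong convexity/smoothness (the paper derives this explicitly from the two ELSC/ELSS inequalities together with $\cL_{\tau_t}(\vwo)=0$, whereas you invoke the textbook GD contraction as a black box, which is an immaterial difference), couple the temperature increment $\beta_t \leq \exp(\lambda_{r_t}/\Lambda_{r_t})$ to the radius shrinkage, telescope, and specialize to the sigmoid constants from Lemma~\ref{lem:sigmoid-elsc}.
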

\begin{proof}
Suppose we are given an iterate $t$ with $\norm{\vwt -\vwo}_2 \leq r_t$ and $\tau_t$ was set according to $r_t$ as described in Lemma~\ref{lem:sigmoid-elsc} such that $\cL_{\tau_t}$ is $\lambda_{r_t}$-strongly convex and $\Lambda_{r_t}$-strongly smooth in the region $\cB(\vwo,r_t)$. Note that this is guaranteed in the base case i.e. $t = 0$ by initialization and hyperparameter tuning. Now the ELSS guarantee from Lemma~\ref{lem:sigmoid-elsc} tells us that
\[
\cL_{\tau_t}(\vwn) - \cL_{\tau_t}(\vwt) \leq \ip{\nabla_\vw\cL_{\tau_t}(\vwt)}{\vwn-\vwt} + \frac{\Lambda_{r_t}}2\norm{\vwn-\vwt}_2^2
\]
Using the fact that $\vwn = \vwt - \eta_t\cdot\nabla_\vw\cL_{\tau_t}(\vwt)$ and we set $\eta_t = \frac1{\Lambda_{r_t}}$ gives us upon some manipulations,
\[
	\cL_{\tau_t}(\vwn) - \cL_{\tau_t}(\vwt) \leq \ip{\nabla_\vw\cL_{\tau_t}(\vwt)}{\vwo-\vwt} +\frac{\Lambda_{r_t}}2\br{\norm{\vwt-\vwo}_2^2-\norm{\vwn-\vwo}_2^2}
\]
On the other hand, the ELSC guarantee from Lemma~\ref{lem:sigmoid-elsc} tells us
\[
\cL_{\tau_t}(\vwo) - \cL_{\tau_t}(\vwt) \geq \ip{\nabla_\vw\cL_{\tau_t}(\vwt)}{\vwo-\vwt} + \frac{\lambda_{r_t}}2\norm{\vwo-\vwt}_2^2
\]
Combining and using the fact that $0 = \cL_{\tau_t}(\vwo) \leq \cL_{\tau_t}(\vwn)$ (in the noiseless setting $\cL_{\tau}(\vwo) = 0$ for all $\tau \in (0,1]$) gives us
\[
\frac{\Lambda_{r_t}}2\cdot\norm{\vwn-\vwo}_2^2 \leq \frac{\Lambda_{r_t} - \lambda_{r_t}}2\cdot\norm{\vwt-\vwo}_2^2,
\]
which can be rewritten using $1 - x \leq \exp(-x)$ as
\[
\norm{\vwn-\vwo}_2^2 \leq \exp\br{-\frac{\lambda_{r_t}}{\Lambda_{r_t}}}\cdot\norm{\vwn-\vwo}_2^2,
\]
that gives us a linear rate of convergence and also allows us to set $\beta_t = \exp\br{\frac{\lambda_{r_t}}{\Lambda_{r_t}}}$ as the temperature increment rate. Note that this always allows values of $\beta_t > 1$ to be set since $\exp(v) \geq 1 + v$ and thus $\exp\br{\frac{\lambda_{r_t}}{\Lambda_{r_t}}} \geq 1 + {\frac{\lambda_{r_t}}{\Lambda_{r_t}}} > 1$. Although the above proof uses the precise step length setting $\eta_t = \frac1{\Lambda_{r_t}}$, the proof readily extends \cite{BoydVandenberghe2004} to cases where we instead set $\eta_t \leq \frac1{\Lambda_{r_t}}$ but the arguments get a bit more involved. For the special case of the sigmoid activation, using the ELSC/ELSS constants from Lemma~\ref{tab:elsc-elss} gives us the desired parameters $\eta_t \leq \frac1{\tau^2_t(1+R)}$ and $\beta \leq \exp\br{\frac{\exp(-R)}{1+R}}$. Since $R$ is typically a small constant, the above result offers a good rate of linear convergence in practice (see Section~\ref{sec:exps}).
\end{proof}

Using standard proof techniques for instance \cite[Theorem 4.6]{BottouCN2018}, we may also derive a similar linear convergence guarantee for variants of \alg that use (mini-batch) stochastic gradient steps instead. We provide the details in the \suppciteshort.

\subsection{Noisy Labels}
\label{sec:noisy-conv}

There exist two distinct regimes of label noise namely \emph{pre}-activation noise, where we observe $y_i = \phi(\vx_i^\top\vwo + \epsilon_i)$ and \emph{post}-activation noise where we observe $y_i = \phi(\vx_i^\top\vwo) + \epsilon_i$ where $\epsilon_i$ is the label noise. These are often referred to as corruption of the canonical parameter and corruption of the response, respectively \cite{YangTR2013}.

\noindent\textbf{Pre-activation Noise.} For pre-activation noise, our analysis can handle arbitrary sub-Gaussian noise distributions. Note that sub-Gaussian distributions are a fairly large family of distributions and include all Gaussian distributions as well as all distributions with bounded support. A gentle introduction to sub-Gaussian distributions is presented in the \suppciteshort.

\noindent\textbf{Post-activation Noise.} Addressing post-activation noise takes more care since corruption can lead to illegal labels. For example, even simple Gaussian noise applied post-activation may cause the response $y_i$ to be negative or greater than unity whereas the sigmoid activation has range restricted to the interval $[0,1]$. We avoid this by considering noise distributions with restricted support for post-activation settings. Alternatives could include exploring multiplicative noise models instead of additive models. We note that such restrictions are not required for pre-activation noise.

Establishing the ELSS property is relatively straightforward even in the presence of pre-/post-activation noise but establishing ELSC requires more care otherwise we may not obtain strictly positive values of $\lambda_r$. The \suppciteshort discuss how imposing a non-trivial cap on the temperature of the form $\tau_{\max} = \bigO{\frac1\varsigma}$ ensures that ELSC can still be established where $\varsigma$ depending on the noise distribution is either its standard deviation or width of its support or more-generally, its sub-Gaussian constant. A subsequent analysis then shows that despite capping the temperature early, a model $\hvw$ with bounded error $\norm{\hvw - \vwo}_2 \leq \bigO\varsigma$ can still be obtained. Theorem~\ref{thm:conv-noisy-sigmoid} presents an informal statement of the recovery offered by \alg in noisy setting for sigmoid activation. The detailed theorem statement and proof are presented in the \suppciteshort.

\begin{theorem}[Informal]
\label{thm:conv-noisy-sigmoid}
Suppose label noise is pre-activation and $\varsigma$-sub-Gaussian, or else post-activation with bounded support $[-\varsigma,\varsigma]$. Let Algorithm~\ref{algo:main} be executed as given in the statement of Theorem~\ref{thm:conv-main-sigmoid} except with a temperature cap i.e. the temperature is never allowed to exceed $\tau_{\max} = \bigO{\frac1\varsigma}$ where $\varsigma$ is the variance parameter associated with the distribution. Then in $T = \bigO{\ln\frac1\varsigma}$ steps, \alg offers a model $\vw_T$ with model recovery error $\norm{\vw_T - \vwo}_2 \leq \bigO\varsigma$.
\end{theorem}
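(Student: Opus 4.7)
The plan is to mimic the noiseless argument of Theorem~\ref{thm:conv-main-sigmoid} but with two crucial modifications: (i) extend the ELSC/ELSS guarantees of Lemma~\ref{lem:sigmoid-elsc} to the noisy graduated objective, which forces a cap on the temperature, and (ii) track an additive ``noise floor'' in the contraction recurrence that arises because $\nabla\cL_{\tau_t}(\vwo)$ is no longer zero in the noisy setting. The argument will stop contracting once $\norm{\vwt-\vwo}_2$ reaches the noise floor, which will be $\bigO\varsigma$, matching the stated recovery guarantee.

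\textbf{Step 1: Noisy ELSC/ELSS with a temperature cap.} First I would decompose the noisy graduated objective $\cL_{\tau}^{\text{noisy}}(\vw) = \cL_{\tau}(\vw) + \mathrm{noise\ perturbation}(\vw)$, where $\cL_{\tau}$ is the noiseless graduated objective and the perturbation is built from the residuals $y_i^\tau - \phi_\tau(\vx_i^\top\vwo)$. Using the noiseless Lemma~\ref{lem:elsc-elss}/Table~\ref{tab:elsc-elss} and standard sub-Gaussian concentration (for pre-activation noise) or boundedness (for post-activation noise), I would bound the Hessian perturbation uniformly over $\cB(\vwo,r)$ by a term proportional to $\tau\cdot\varsigma$ times sample-dependent quantities that concentrate around constants for $n = \Om{d\log d}$. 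Forcing this Hessian perturbation to be at most half of $\lambda_r$ (from the noiseless case, which is $\Om{\tau^2 \exp(-R)}$) yields the condition $\tau \le \taumax = \bigO{1/\varsigma}$, giving an effective $\lambda_{r_t}'\ge \lambda_{r_t}/2$ and a comparable $\Lambda_{r_t}'$.

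\textbf{Step 2: Contraction with a noise floor.} Next I would re-run the ELSS-plus-ELSC sandwich from the proof of Theorem~\ref{thm:conv-main-sigmoid}, but replace the identity $\cL_{\tau_t}(\vwo)=0$ by the bound $\cL_{\tau_t}^{\text{noisy}}(\vwo) \le \bigO{\tau_t^2\varsigma^2}$ and replace $\nabla\cL_{\tau_t}(\vwo)=0$ by $\|\nabla\cL_{\tau_t}^{\text{noisy}}(\vwo)\|_2 \le \bigO{\tau_t^2\varsigma}$, both coming from independent sub-Gaussian/bounded concentration of the residuals scaled by $\phi_\tau'$ (which is $\bigO\tau$ for sigmoid). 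Carrying these extra terms through the algebra that in the noiseless proof produced $\frac{\Lambda_{r_t}}2\|\vwn-\vwo\|_2^2 \le \frac{\Lambda_{r_t}-\lambda_{r_t}}2\|\vwt-\vwo\|_2^2$ now gives
\begin{equation*}
\norm{\vwn-\vwo}_2^2 \leq \exp\br{-\frac{\lambda_{r_t}'}{\Lambda_{r_t}'}}\cdot\norm{\vwt-\vwo}_2^2 + \bigO{\varsigma^2},
\end{equation*}
after absorbing the $\tau_t\le\taumax = \bigO{1/\varsigma}$ factors. The temperature update rule $\tau_{t+1}\le \beta_t\tau_t$ (now additionally capped at $\taumax$) is chosen exactly as before so that $\vwn$ stays inside the shrunken $\cB(\vwo,r_{t+1})$ of the next iterate, maintaining the ELSC/ELSS invariant.

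\textbf{Step 3: Iteration count and the main obstacle.} Unrolling the recurrence, the homogeneous part contracts geometrically with rate $\zeta = \exp(-R)/(1+R)$ exactly as in Theorem~\ref{thm:conv-main-sigmoid}, while the additive $\bigO{\varsigma^2}$ terms sum to a geometric series bounded by $\bigO{\varsigma^2}$. Starting from $\|\vw_0-\vwo\|_2\le 2R$ and running until the homogeneous part drops below $\bigO{\varsigma^2}$ requires $T = \bigO{\log(R/\varsigma)} = \bigO{\log(1/\varsigma)}$ iterations, after which $\|\vw_T-\vwo\|_2 \le \bigO\varsigma$. The main obstacle is Step~1: getting a \emph{uniform} (over $\vw\in\cB(\vwo,r)$) bound on the noisy Hessian perturbation that shows strong convexity survives, and getting it with constants tight enough in $\tau$ that $\taumax = \bigO{1/\varsigma}$ (rather than something weaker like $\bigO{1/\sqrt\varsigma}$) suffices --- since a looser cap would degrade both the final error and the iteration count. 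The post-activation regime needs the extra care alluded to in the text because label clipping/truncation effects must be folded into the concentration bound, which is why the theorem restricts to bounded-support post-activation noise.
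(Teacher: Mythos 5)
Your proposal is correct, and Step 1 (establishing noisy ELSC/ELSS by bounding the Hessian perturbation by a term of relative size $\tau\varsigma$ and forcing it below half the noiseless $\lambda_r$, which yields $\taumax = \bigO{1/\varsigma}$) is essentially identical to what the paper does in Appendices~\ref{app:elss-noisy} and \ref{app:elsc-noisy}. Where you genuinely diverge is in Steps 2--3. The paper uses a two-phase argument: it lets the temperature hit the cap in $\bigO1$ increments, then treats the remaining iterations as plain GD on the \emph{fixed} objective $\cL_\taumax$, converging linearly to its minimizer $\tvw$, and finally bounds the bias $\norm{\tvw-\vwo}_2 \leq \frac1\lambda\norm{\nabla\cL_\taumax(\vwo)}_2 = \bigO\varsigma$ by a single strong-convexity argument at the cap temperature plus the residual bound $(Q) \leq 2n\tau^2\varsigma^2$. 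You instead carry an additive noise floor through every iteration, obtaining $\norm{\vwn-\vwo}_2^2 \leq \exp(-\lambda_{r_t}/\Lambda_{r_t})\norm{\vwt-\vwo}_2^2 + \bigO{\varsigma^2}$ and unrolling the geometric series; the ingredients you need ($\cL_{\tau_t}(\vwo) \leq \bigO{\tau_t^2\varsigma^2}$, or equivalently $\norm{\nabla\cL_{\tau_t}(\vwo)}_2 \leq \bigO{\tau_t^2\varsigma}$, divided through by $\Lambda_{r_t} = \Theta(\tau_t^2)$) are exactly the residual/gradient bounds the paper proves, just invoked at every $\tau_t$ rather than only at $\taumax$. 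Your route is more uniform and makes the $T = \bigO{\ln\frac1\varsigma}$ count transparent; the paper's route is cleaner in that the ELSC/ELSS invariant with temperature increments only needs to be maintained while the radius is well above $\varsigma$. The one refinement you should make explicit is the handover point: with the additive $\bigO{\varsigma^2}$ term present, the certified shrinkage $r_{t+1} = r_t/\beta$ only holds while $r_t \geq \Omega(\varsigma)$ (requiring $\beta$ strictly below $\exp(\lambda/\Lambda)$ to absorb the floor), and you must check that this is precisely the regime in which the temperature cap $\taumax = \bigO{1/\varsigma}$ has not yet bound, since $\tau_{r} \approx \exp(-R)/(3r)$ reaches $\bigO{1/\varsigma}$ exactly when $r$ reaches $\bigO\varsigma$. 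This consistency check succeeds, so it is a detail rather than a gap.
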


\noindent\textbf{Consistent Recovery.} An interesting special case arises if the noise variance parameter $\varsigma$ is small enough so that $\tau_{\max} \geq 1$ i.e. the temperature cap is a vacuous one. In this case, \alg guarantees consistent recovery for post-activation noise i.e. $\norm{\hvw - \vwo}_2 \rightarrow 0$ as the number of data points increases as $n \rightarrow \infty$.

\begin{theorem}[Informal]
\label{thm:consistent-post-noisy-sigmoid}
Suppose label noise is post-activation with bounded support $[-\varsigma,\varsigma]$ with $\varsigma$ sufficiently small so that the temperature cap is vacuous i.e. $\tau_{\max} \geq 1$. Let Algorithm~\ref{algo:main} be executed as given in the statement of Theorem~\ref{thm:conv-main-sigmoid} i.e. with no additional temperature caps. Then for any $\epsilon > 0$, in $T = \bigO{\ln\frac1\epsilon}$ steps, \alg offers a model $\vw_T$ with model recovery error $\norm{\vw_T - \vwo}_2 \leq \bigO{\varsigma\sqrt\frac dn + \epsilon}$.
\end{theorem}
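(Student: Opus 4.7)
Since the temperature cap is vacuous ($\tau_{\max} \geq 1$), Algorithm~\ref{algo:main} runs exactly as in Theorem~\ref{thm:conv-main-sigmoid} and its late stages operate on the un-graduated objective $\cL \equiv \cL_1$. The plan is to decompose $\norm{\vw_T - \vwo}_2$ via the triangle inequality into an optimization term $\norm{\vw_T - \hvw}_2$ and a statistical bias $\norm{\hvw - \vwo}_2$, where $\hvw \deff \argmin_{\vw \in \cB(\vwo,r)}\cL(\vw)$ for the basin radius $r$ from Lemma~\ref{lem:sigmoid-elsc}, and bound each term separately.

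\emph{Statistical bias.} Under post-activation noise, $\vwo$ is no longer a minimizer of $\cL$. Applying the strong-convexity inequality of Lemma~\ref{lem:sigmoid-elsc} at $\vu = \vwo$, $\vv = \hvw$ together with $\cL(\hvw) \leq \cL(\vwo)$ gives the standard M-estimator bound $\norm{\hvw - \vwo}_2 \leq 2\norm{\nabla\cL(\vwo)}_2 / \lambda_r$. The gradient at $\vwo$ reduces to $-\frac{2}{n}\sum_i \epsilon_i\,\phi'(\vx_i^\top\vwo)\,\vx_i$, a sum of independent mean-zero $\bigO\varsigma$-sub-Gaussian random vectors (using $|\epsilon_i|\leq \varsigma$ and $\phi'$ uniformly bounded). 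Standard sub-Gaussian vector concentration with standard-normal $\vx_i$ yields $\norm{\nabla\cL(\vwo)}_2 \leq \bigO{\varsigma\sqrt{d/n}}$ with probability at least $1 - \exp\br{-\Om d}$. Since $\lambda_r = \Om{\exp(-R)}$ is an $n$-independent constant, we obtain $\norm{\hvw - \vwo}_2 \leq \bigO{\varsigma\sqrt{d/n}}$.

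\emph{Optimization error.} The per-stage contraction argument of Theorem~\ref{thm:conv-main-sigmoid} adapts with a single modification: replace the identity $\cL_{\tau_t}(\vwo) = 0$, which fails in the noisy setting, by tracking distance to the stagewise minimizer $\hvw_{\tau_t} \deff \argmin_{\vw \in \cB(\vwo, r_t)}\cL_{\tau_t}(\vw)$ and using $\cL_{\tau_t}(\hvw_{\tau_t}) \leq \cL_{\tau_t}(\vwn)$ in place of $0 \leq \cL_{\tau_t}(\vwn)$. The same ELSC/ELSS constants and step lengths still deliver the per-step contraction $\exp\br{-\lambda_{r_t}/\Lambda_{r_t}}$, so $T = \bigO{\ln(1/\epsilon)}$ stages suffice to drive $\norm{\vw_T - \hvw}_2 \leq \epsilon$ once $\tau_t$ has reached $1$. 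Combining with the statistical-bias bound via the triangle inequality yields the advertised $\bigO{\varsigma\sqrt{d/n} + \epsilon}$.

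\emph{Main obstacle.} The principal difficulty I anticipate is maintaining the basin invariant $\vwt \in \cB(\vwo, r_t)$ across the whole schedule. In the noisy regime the iterate is pulled toward $\hvw_{\tau_t}$ rather than $\vwo$, producing a statistical floor $\norm{\hvw_{\tau_t} - \vwo}_2 = \bigO{\varsigma\sqrt{d/n}}$ that the contraction radius cannot cross. Applying the statistical-bias calculation at each intermediate $\tau_t$ bounds this floor uniformly, and provided the temperature schedule is chosen so that the planned $r_{t+1}$ never falls below this floor (but otherwise decays geometrically as in Theorem~\ref{thm:conv-main-sigmoid}), the invariant is preserved. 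A union bound over the $\bigO{\ln(1/\epsilon)}$ stages inflates the failure probability only logarithmically, leaving the high-probability guarantee intact.
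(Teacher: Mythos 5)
Your proposal is correct and follows essentially the same route as the paper's Appendix C.5. Both arguments hinge on the same two pillars: (i) since $\tau_{\max}=1$, the graduated label operation is the identity, so $\nabla\cL_1(\vwo)$ reduces to a sum of the form $-\frac{2}{n}\sum_i\epsilon_i\,\phi'(\vx_i^\top\vwo)\,\vx_i$, which is mean-zero precisely because the post-activation noise $\epsilon_i$ is unbiased and untouched by any nonlinearity, and Bernstein-plus-net concentration yields $\|\nabla\cL(\vwo)\|_2 \leq \bigO{\varsigma\sqrt{d/n}}$; (ii) a strong-convexity / M-estimator bound converts this to $\|\hvw-\vwo\|_2 \leq \bigO{\frac1{\lambda_r}\|\nabla\cL(\vwo)\|_2}$, and GD's linear rate on the strongly convex basin closes to within $\epsilon$ of $\hvw$ in $\bigO{\ln(1/\epsilon)}$ steps. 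The only differences are cosmetic: you obtain the M-estimator bound via one strong-convexity inequality plus $\cL(\hvw)\leq\cL(\vwo)$ (yielding a factor 2), whereas the paper applies strong convexity twice and cancels to save the factor, and you flag the basin-invariant maintenance explicitly as a point of care, which the paper treats more implicitly by appealing to Theorem~\ref{thm:conv-noisy-sigmoid}'s machinery to reach $\tau=1$ in $\bigO1$ stages. Neither difference changes the argument or the rate.
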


Note that as $n \rightarrow \infty$, \alg offers $\norm{\hvw- \vwo}_2^2 \rightarrow 0$ error i.e. \emph{consistent} recovery. Figure~\ref{fig:consistency} shows that with low-variance noise, \alg does offer consistent recovery under low-variance post-activation noise in experiments too. However, what is curious is that experiments show that this happens for pre-activation noise as well. Theorem~\ref{thm:consistent-post-noisy-sigmoid} is unable to assure consistent recovery for pre-activation noise which seems reasonable as even though the pre-activation noise $\epsilon_i$ may be unbiased, the non-linear activation function causes the label to appear biased since $\Ee{\epsilon}{\sigma(a + \epsilon)} \neq \sigma(a)$. Investigating why \alg still seems to offer consistent recovery in experiments in pre-activation settings is an interesting direction of future work.

\begin{figure*}[t]
	\centering
	\begin{tabular}{@{\hskip 0.02\textwidth}c@{\hskip 0.02\textwidth}c@{\hskip 0.02\textwidth}c@{\hskip 0.02\textwidth}}
		\includegraphics[width=0.3\textwidth]{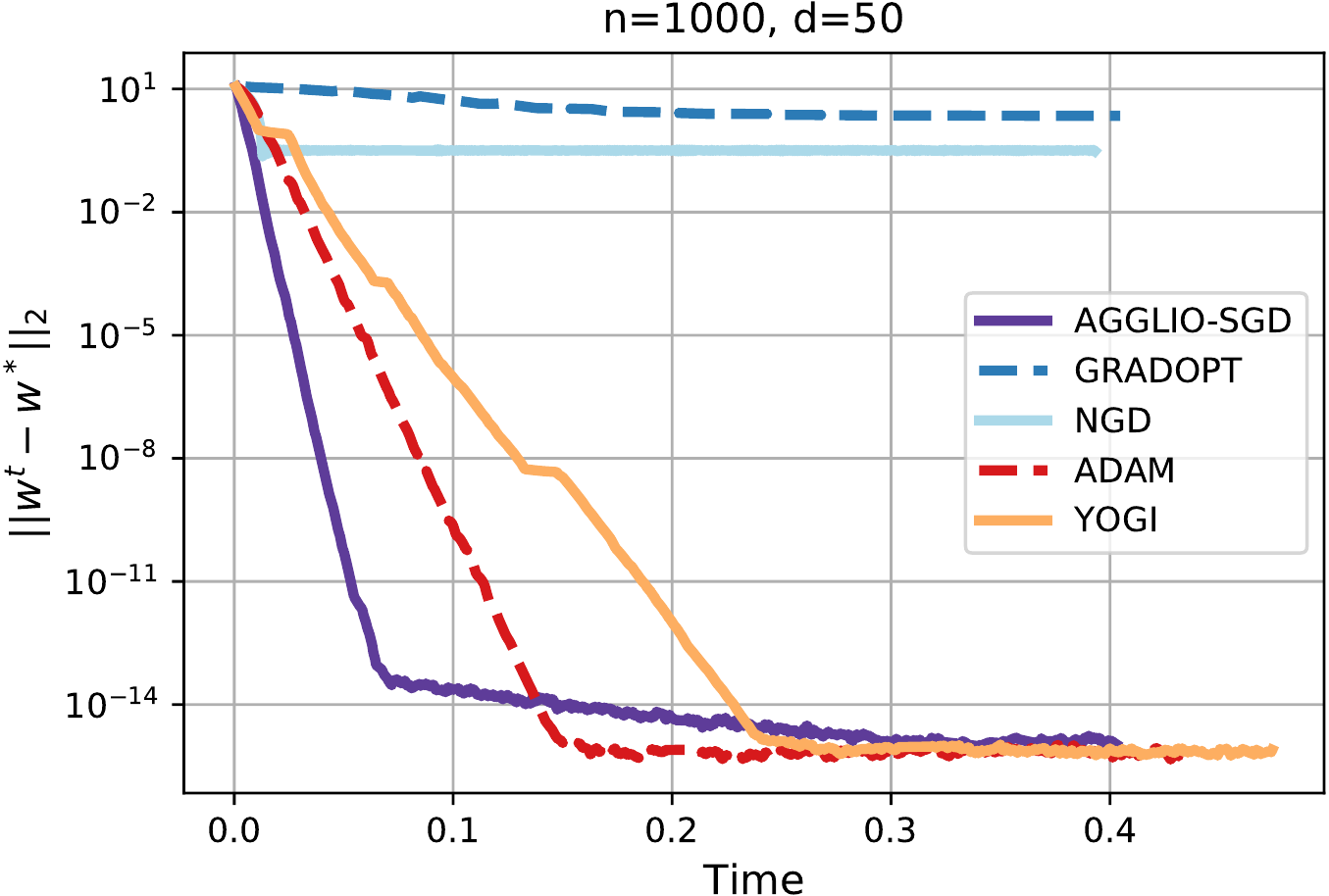} & \includegraphics[width=0.3\textwidth]{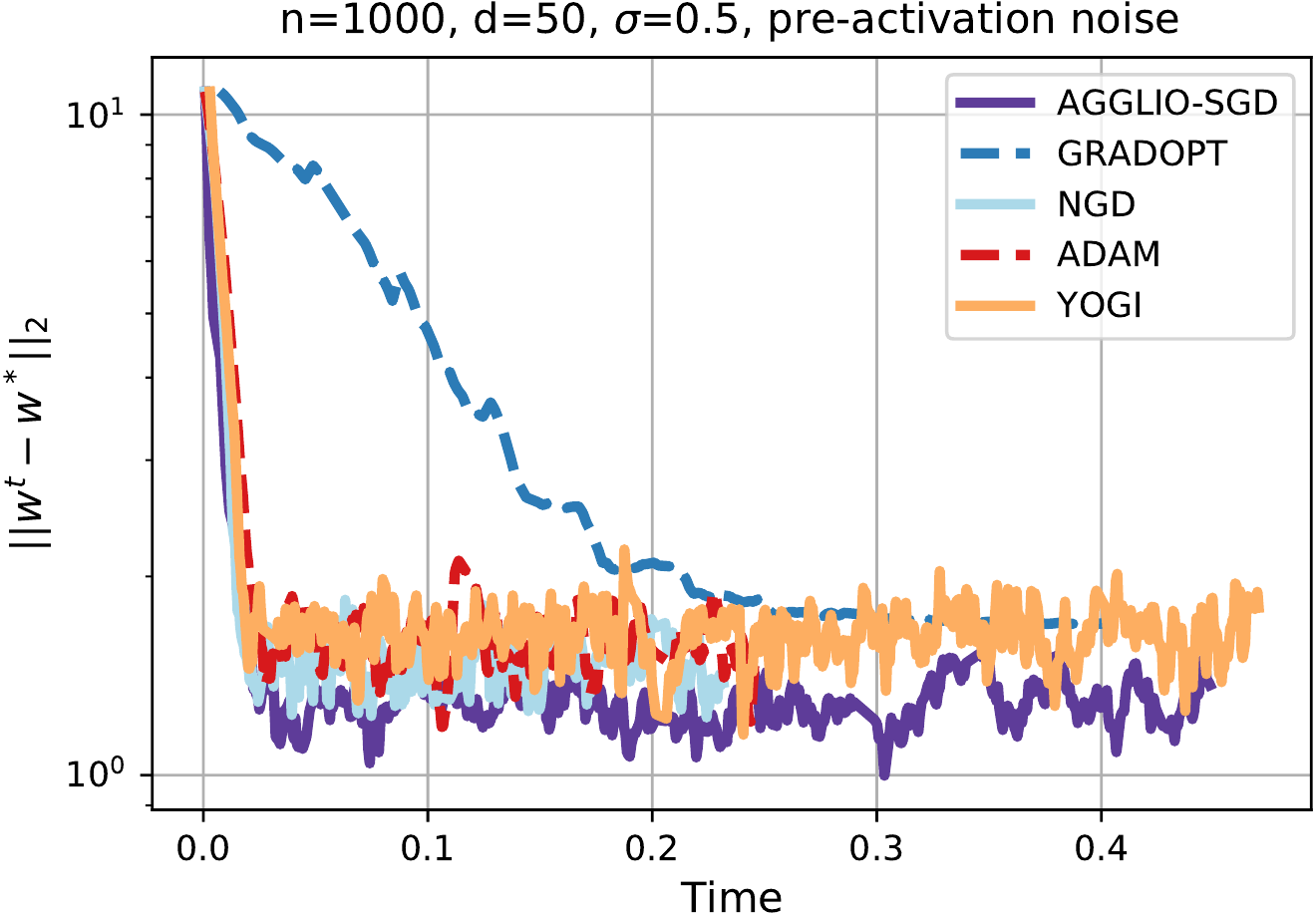} & 
		\includegraphics[width=0.3\textwidth]{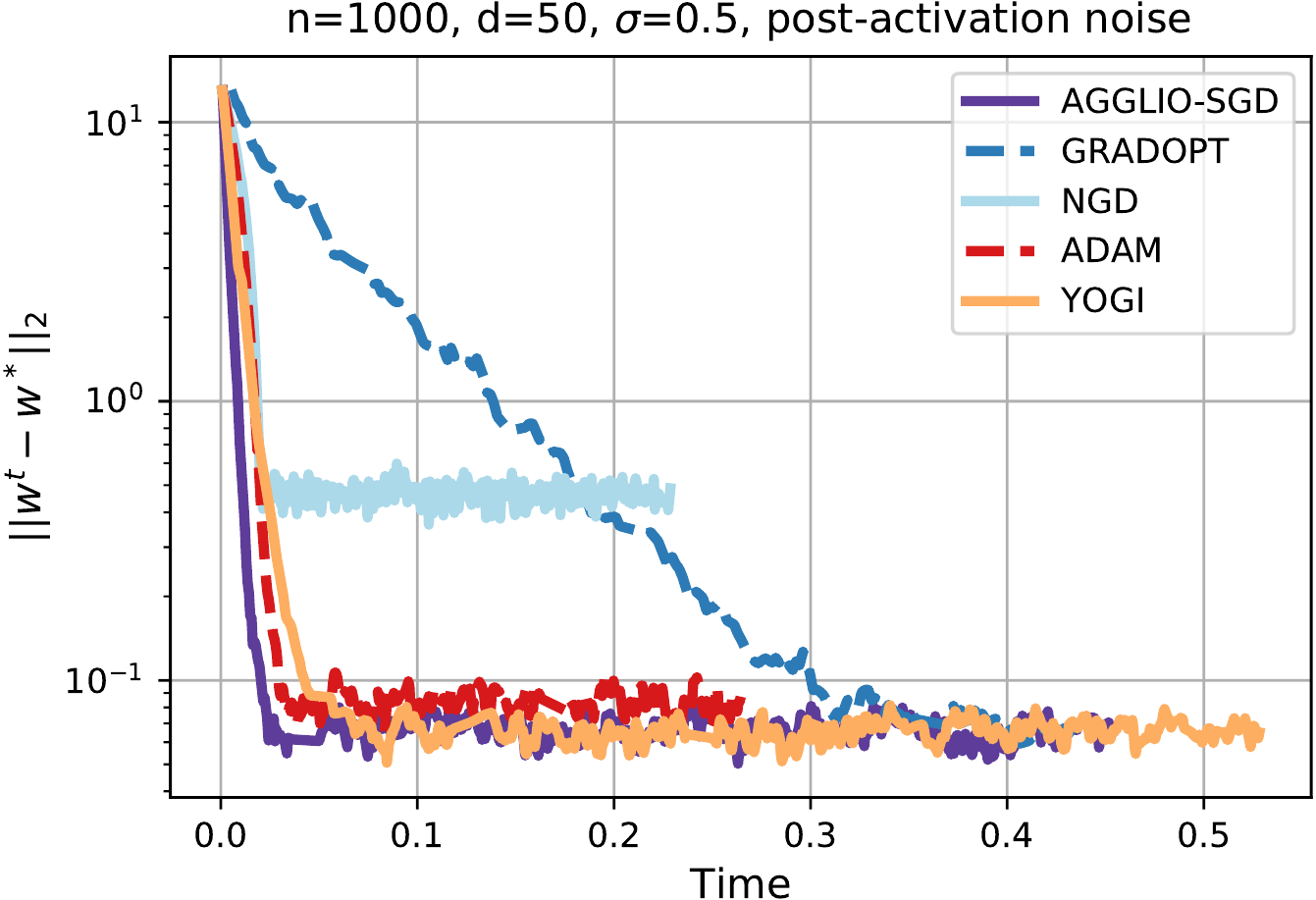}\\
		\small (a) noiseless & \small (b) pre-activation noise & \small (c) post-activation noise
	\end{tabular}
	\caption{Convergence behavior of \alg and various competitors on a learning task with sigmoid activation in (a) noiseless, (b) pre-activation (c) and post-activation noise settings. The number of data points $n$, dimensions $d$, and noise standard deviation $\varsigma$, as applicable, are shown at the top of each figure. Noise added in the figures (b,c) was sampled i.i.d. as $\epsilon \sim \cN(0. 0.5^2)$. All methods were afforded a mini-batch size of $50$. Note that the y-axis values in (a) are in machine precision range around $10^{-16}$. The y-axes of all three plots is in log-scale. Time is measured in seconds.}
	\label{fig:conv}
\end{figure*}

\begin{figure*}[t]
	\centering
	\begin{tabular}{@{\hskip 0.02\textwidth}c@{\hskip 0.02\textwidth}c@{\hskip 0.02\textwidth}c@{\hskip 0.02\textwidth}}
		\includegraphics[width=0.3\textwidth]{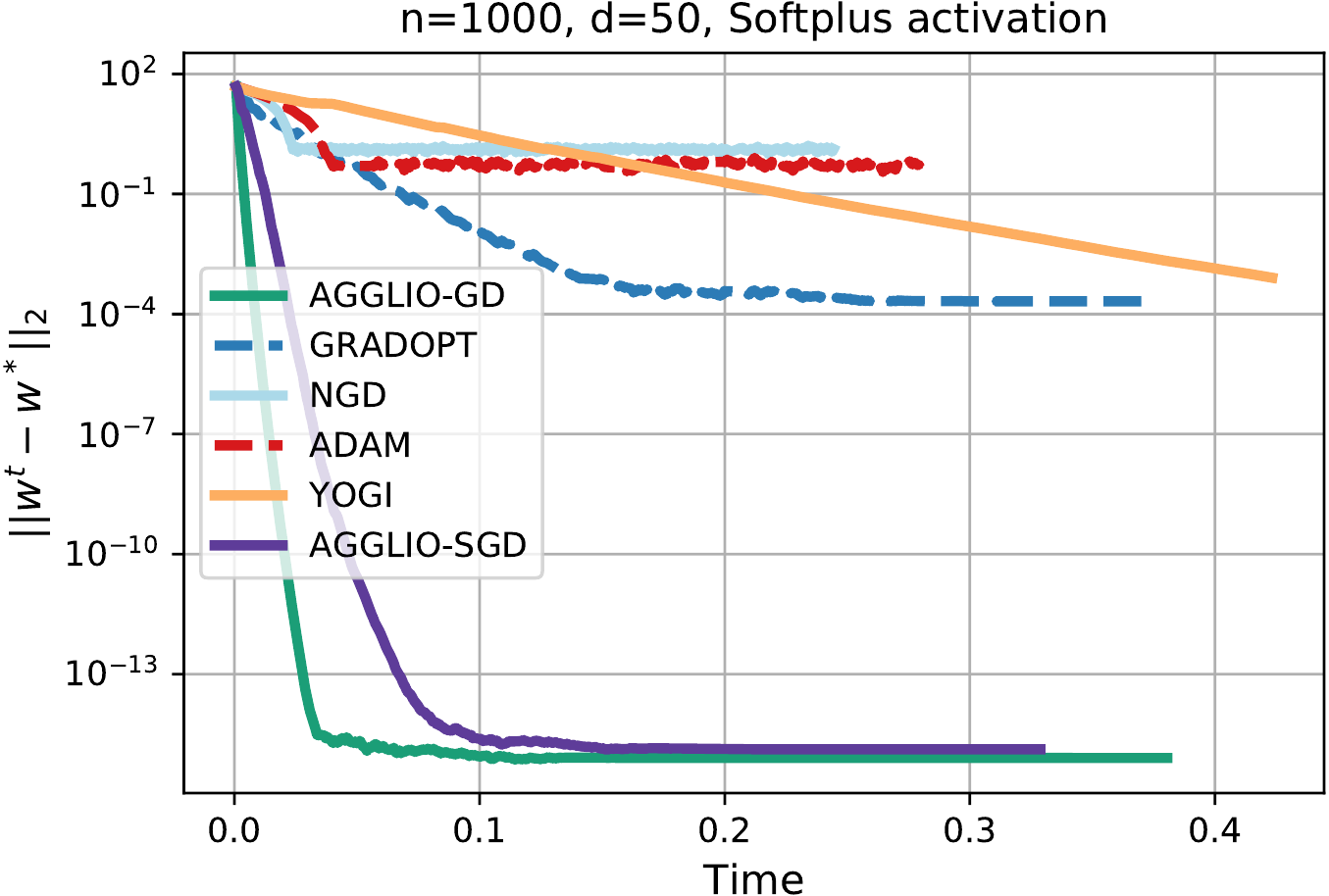} & \includegraphics[width=0.3\textwidth]{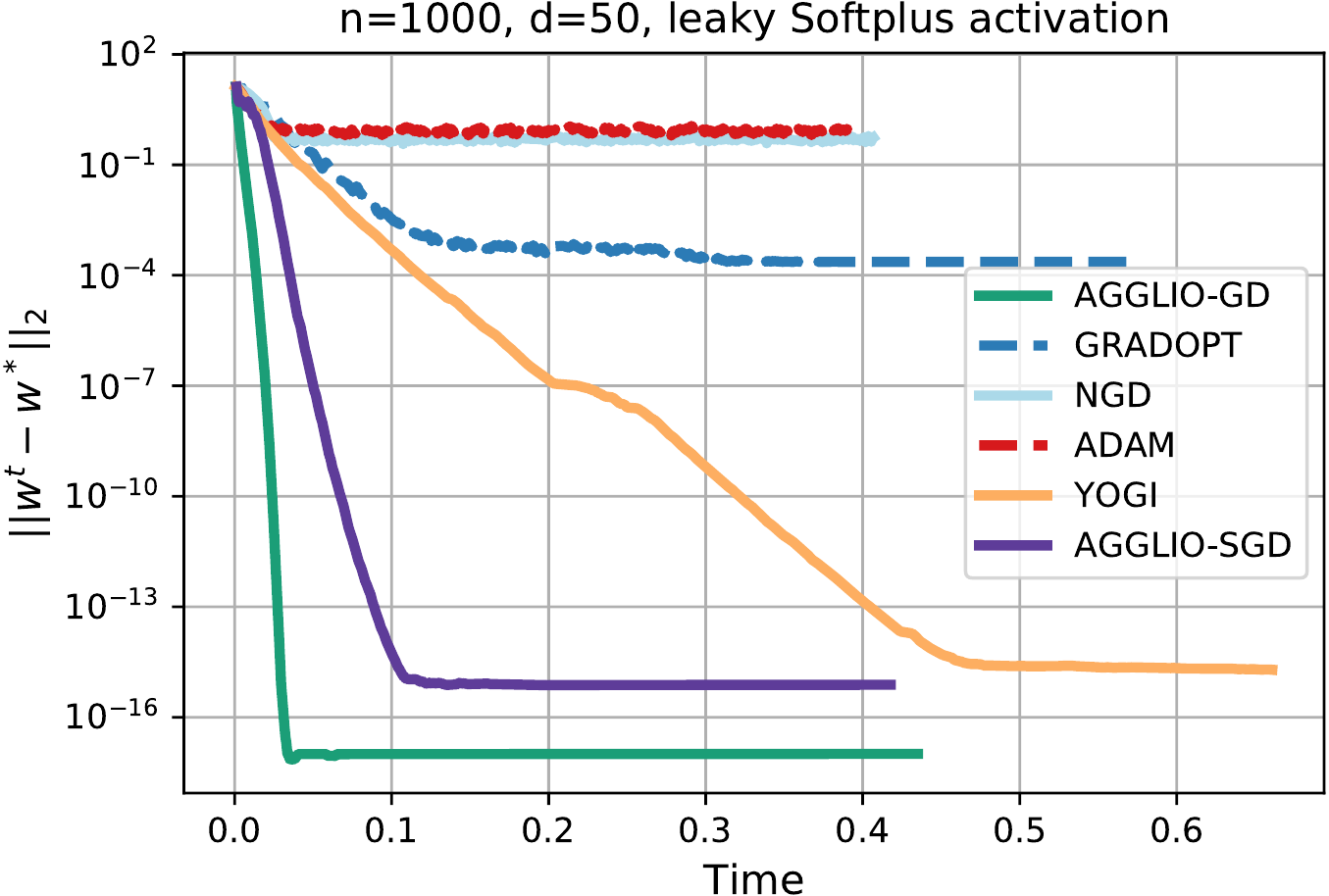} & 
		\includegraphics[width=0.3\textwidth]{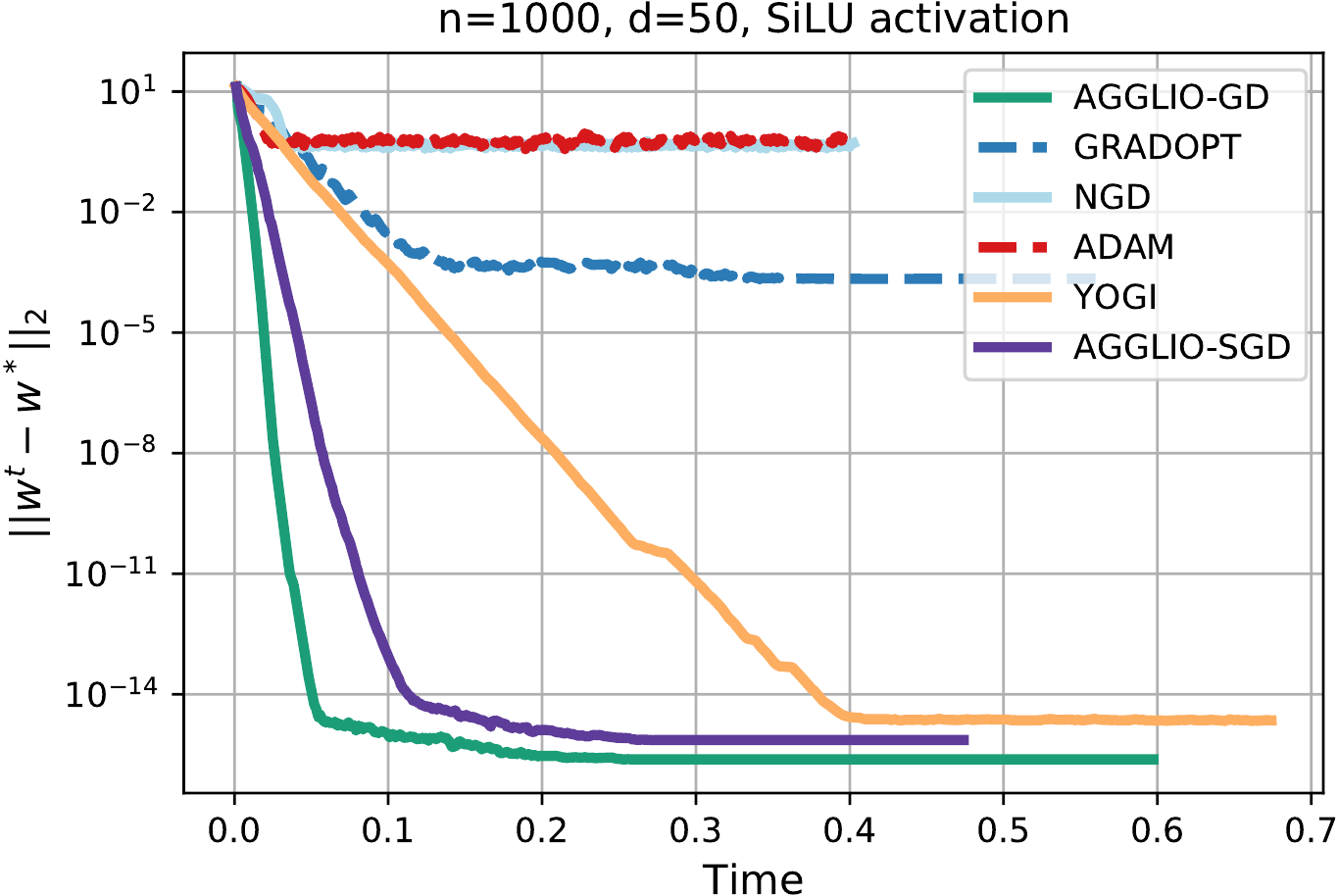}\\
		\small (a) Softplus & \small (b) Leaky Softplus & \small (c) SiLU
	\end{tabular}
	\caption{A study comparing \alg-GD and \alg-SGD with various competitors on different activation functions. \alg-GD outperforms competitor algorithms by several orders of magnitude, as it can compute the full gradient. All competitors and \alg-SGD were offered a mini-batch size of $50$. Time is measured in seconds.}
	\label{fig:act_conv}
\end{figure*}

\begin{figure*}[t]
	\centering
	\begin{tabular}{@{\hskip 0.02\textwidth}c@{\hskip 0.02\textwidth}c@{\hskip 0.02\textwidth}c@{\hskip 0.02\textwidth}}
		\includegraphics[width=0.3\textwidth]{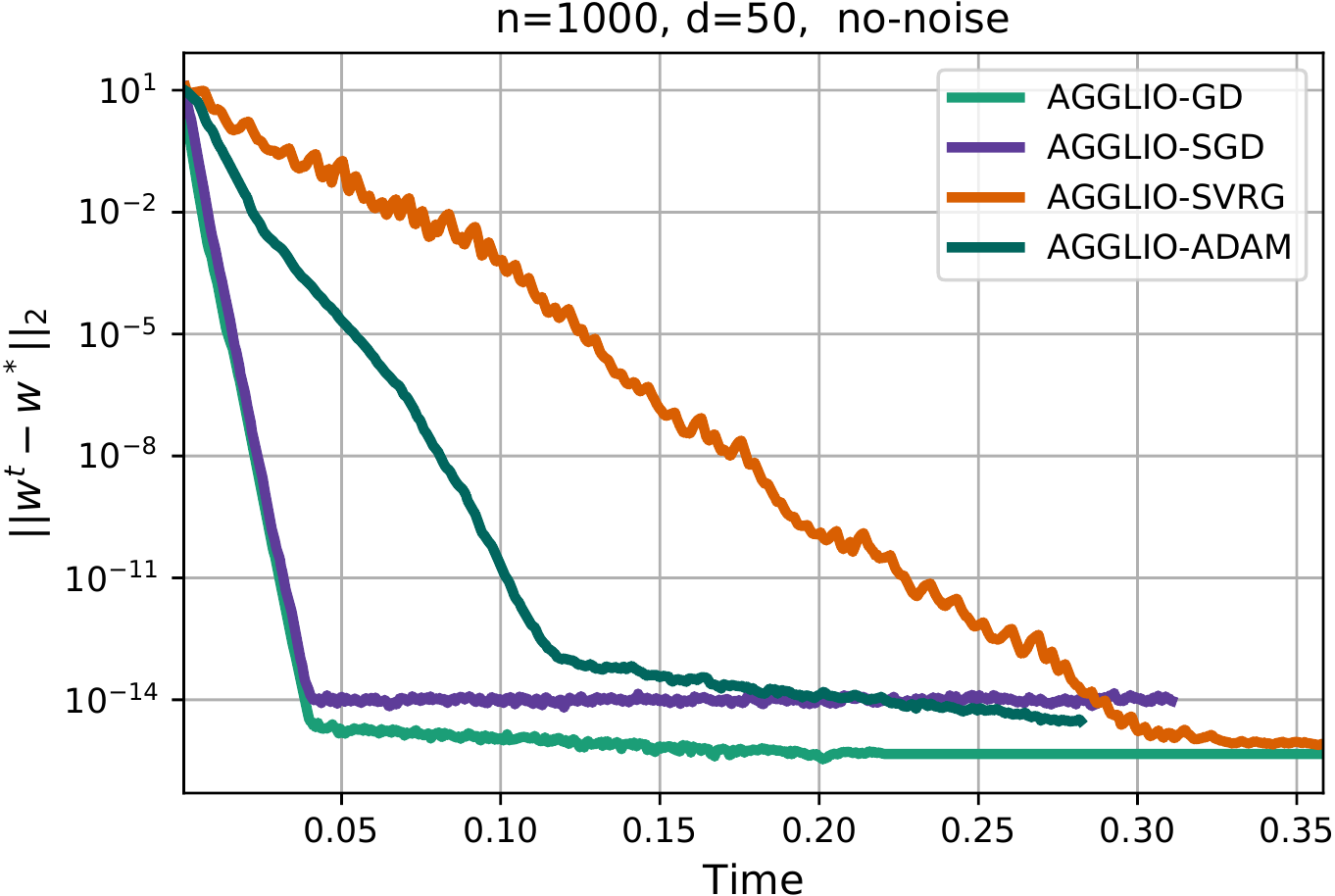} & \includegraphics[width=0.3\textwidth]{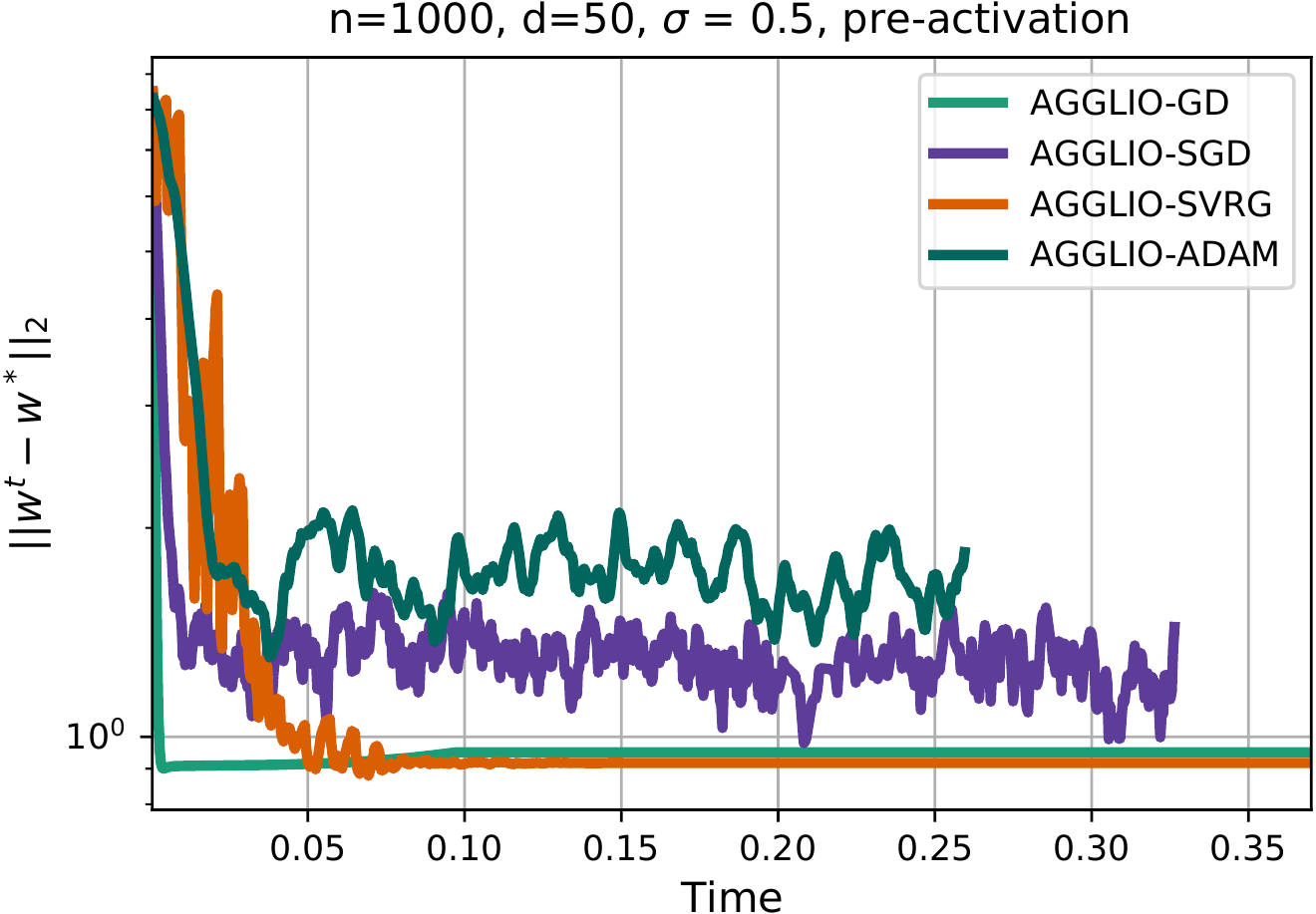} & 
		\includegraphics[width=0.3\textwidth]{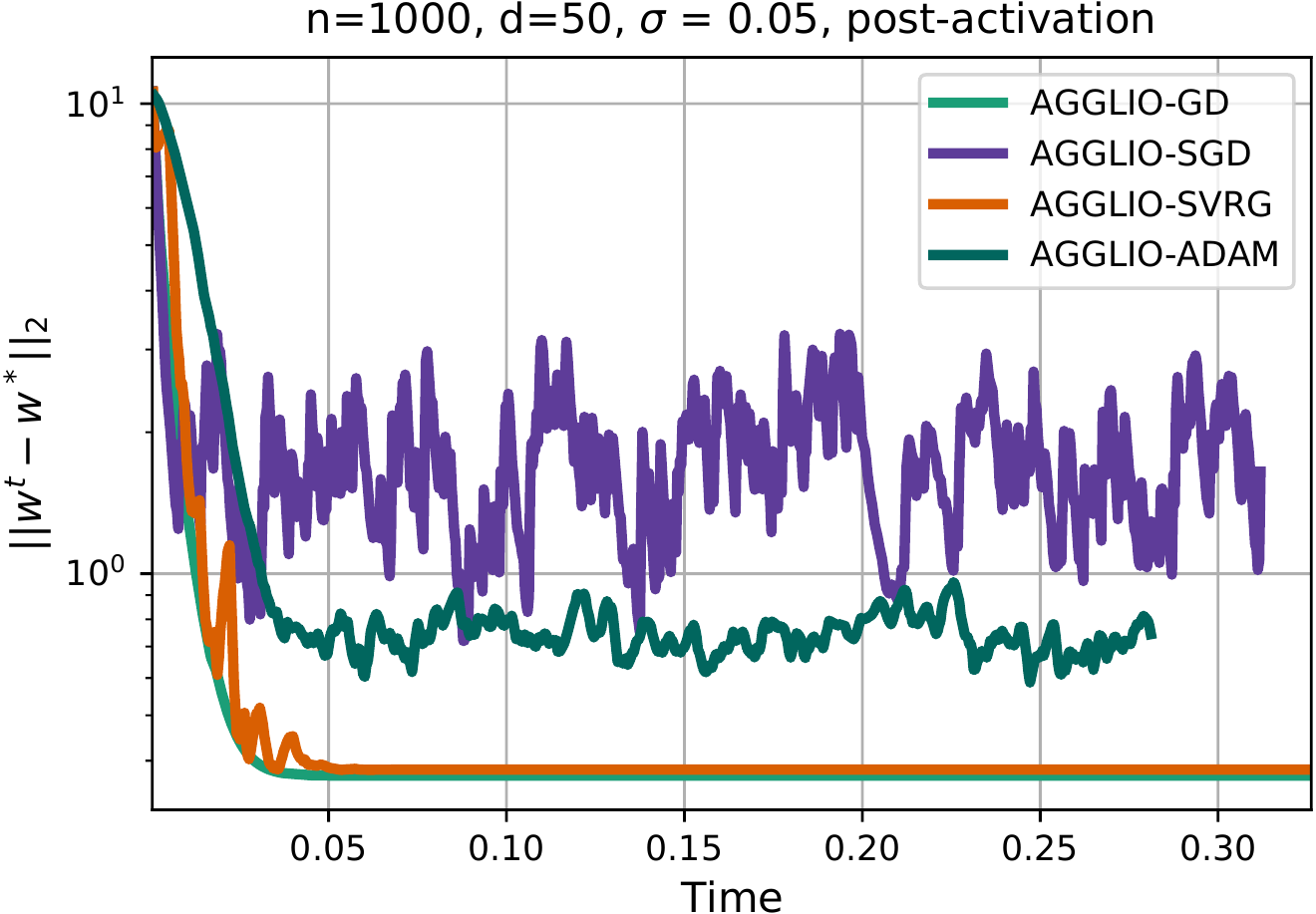}\\
		\small (a) noiseless convergence & \small (b) pre-activation noise & \small (c) post activation noise
	\end{tabular}
	\caption{An ablation study where four \alg variants (GD, SGD, SVRG, Adam) were executed in noiseless, pre-activation and post-activation noise settings with sigmoid activation. All \alg variants except the GD variant were afforded a mini-batch size of $50$. The noiseless setting in figure (a) allows recovery error in the order of machine precision i.e. around $10^{-16}$. The SVRG variant is slower but usually offers final performance at par with the GD variant. Time is measured in seconds.}
	\label{fig:agglio_conv}
\end{figure*}

\begin{figure*}[t]
	\centering
	\begin{tabular}{@{\hskip 0.02\textwidth}c@{\hskip 0.02\textwidth}c@{\hskip 0.02\textwidth}c@{\hskip 0.02\textwidth}}
 		\includegraphics[width=0.3\textwidth]{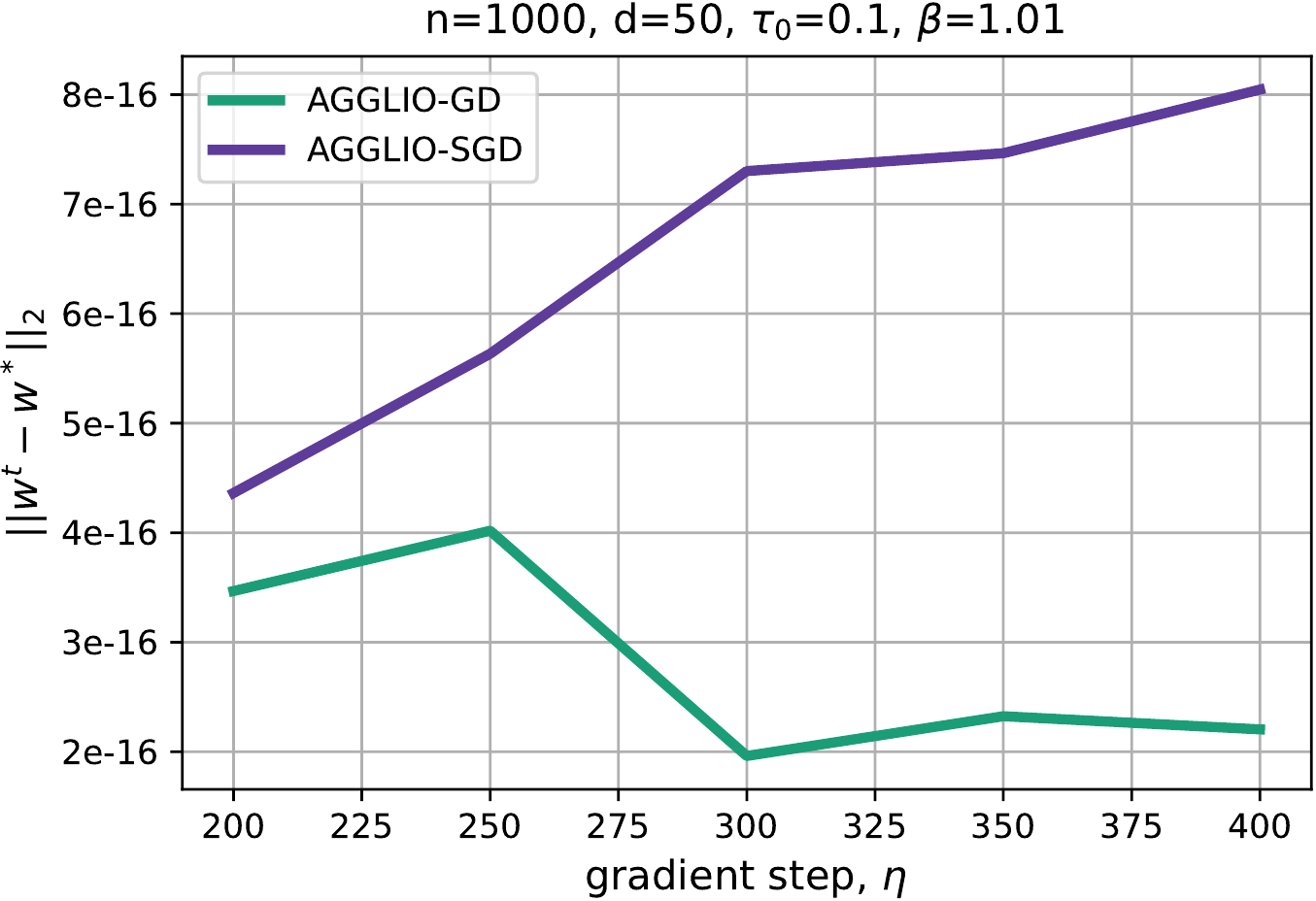} & \includegraphics[width=0.3\textwidth]{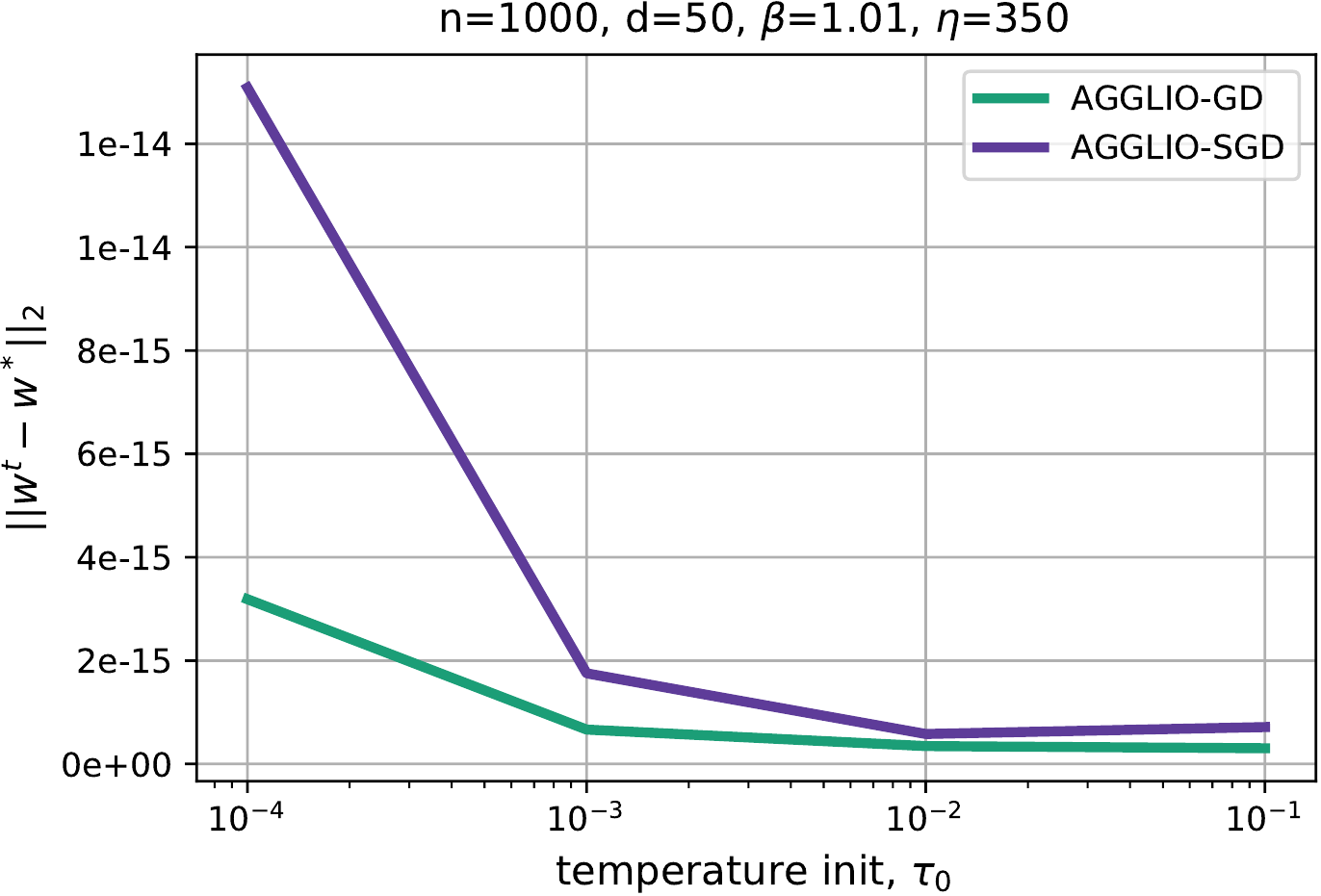} &
		\includegraphics[width=0.3\textwidth]{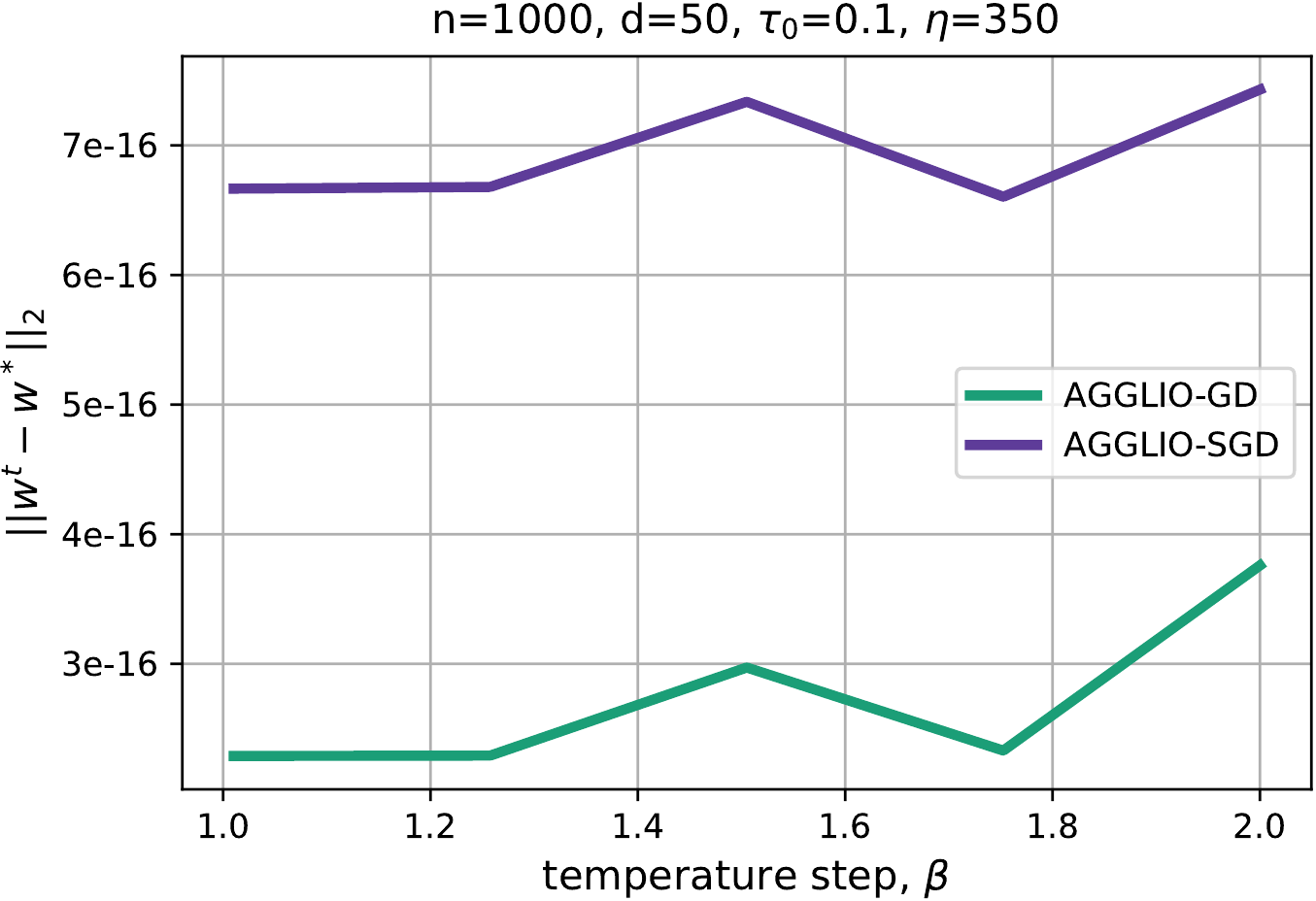} \\
		\small (a) gradient step, $\eta$ & \small (b) initial temperature, $\tau_0$  & \small (c) temperature increment, $\beta$
	\end{tabular}
	\caption{A study investigating the effect of mis-specifying \alg's hyper-parameters in the noiseless setting. Each hyper-parameter was varied in a wide range while keeping the rest fixed to their default values as highlighted in the title. The experiment demonstrates that \alg offers roughly the same model recovery error despite significant mis-specification in its hyper-parameters. Note that the y-axes values are all at machine precision levels around $10^{-16}$ or $10^{-14}$ i.e. the difference in performance of the GD and SGD variants of \alg, although accentuated in these plots, is actually minuscule.}
	\label{fig:sensitivity}
\end{figure*}

\begin{figure*}[t]
	\centering
	\begin{tabular}{@{\hskip 0.01\textwidth}c@{\hskip 0.01\textwidth}c@{\hskip 0.02\textwidth}c@{\hskip 0.01\textwidth}}
 		\includegraphics[width=0.3\textwidth]{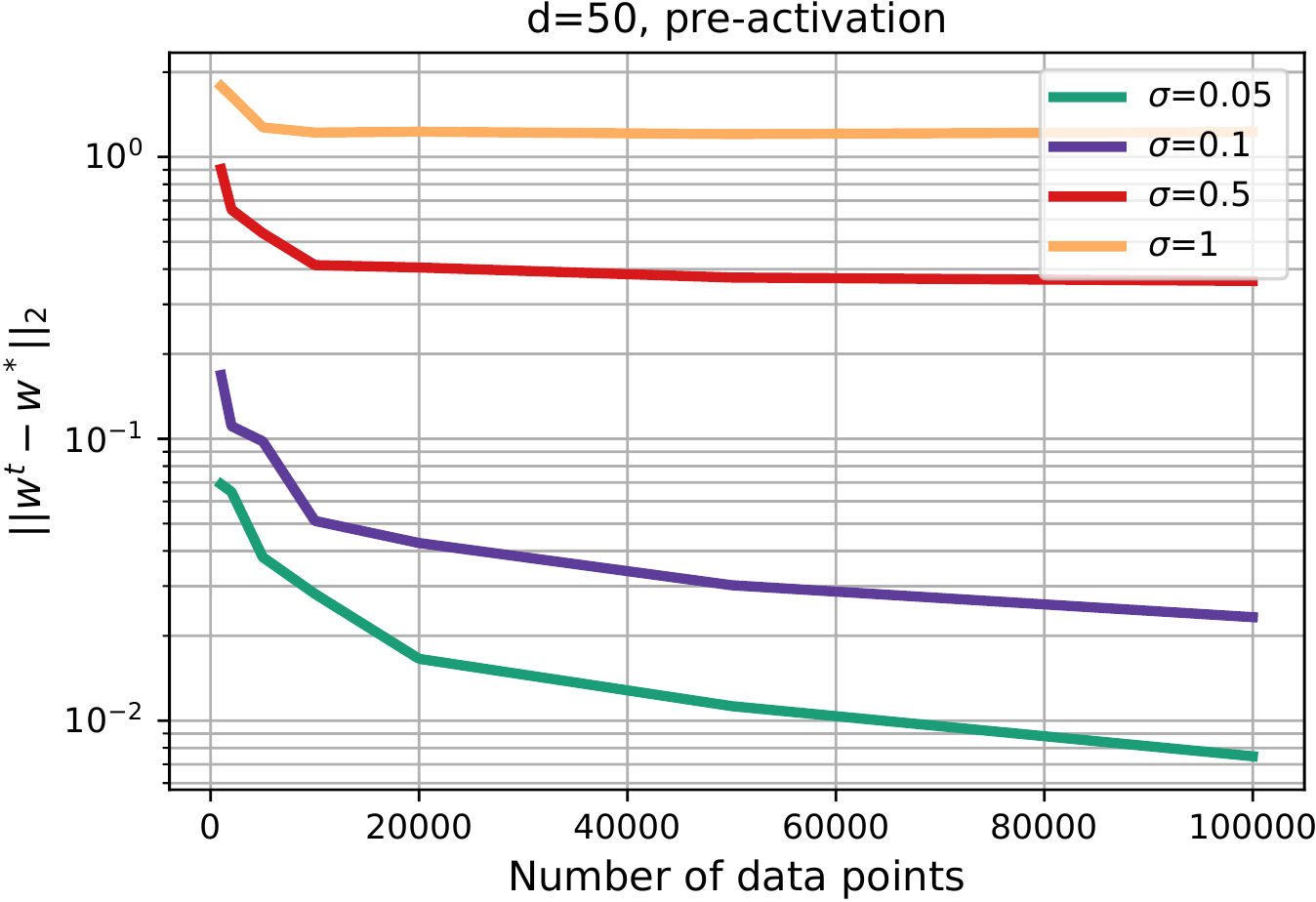} & \includegraphics[width=0.3\textwidth]{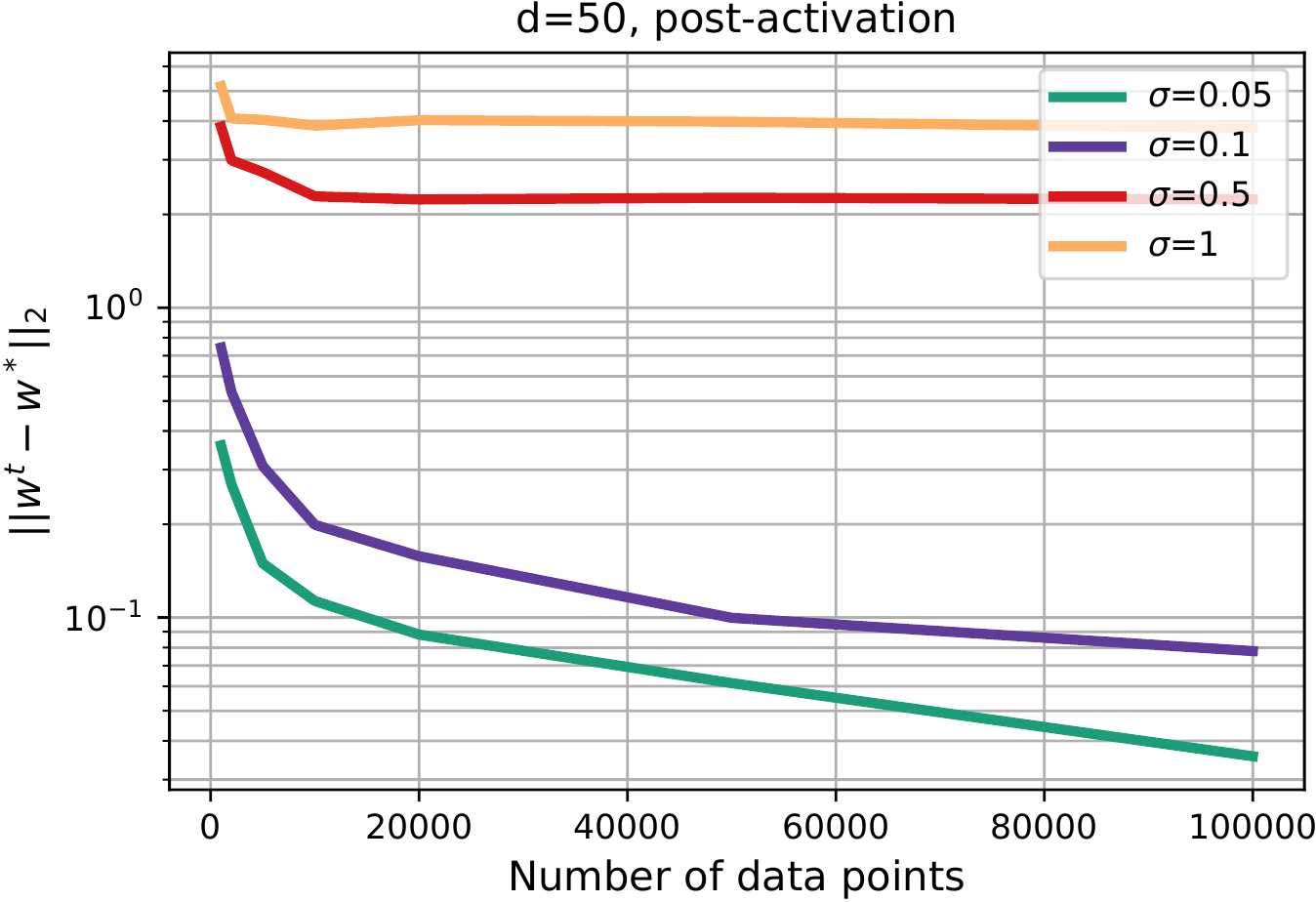} &
		\includegraphics[width=0.3\textwidth]{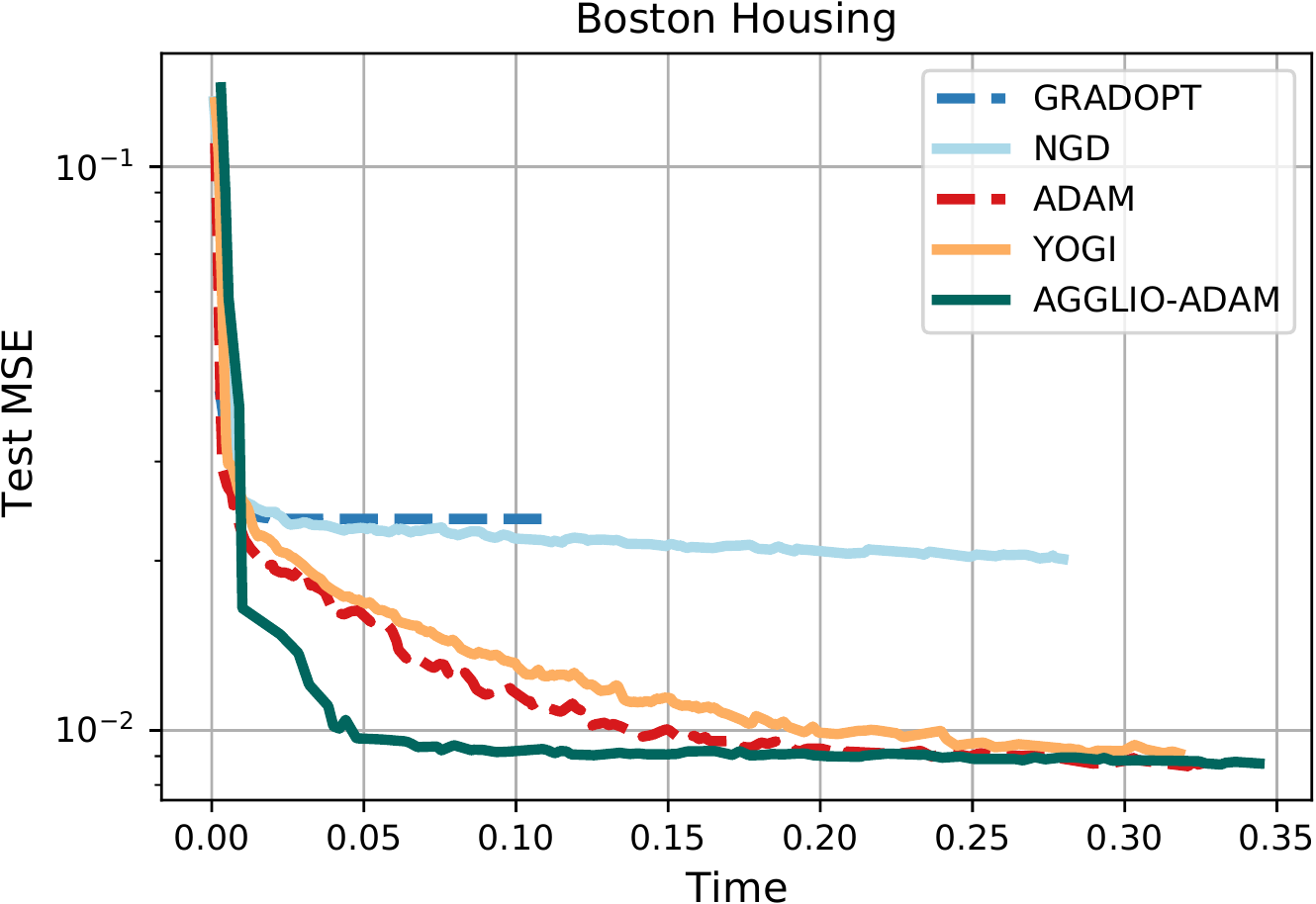} \\
		\small (a) consistency (pre-activation noise) & \small (b) consistency (post-activation noise) & \small (c) Boston Housing dataset
	\end{tabular}
	\caption{Figure (a) and (b) investigate whether \alg offers consistent model recovery in noisy settings as the number of data points is increased. \alg seems to offer consistent recovery for noise with small variance whereas for large variance noise, consistent recovery does not seem to be indicated. Pre-activation noise is seen to offer lower recovery error than post-activation noise. Figure (c) compares \alg in its Adam variant to competitor algorithms on the Boston Housing regression dataset. \alg, Adam and YOGI offer superior test RMSE than NGD, GradOpt. \alg converged the fastest.}
	\label{fig:consistency}
\end{figure*}

\begin{figure}[t]
	\centering
    \begin{tabular}{@{\hskip 0.01\linewidth}c@{\hskip 0.01\textwidth}c@{\hskip 0.01\textwidth}}
	\includegraphics[width=0.3\linewidth]{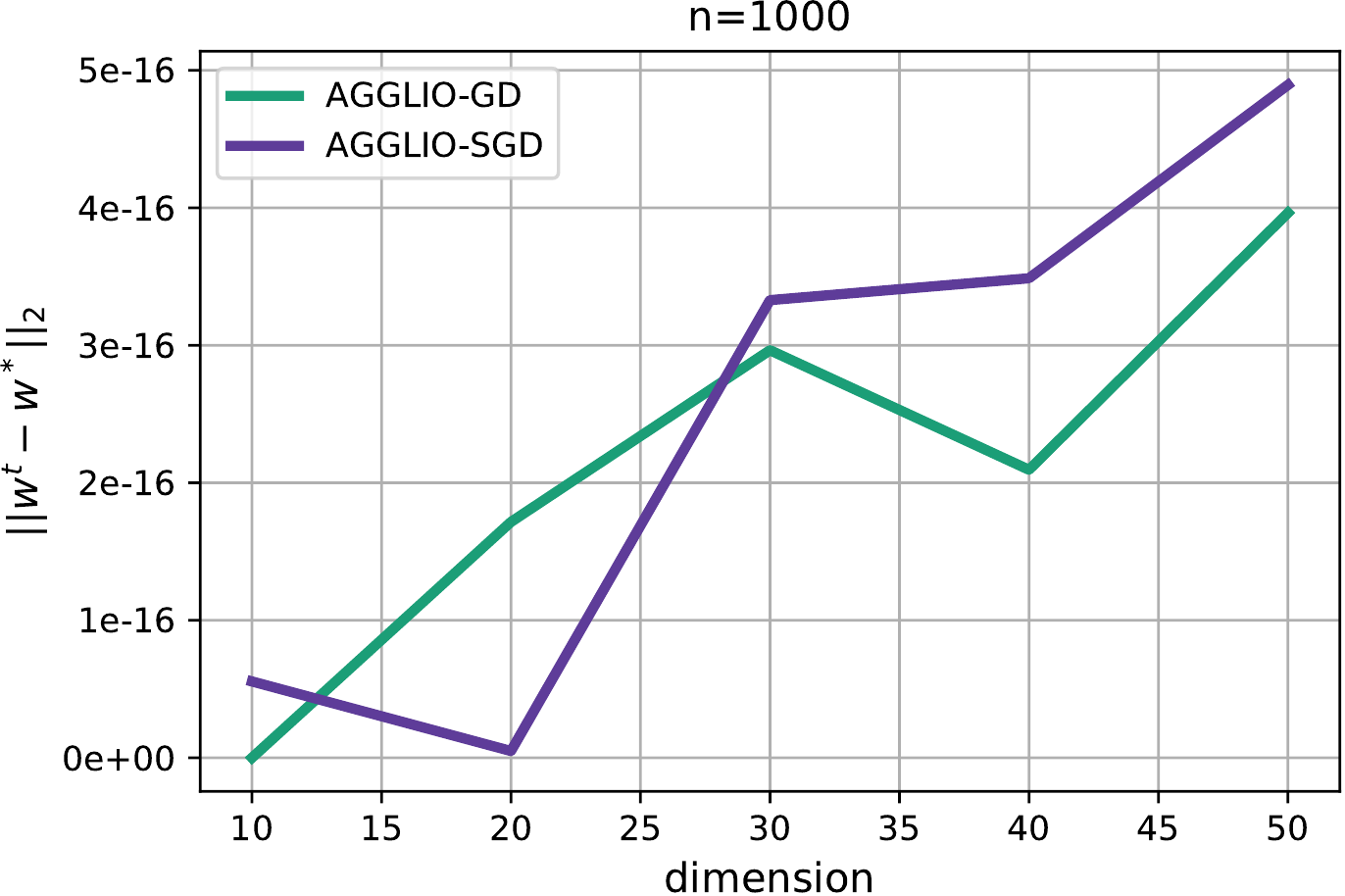}&
	\includegraphics[width=0.3\linewidth]{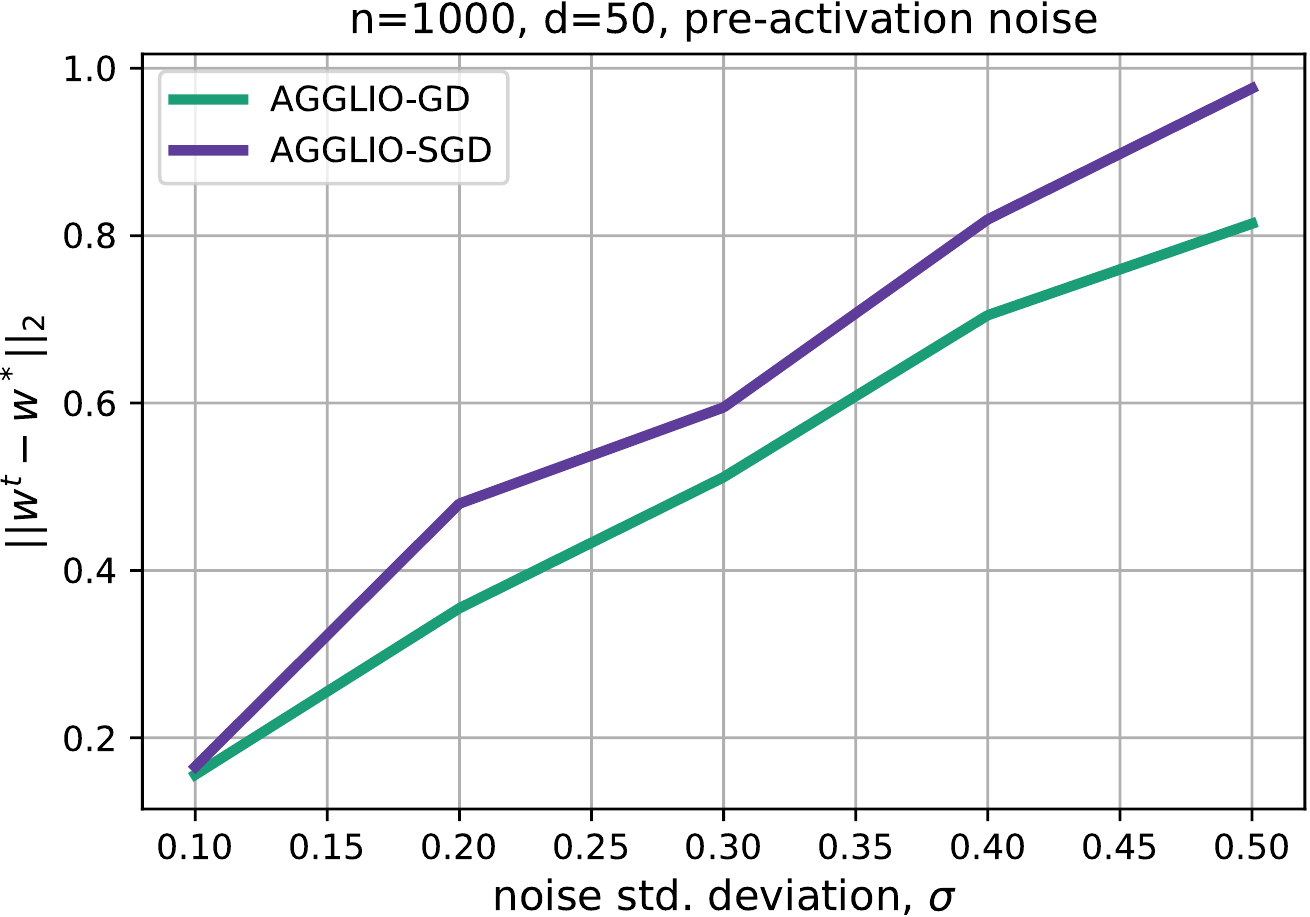}\\
	\small (a) dimension, $d$ & \small (b) noise std.dev. $\varsigma$
	\end{tabular}
	\caption{Figure (a) shows the effect of increasing covariate dimension $d$ on the performance of \alg variants in the noise-less setting. Both variants converge to near machine precision level accuracies (note that the y-axis is of the order of $10^{-16}$). Figure (b) shows the effect of noise magnitude, where the standard deviation of Gaussian noise was varied in the range $[0.1, 0.5]$ and applied pre-activation.}
	\label{fig:vary_dim_noise}
\end{figure}

\section{Experiments}
\label{sec:exps}
\noindent\textbf{Experimental Setting.} All experiments were performed on an Intel Core i7-6500U CPU with 16GB RAM. Statistics such as dataset size $n$, covariate dimensions $d$ and noise variance $\varsigma^2$ are mentioned above each figure. $20\%$ of train data was used as a held-out validation set, and grid search was used to tune hyperparameters except in hyper-parameter sensitivity experiments Fig~\ref{fig:sensitivity}(a-c). Synthetic datasets were generated using co-variates $\vx_i \sim \cN(\vzero, I_d)/\sqrt{d}$ and gold model $\vwo \sim N(0, I_d)$. All algorithms were initialized at a random model $\vw_0 \sim 10*\cN(\vzero, I_d)/\sqrt{d}$.

\noindent\textbf{Benchmarks.} \alg was bench-marked against several baselines and competitors, including Adam \cite{KingmaBa2015}, YOGI \cite{ReddiZSKK2018}, GradOpt \cite{HazanLS-S2016} and NGD \cite{HazanLS-S2015}. We note that this implicitly covers some other recent works such as \cite{SoltanolkotabiJL2019} which essentially propose SGD as the base method. Some techniques such as \cite{BalakrishnanWY2017} were not chosen as baselines since they require initializing the algorithm ``sufficiently'' close to the optimum. Adam was implemented with bias correction steps as recommended in \cite{KingmaBa2015}, with $m_0, v_0$ initialized to zero. YOGI has the same set of hyper-parameters as Adam and suggests initialization of $v_0$ to the square of the gradient, but initialization $v_0$ to zero gave better results. Equal batch sizes were offered to all minibatch-SGD methods, including \alg-SGD. 

\noindent\textbf{Hyperparameter Ranges.} We use notation used for the \verb!linspace! method in the Python library \verb!numpy! to describe hyperparameter ranges. Thus, \verb!linspace(a,b,n)! indicates that $n$ equi-spaced values in the range $[a,b]$ were used. For \alg, hyperparameters were tuned as follows: step length $\eta \in $\verb!linspace(1,500,10)!, initial temperature $\tau_0 \in \{10^{-1},10^{-2}, 10^{-3}, 10^{-4}\}$, and temperature increment $\beta \in $\verb!linspace(1.01,2,5)!. Both Adam and YOGI share the same four hyperparameters among which YOGI tunes only two in it's experiments and fixes the others. However, we found the fixed values to be sub-optimal and tuned all the four hyperparamaters for both the methods. We report the hyperparameter ranges in \verb!linspace! notation as well. The step $\alpha$ was tuned in \verb!linspace(0.01,0.2,5)!, momentum parameters $\beta_1, \beta_2$ in \verb!linspace(0.01,0.9,5)! and adaptivity level $\epsilon \in \{10^{-3},10^{-5},10^{-8}\}$. Similar hyperparameter tuning was performed for NGD, where step length was taken in the range \verb!linspace(0.01,10,20)! and GradOpt, for which the $\alpha$ hyperparameter was tuned in the range \verb!linspace(1,400,20)!.

\noindent\textbf{Convergence results.} Figures~\ref{fig:conv}(a,b,c) show the convergence of \alg and competitors in noiseless as well as noisy (pre- and post-activation label noise) setting with sigmoid activation. In the noisy setting, Gaussian noise $\epsilon \sim \cN(0, 0.5^2)$ was added for pre- and post-activation. \alg demonstrates superior convergence over the state-of-the-art methods in all settings, converging to machine precision-level errors e.g. $10^{-14}$ for noiseless labels at least twice as fast as any competitor. In the noisy setting, \alg is comparable or better convergence rates (note that the x-axes in Figures~\ref{fig:conv}(b,c) are in log-scale). Figures~\ref{fig:agglio_conv}(a,b,c) reproduces this experiment with \alg variants in settings similar to Fig.~\ref{fig:conv}. We observe \alg's GD, SGD and Adam versions give comparable performance for sigmoid activation with the SVRG variant being a bit more sluggish. Figures~\ref{fig:act_conv}(a,b,c) compare the convergence of various algorithms in the noiseless setting but with different activation functions including softplus, leaky softplus and SiLU. \alg offers performance superior to competitors both in terms of convergence speed as well as final model recovery error. YOGI \cite{ReddiZSKK2018} is the closest competitor and convergae to the gold model but slower than \alg.

\noindent\textbf{Sensitivity to Hyperparameters.} Figures~\ref{fig:sensitivity}(a,b,c) show the stability of \alg with respect to its hyper-parameters in a noiseless setting by varying each hyper-parameter in an admissible range while keeping the rest fixed to pre-tuned values as mentioned in each figure title. Note that y-axes values in all these plots is of the order of $10^{-16}$. The plots establish that both \alg-GD and \alg-SGD converge to essentially the gold model $\vwo$ despite wide mis-specification in the hyperparameters.

\noindent\textbf{Consistency of \alg.} Figure~\ref{fig:vary_dim_noise}(b) shows the effect of increasing the noise magnitude by varying the noise variance $\varsigma^2$ in the range $[0.1^2, 0.5^2]$ and used to corrupt the labels in the pre-activation setting. As expected, both variants offer linear increase in error levels with increasing $\varsigma$ that is consistent with the $\bigO\varsigma$ error rate established in Section~\ref{sec:noisy-conv}. Figures~\ref{fig:consistency}(a,b) show that if the noise variance is small (e.g. $\varsigma = 0.1, 0.05$, then \alg does offer vanishing error as the number of data points is increased. This does not seem to be the case for larger noise variance (e.g. $\varsigma = 0.5, 1$) where recovery error does not appreciably decrease even when the number of data points is increased by an order of magnitude. Thus, these results corroborate Theorem~\ref{thm:consistent-post-noisy-sigmoid} and further indicate that low-variance seems to be sufficient as well as necessary for consistent recovery. What is curious is that Figure~\ref{fig:consistency}(a) seems to indicate consistent recovery even in pre-activation setting that is not assured by Theorem~\ref{thm:consistent-post-noisy-sigmoid} and needs to be explored further.

\noindent\textbf{Effect of covariate dimension and Real Data.} Figure~\ref{fig:vary_dim_noise}(a) shows the effect of varying the covariate dimension $d$ on \alg's performance in the noise-less setting, where both \alg-GD and \alg-SGD converge to near-machine precision errors (y-axis values in the range $10^{-16}$). Figure~\ref{fig:consistency}(c) shows the performance of \alg and competitors on the Boston Housing regression dataset. The dataset was processed by normalizing its target labels and passing them through a sigmoidal activation before learning a GLM model using the sigmoidal activation by minimizing an objective of the form \eqref{eq:obj-main-restated}. \alg, Adam and YOGI offered significantly lower test RMSE scores compared to NGD and GradOpt although \alg offered noticeably faster convergence.

\section{Conclusion and Future Work}
\label{sec:conc}
This paper developed the \alg method and associated techniques such as the notions of ELSC/ELSS (Definition~\ref{defn:elsc-elss}) to analyze its convergence for a wide range of generalized linear model learning problems using popular activation functions such as sigmoid, softplus and SiLU. The most prominent directions of future work include \textbf{1)} exploring consistent model recovery under low variance pre-activation noise as experiments seem to indicate (Theorem~\ref{thm:conv-noisy-sigmoid} works only for low variance post-activation noise), \textbf{2)} applying momentum and other acceleration techniques to further boost \alg's convergence as has been shown to benefit other descent methods \cite{CutkoskyMehta2020}, \textbf{3)} exploring learning settings with \emph{censored} labels e.g. where $y = \phi(\vx^\top\vwo) + \epsilon$ is thresholded/binned to produce class/count labels, and \textbf{4)} extending \alg to deep networks with one or more hidden layers.

\section*{Acknowledgements}
D.D. is supported by the Visvesvaraya PhD Scheme for Electronics \& IT (FELLOW/2016-17/MLA/194). P.K. thanks Microsoft Research India and Tower Research for research grants.

\bibliographystyle{plain}
\bibliography{refs}

\appendix
\allowdisplaybreaks
\section{Calculation of ELSC/ELSS Constants in Table~\ref{tab:elsc-elss} and Proof of Lemma~\ref{lem:elsc-elss}}
\label{app:app-elsc-elss}

The proof technique to establish Lemma~\ref{lem:elsc-elss} and the constants in Table~\ref{tab:elsc-elss} uses tail deviation bounds for the spectra of positive definite matrices \cite{Vershynin2018} to show that with high probability, the Hessians of the (graduated) objective functions are positive definite at all points within a ball around $\vwo$. In summary, we execute the following proof steps given data $\bc{(\vx_i,y_i)}_{i=1}^n$ where $\vx_i \sim \cN(\vzero, I_d)$ and $y_i = \phi(\vx_i^\top\vwo)$:
\begin{enumerate}[leftmargin=4\parindent]
	\item[\textbf{Step 1}]: Show that for any $r \in [0, R], \tau \in [0,1]$ and any $\vw \in \cB(\vwo, r)$, the Hessian $\nabla^2_\vw\cL_\tau(\vw)$ can be written as $\frac1n\sum s_i\cdot\vx_i\vx_i^\top$ where the weights $s_i$ depend only on $\vx_i, \vw, \vwo, \phi, \tau$
	\item[\textbf{Step 2}]: Show that for some $\Lambda \geq \lambda > 0$, for any fixed unit vector $\vv \in S^{d-1}$, we have
	\[
	\Ee{\vx \sim \cN(\vzero, I)}{s\ip{\vx}{\vv}^2} \in [\lambda, \Lambda]
	\]
	\item[\textbf{Step 3}]: Show that the random variable $s\ip{\vx}{\vv}^2$ is sub-exponential and use Bernstein-style tail inequalities for subexponential random variables such as those from \cite{Vershynin2018} to show that with confidence at least $1 - \exp(-\Om n)$, for any fixed unit vector $\vv$
	\[
	\frac1n\sum s_i\ip{\vx_i}{\vv}^2 \in [0.999\lambda, 1.001\Lambda]
	\] 
	\item[\textbf{Step 4}]: Take a union bound over an $\epsilon$-cover over all unit vectors $\vv \in S^{d-1}$ with $\epsilon = \frac14$ and use standard arguments to show that whenever $n \geq \Om{d\log d}$, we have with confidence at least $1 - \exp(-\Om n)$
	\[
	0.998\lambda \leq \lambda_{\min}(\nabla^2_\vw\cL_\tau(\vw)) \leq \lambda_{\max}(\nabla^2_\vw\cL_\tau(\vw)) \leq 1.002\Lambda,
	\]
	where $\lambda_{\min}$ and $\lambda_{\max}$ denote respectively, the smallest and largest eigenvalues of a square PSD matrix.
	\item[\textbf{Step 5}]: Take another union bound over another $\epsilon$-cover, this time over all models $\vw \in \cB(\vwo,r)$ with $\epsilon \approx \frac1{(d + \log n)^{\bigO1}}$ to show that, with confidence at least $1 - \exp(-\Om n)$, for all $\vw \in \cB(\vwo,r)$, we have
	\[
	0.997\lambda \leq \lambda_{\min}(\nabla^2_\vw\cL_\tau(\vw)) \leq \lambda_{\max}(\nabla^2_\vw\cL_\tau(\vw)) \leq 1.003\Lambda
	\]
	\item[\textbf{Step 6}]: Take union bounds over separate $\epsilon$-nets over all $r \in [0, R]$ and all $\tau \in [0,1]$ to show that with confidence at least $1 - \exp(-\Om n)$, for any $r \in [0, R], \tau \in [0,1]$ and any $\vw \in \cB(\vwo, r)$, we have
	\[
	0.996\lambda \leq \lambda_{\min}(\nabla^2_\vw\cL_\tau(\vw)) \leq \lambda_{\max}(\nabla^2_\vw\cL_\tau(\vw)) \leq 1.004\Lambda
	\]
\end{enumerate}
The above chain of arguments can be seen to establish the ELSC/ELSS properties by noting that a doubly differentiable function $f: \bR^d \rightarrow \bR$ is $\lambda$-strongly convex (resp. $\Lambda$-strongly smooth) if and only if
\[
\lambda \leq \lambda_{\min}(\nabla^2f(\vx)) \leq \lambda_{\max}(\nabla^2f(\vx)) \leq \Lambda
\]
We also note that Steps 3, 4, 5, 6 are quite routine and readily established. We will only sketch them for the sigmoid activation case to present a use case. Steps 1 and 2, on the other hand, are the critical calculations that would need to be repeated for every activation function.

\subsection{Steps 1 and 2: Sigmoid Activation}
\label{app:step12-sigmoid}
Let us use the shorthands $\sigma^\tau_i := \sigma_\tau(\vx_i^\top\vw)$ to denote the prediction by model $\vw$ with the $\tau$-graduated sigmoid $\sigma$ activation and $y^\tau_i := \sigma_\tau(\vx_i^\top\vwo) = \sigma_\tau(\sigma^{-1}(y_i))$ to denote the same for the gold model $\vwo$ (recall that we are working in the noiseless setting here). Then, elementary calculations show that
\begin{align*}
	\nabla_\vw\cL_\tau(\vw) &= \frac{2\tau}n\sum_{i=1}^n(\sigma^\tau_i - y^\tau_i)\br{\sigma^\tau_i - (\sigma^\tau_i)^2}\cdot\vx_i\\
	\nabla^2_\vw\cL_\tau(\vw) &= \frac{2\tau^2}n\sum_{i=1}^n\br{\br{\sigma^\tau_i - (\sigma^\tau_i)^2}^2 + (\sigma^\tau_i - y^\tau_i)(1 - 2\sigma^\tau_i)\br{\sigma^\tau_i - (\sigma^\tau_i)^2}}\cdot\vx_i\vx_i^\top = \frac1n\sum s_i\cdot\vx_i\vx_i^\top,
\end{align*}
where $s_i = 2\tau^2\br{\underbrace{\br{\sigma^\tau_i - (\sigma^\tau_i)^2}^2}_{(A)} + \underbrace{(\sigma^\tau_i - y^\tau_i)(1 - 2\sigma^\tau_i)\br{\sigma^\tau_i - (\sigma^\tau_i)^2}}_{(B)}}$. We note that the term $(A)$ is always strictly positive but the second term $(B)$ could be positive or negative. However, we note that we nevertheless have the bound
\begin{align*}
	\abs{(B)} &\leq \frac1{10}\abs{\sigma^\tau_i - y^\tau_i}\\
					  &\leq \frac\tau{40}\abs{\vx_i^\top(\vw - \vwo)},
\end{align*}
where the first step follows since the function $x \mapsto \abs{(1-2x)(x-x^2)}$ takes values only upto $0.1$ in the interval $x \in [0,1]$ which the range of the sigmoid function, and the second step follows since the $\tau$-graduated sigmoid function is $\frac\tau4$-Lipschitz. Using this, we bound the contributions due to both terms below. We first bound the contribution due to $(B)$ followed by that due to $(A)$.

For $\vx \sim \cN(\vzero, I_d)$ with $\vx = (x_1, x_2, \ldots, x_d)$ and fixed vectors $\vw, \vwo, \vv$ we wish to calculate the quantity $\E{\abs{\vx_i^\top(\vw - \vwo)}\ip{\vx_i}{\vv}^2}$. However, by rotational symmetry of the normal distribution, we can w.l.o.g. take the vector $\vw - \vwo$ to be non-zero only on its first coordinate i.e. $\vw - \vwo = (\norm{\vw-\vwo}_2\cdot\pm1,0,0\ldots,0)$ and $\vv$ to be non-zero in only its first two coordinates i.e. $\vv = (v_1, v_2, 0, 0, \ldots, 0)$ with $v_1^2 + v_2^2 = 1$. Using the fact that coordinates of a normal random vector $\vx$ are i.i.d. normal scalar random variables themselves, we have
\begin{align*}
	\E{\abs{\vx^\top(\vw - \vwo)}\ip\vx\vv^2} &= \Ee{x_1, x_2 \sim \cN(0,1)}{\norm{\vw-\vwo}_2\cdot\abs{x_1}\cdot(x_1v_1 + x_2v_2)^2}\\
	&\leq r \cdot \Ee{x_1 \sim \cN(0,1)}{\abs{x_1}^3v_1^2 + \abs{x_1}v_2^2}\\
	&= r\sqrt{\frac2\pi}(2v_1^2 + v_2^2) \leq 2r\sqrt{\frac2\pi},
\end{align*}
where in the second step we use the fact that $\E{\cond{x_1x_2}x_1} = 0$ and $\E{x_2^2} = 1$ as $x_1,x_2$ are i.i.d. normal, apply the law of total expectation as well as use the fact that by assumption $\vw \in \cB(\vwo,r)$, and in the last step we used the fact that the definite Gaussian integral obtained in the second step happens to have a closed-form solution, and that $v_1^2 + v_2^2 = 1$ and as a result, $v_1^2 \leq 1$. This bounds the contribution due to the term $(B)$ upon incorporating the multiplicative factor of $\frac\tau{40}$.

We now move on to bound the contribution due to the term $(A)$. Note that this term does not involve $\vwo$ at all. Yet again, using rotational symmetry, we take $\vw = (\norm\vw_2\cdot\pm1,0,0,\ldots,0)$ and $\vv = (v_1, v_2, 0, 0, \ldots, 0)$ with $v_1^2 + v_2^2 = 1$. This gives us
\begin{align*}
	\E{\br{\sigma^\tau - (\sigma^\tau)^2}^2\ip{\vx_i}{\vv}^2} &= \Ee{x_1, x_2 \sim \cN(0,1)}{\frac{\exp(-2\tau\norm{\vw}_2\cdot x_1)}{\br{1+\exp(-\tau\norm{\vw}_2\cdot x_1)}^4}\br{x_1^2v_1^2+x_2^2v_2^2+2x_1x_2v_1v_2}}\\
	&= \Ee{x_1 \sim \cN(0,1)}{\frac{\exp(-2\tau\norm{\vw}_2\cdot x_1)}{\br{1+\exp(-\tau\norm{\vw}_2\cdot x_1)}^4}\br{x_1^2v_1^2+v_2^2}}\\
	&= \sqrt{\frac2\pi}\int_0^\infty \frac{\exp(-2\tau\norm{\vw}_2\cdot t)}{\br{1+\exp(-\tau\norm{\vw}_2\cdot t)}^4}\br{v_1^2t^2+v_2^2}\exp\br{-\frac{t^2}2}\ dt\\
	&= \sqrt{\frac2\pi}\exp(2c^2)\int_0^\infty \frac{\exp\br{-\frac12(t+2c)^2}}{\br{1+\exp(-c\cdot t)}^4}\br{v_1^2t^2+v_2^2}\ dt\\
	&\geq \frac1{16}\sqrt{\frac2\pi}\exp(2c^2)\int_0^\infty \exp\br{-\frac12(t+2c)^2}\br{v_1^2t^2+v_2^2}\ dt,
\end{align*}
where the second step follows from using $\E{\cond{x_1x_2}x_1} = 0$ and $\E{x_2^2} = 1$, the third step follows since the integrand is an even function, the fourth step follows from simple manipulations and using the shorthand $c =  \tau\norm\vw_2 \leq \tau\cdot m_r$ where we define $m_r = \max_{\vw \in \cB(\vwo,r)}\ \norm\vw_2 \leq \norm\vwo_2 + r$, the fifth step follows from using $c > 0$. The resulting definite integral can be computed as shown below, but in terms of the (complementary) error functions. These do not have closed form expressions but can be readily lower bounded as shown below.
\begin{align*}
	\exp(2c^2)\int_0^\infty \exp\br{-\frac12(t+2c)^2}\br{v_1^2t^2+v_2^2}\ dt &= v_1^2\br{\sqrt{\frac\pi2}(4c^2+1)\exp(2c^2)\erfc(\sqrt2c) - 2c}\\
	&\quad+ v_2^2\br{\sqrt{\frac\pi2}\exp(2c^2)\erfc(\sqrt2c)}\\
	&\geq \sqrt{\frac\pi2}(4c^2+1)\exp(2c^2)\erfc(\sqrt2c) - 2c\\
	&\geq \exp(-3c),
\end{align*}
where the second step hold since $v_1^2 + v_2^2 = 1$ and the multiplier of $v_1^2$ on the right hand side of the first line is dominated by the multiplier of $v_2^2$, and the last step follows by inspection. This allows us to establish the following lower bound on the expectation of interest
\[
\Ee{\vx \sim \cN(\vzero, I)}{s\ip{\vx}{\vv}^2} \geq \lambda \equiv \sqrt{\frac2\pi}\tau^2\br{\frac{\exp(-3\tau\cdot m_r)}8 - \frac{\tau\cdot r}{10}}
\]
Similarly, by using bound $1 + \exp(-c\cdot t) \geq 1$ to upper bound the expectation $\E{\br{\sigma^\tau - (\sigma^\tau)^2}^2\ip{\vx_i}{\vv}^2}$, and choosing the term corresponding to $v_2^2$ as the dominant one this time, we can get an upper bound of the following form
\[
\Ee{\vx \sim \cN(\vzero, I)}{s\ip{\vx}{\vv}^2} \leq \Lambda \equiv \sqrt{\frac2\pi}\tau^2\br{\exp(-\tau\cdot m_r/10) + \frac{\tau\cdot r}{10}}
\]
This concludes the expectation calculations for the sigmoid activation function.

\subsection{Steps 1 and 2: Softplus Activation}
\label{app:step12-softplus}
Let us denote the softplus activation with the letter $\rho$ (since it is a soft counterpart to the \emph{ReLU} activation). We will use $y^\tau_i := \rho_\tau(\vx_i^\top\vwo) = \rho_\tau(\rho^{-1}(y_i))$ to denote the predictions by the gold model $\vwo$ (recall that we are working in the noiseless setting here) and $\rho^\tau_i := \rho_\tau(\vx_i^\top\vw)$ to to denote the predictions by model $\vw$ with the $\tau$-graduated softplus $\rho$ activation. It turns out that the derivatives of the (graduated) softplus activation are related to those of the (graduated) sigmoid activation. Thus, we will continue to use $\sigma^\tau_i := \sigma_\tau(\vx_i^\top\vw)$ to to denote the predictions by model $\vw$ with the $\tau$-graduated sigmoid $\sigma$ activation. Then straightforward calculations show that
\begin{align*}
	\nabla_\vw\cL_\tau(\vw) &= \frac2n\sum_{i=1}^n(\rho^\tau_i - y^\tau_i)\sigma^\tau_i\cdot\vx_i\\
	\nabla^2_\vw\cL_\tau(\vw) &= \frac2n\sum_{i=1}^n\br{(\sigma^\tau_i)^2 + \tau(\rho^\tau_i - y^\tau_i)\br{\sigma^\tau_i - (\sigma^\tau_i)^2}}\cdot\vx_i\vx_i^\top = \frac1n\sum s_i\cdot\vx_i\vx_i^\top,
\end{align*}
where $s_i = 2\br{\underbrace{(\sigma^\tau_i)^2}_{(A)} + \underbrace{\tau(\rho^\tau_i - y^\tau_i)\br{\sigma^\tau_i - (\sigma^\tau_i)^2}}_{(B)}}$. Yet again, note that the term $(A)$ is always strictly positive but the second term $(B)$ could be positive or negative. However, we can similarly bound
\begin{align*}
	\abs{(B)} &\leq \frac\tau4\abs{\rho^\tau_i - y^\tau_i}\\
	&\leq \frac\tau4\abs{\vx_i^\top(\vw - \vwo)}
\end{align*}
where the first step follow since we always have $\br{\sigma^\tau_i - (\sigma^\tau_i)^2} \leq \frac14$ and the second step follows since the $\tau$-graduated softplus function is $1$-Lipschitz. However, upto the multiplicative constants, we have already bounded the contribution of this term above as
\[
\E{\abs{\vx_i^\top(\vw - \vwo)}\ip{\vx_i}{\vv}^2} \leq r\sqrt{\frac2\pi}(2v_1^2 + v_2^2) \leq 2r\sqrt{\frac2\pi}
\]
which completes the upper bound on the contribution due to $(B)$.

To bound the contribution due to $(A)$ we get, by exploiting rotational symmetry as before,
\begin{align*}
\Ee{\vx \sim \cN(\vzero, I_d)}{\sigma_\tau(\vw^\top\vx)^2\ip{\vx}{\vv}^2}
&= \Ee{x_1,x_2\sim \cN(0,1)}{\sigma_\tau(\norm{\vw}_2x_1)^2(x_1^2v_1^2+x_2^2v_2^2+2x_1x_2v_1v_2)}\\
&= \Ee{x_1 \sim \cN(0,1)}{\sigma_\tau(\norm{\vw}_2x_1)^2(x_1^2v_1^2+v_2^2)}
\end{align*}
Now we apply the Jensen's inequality and use the notation $c = \tau\norm\vw_2 \leq \tau\cdot m_r$ where we define $m_r = \max_{\vw \in \cB(\vwo,r)}\ \norm\vw_2 \leq \norm\vwo_2 + r$, to get
\begin{align*}
\Ee{x_1 \sim \cN(0,1)}{\sigma_\tau(\norm{\vw}_2x_1)^2(x_1^2v_1^2)}
&\geq \exp\br{\E{2\ln\sigma_\tau(\norm{\vw}_2x_1) + \ln(x_1^2v_1^2)}}\\
&= \exp\br{\Ee{x}{-2\ln (1 + \exp(-cx))+ \ln(x^2v_1^2)}}\\
&\geq \exp\br{-2\ln (1 + \E{\exp(-cx)})+ \E{\ln(x^2v_1^2)}}\\
&= \exp\br{\ln\frac{1}{(1+ \exp(c^2/2))^2}+ \ln v_1^2 - 4}\\
&= \frac{v_1^2}{e^4}\frac{1}{(1+\exp(c^2/2))^2},
\end{align*}
where in the fifth step we used $\E{\ln(x^2)} \geq -4$. The second part corresponding to $v_2^2$ is identical but for the $e^4$ term. Combining as before and choosing the smaller term (corresponding to $v_1^2$) and using $v_1^2+v_2^2=1$ gives us
\[
\Ee{\vx \sim \cN(\vzero, I_d)}{\sigma_\tau(\vx^\top\vw)^2\ip{\vx}{\vv}^2}\geq \frac1{e^4}\frac1{(1+\exp(c^2/2))^2}
\]
Putting these results together give us the following lower bound on the expectation of interest
\[
\Ee{\vx \sim \cN(\vzero, I)}{s\ip{\vx}{\vv}^2} \geq \lambda \equiv 2\br{\frac1{e^4}\frac1{(1+\exp(c^2/2))^2} - \frac{\tau\cdot r}{\sqrt{2\pi}}}
\]
We can similarly obtain the upper bound as well. This concludes the expectation analysis for the softplus activation function.

\subsection{Steps 1 and 2: Leaky Softplus Activation}
\label{app:step12-leaky-softplus}
Let us denote the leaky softplus activation with the letter $\lrelu$ (since it is a soft counterpart to the \emph{leaky} ReLU activation). We will use $y^\tau_i := \lrelu_\tau(\vx_i^\top\vwo) = \lrelu_\tau(\lrelu^{-1}(y_i))$ to denote the predictions by the gold model $\vwo$ (recall that we are working in the noiseless setting here) and $\lrelu^\tau_i := \lrelu_\tau(\vx_i^\top\vw)$ to to denote the predictions by model $\vw$ with the $\tau$-graduated leaky softplus $\lrelu$ activation. We note that the leaky softplus has a parameter $k \in [0,1]$ to decide its \emph{leakiness} (much the same way the leaky ReLU does too).

It turns out that the derivatives of the (graduated) leaky softplus activation are related to those of the (graduated) sigmoid activation. Thus, we will continue to use $\sigma^\tau_i := \sigma_\tau(\vx_i^\top\vw)$ to to denote the predictions by model $\vw$ with the $\tau$-graduated sigmoid $\sigma$ activation. Then straightforward calculations show that
\begin{align*}
	\nabla_\vw\cL_\tau(\vw) &= \frac2n\sum_{i=1}^n(\lrelu^\tau_i - y^\tau_i)(\br{\sigma^\tau_i + k(1-\sigma^{k\tau}_i)}^2)\cdot\vx_i\\
	\nabla^2_\vw\cL_\tau(\vw) &= \frac1n\sum s_i\cdot\vx_i\vx_i^\top,
\end{align*}
where $s_i = 2\br{\underbrace{\br{\sigma^\tau_i + k(1-\sigma^{k\tau}_i)}^2}_{(A)} + \underbrace{\tau(\lrelu^\tau_i - y^\tau_i)\br{\sigma^\tau_i - (\sigma^\tau_i)^2 - k^2\br{\sigma^{k\tau}_i - (\sigma^{k\tau}_i)^2}}}_{(B)}}$. As before, the term $(A)$ is always strictly positive but the second term $(B)$ could be positive or negative. However, we can similarly bound
\begin{align*}
	\abs{(B)} &\leq \frac\tau4\abs{\lrelu^\tau_i - y^\tau_i}\\
	&\leq \frac\tau4\abs{\vx_i^\top(\vw - \vwo)}
\end{align*}
where the first step follow since we always have $\br{\sigma^\tau_i - (\sigma^\tau_i)^2} \leq \frac14$ as well as $\br{\sigma^{k\tau}_i - (\sigma^{k\tau}_i)^2} \leq \frac14$ and $k \leq 1$, the second step follows since the $\tau$-graduated leaky softplus function is $1$-Lipschitz. However, upto the multiplicative constants, we have already bounded the contribution of this term above as $\E{\abs{\vx_i^\top(\vw - \vwo)}\ip{\vx_i}{\vv}^2} \leq r\sqrt{\frac2\pi}(2v_1^2 + v_2^2) \leq 2r\sqrt{\frac2\pi}$ which completes the upper bound on the contribution due to $(B)$.

To bound the contribution due to $(A)$, we use the handy lower bound
\[
\br{\sigma^\tau_i + k(1-\sigma^{k\tau}_i)}^2 \geq \br{\sigma^\tau_i}^2 + k^2\br{(1-\sigma^{k\tau}_i)}^2,
\]
which holds since both terms are non-negative. We notice that we have already bounded the contribution due to the first term, namely $\br{\sigma^\tau_i}^2$, in our analysis of the softplus activation function in Appendix~\ref{app:step12-softplus}. Specifically, we have
\[
\Ee{\vx \sim \cN(\vzero, I_d)}{\sigma_\tau(\vx^\top\vw)^2\ip{\vx}{\vv}^2}\geq \frac1{e^4}\frac1{(1+\exp(c^2/2))^2}
\]
To bound the second term we get, by exploiting rotational symmetry as before,
\begin{align*}
\Ee{\vx \sim \cN(\vzero, I_d)}{(1-\sigma_{k\tau}(\vw^\top\vx))^2\ip{\vx}{\vv}^2}
&= \Ee{x_1,x_2\sim \cN(0,1)}{(1-\sigma_{k\tau}(\norm{\vw}_2x_1))^2(x_1^2v_1^2+x_2^2v_2^2+2x_1x_2v_1v_2)}\\
&= \Ee{x_1 \sim \cN(0,1)}{(1-\sigma_{k\tau}(\norm{\vw}_2x_1))^2(x_1^2v_1^2+v_2^2)}
\end{align*}
Now we apply the Jensen's inequality and use the notation $c = k\tau\norm\vw_2 \leq k\tau\cdot m_r$ where we define $m_r = \max_{\vw \in \cB(\vwo,r)}\ \norm\vw_2 \leq \norm\vwo_2 + r$, to get
\begin{align*}
\Ee{x_1 \sim \cN(0,1)}{(1-\sigma_{k\tau}(\norm{\vw}_2x_1))^2(x_1^2v_1^2+v_2^2)}
&\geq \exp\br{\E{2\ln(1-\sigma_{k\tau}(\norm{\vw}_2x_1)) + \ln(x_1^2v_1^2)}}\\
&= \exp\br{\E{2\ln\frac{\exp(-c\cdot x_1)}{1+\exp(-c\cdot x_1)} + \ln(x_1^2v_1^2)}}\\
&= \exp\br{\E{-2c\cdot x_1 - 2\ln(1+\exp(-c\cdot x_1)) + \ln(x_1^2v_1^2)}}\\
&\geq \exp\br{\log\frac1{(1 + \exp(-c^2/2))^2} - 4 + \ln(v_1^2)}\\
&= \frac{v_1^2}{e^4}\frac{1}{(1+\exp(c^2/2))^2},
\end{align*}
where in the fifth step we used $\E{\ln(x^2)} \geq -4$. The second part corresponding to $v_2^2$ is identical but for the $e^4$ term. Combining as before and choosing the smaller term (corresponding to $v_1^2$) and using $v_1^2+v_2^2=1$ gives us
\[
\Ee{\vx \sim \cN(\vzero, I_d)}{(1-\sigma_{k\tau}(\vx^\top\vw))^2\ip{\vx}{\vv}^2}\geq \frac1{e^4}\frac1{(1+\exp(c^2/2))^2}
\]
Putting these results together give us the following lower bound on the expectation of interest
\[
\Ee{\vx \sim \cN(\vzero, I)}{s\ip{\vx}{\vv}^2} \geq \lambda \equiv 2\br{\frac1{e^4}\frac{1+k^2}{(1+\exp(c^2/2))^2} - \frac{\tau\cdot r}{\sqrt{2\pi}}}
\]
We can similarly obtain the upper bound as well. This concludes the expectation analysis for the leaky softplus activation function.

\subsection{Step 3}
\label{app:step3}
In this step we convert the expected bounds into high-confidence bounds. First we establish that for any fixed unit vector $\vv \in S^{d-1}$ the random variable $s\cdot\ip{\vx}{\vv}^2$ is subexponential to allow us to apply Bernstein-style tail bounds. To see this, notice that for the Gaussian case (the (leaky) softplus case is handled separately below), we have $s \leq 4\tau^2 \leq 4$ since the sigmoid activation always takes values in the range $[0,1]$ and $\tau \in [0,1]$. Moreover, since $\vx \sim \cN(\vzero, I_d)$, standard results \cite{Vershynin2018} tell us that $\abs{\ip{\vx}{\vv}}$ is $1$-subGaussian. This implies that the random variable $\sqrt{\abs s}\cdot\abs{\ip{\vx}{\vv}}$ is at most $2$-subGaussian. Standard results now tell us that the random variable $s\cdot\ip{\vx}{\vv}^2$ must be at most $4$-subexponential. We now apply the Bernstein bound below with $\lambda$ (resp. $\Lambda$) being the lower (resp. upper) bound on the expectation $\E{s\cdot\ip{\vx}{\vv}^2}$ calculated in Appendices~\ref{app:step12-sigmoid}, \ref{app:step12-softplus}, and \ref{app:step12-leaky-softplus} above. 
\[
\P{\frac1n\sum_{i=1}^ns_i\cdot\ip{\vx_i}{\vv}^2 \leq \lambda - \epsilon} \leq \exp\br{-\frac{cn\epsilon^2}{16}},
\]
where $c > 0$ is some small universal constant. Taking $\epsilon = 0.001\lambda$ gives us
\[
\P{\frac1n\sum_{i=1}^ns_i\cdot\ip{\vx_i}{\vv}^2 \leq 0.999\lambda} \leq \exp\br{-\frac{cn(0.001\lambda)^2}{16}} = \exp(-\Om n).
\]
Doing a similar calculation to upper bound the summation $\frac1n\sum_{i=1}^ns_i\cdot\ip{\vx_i}{\vv}^2$ by $1.001\Lambda$ finishes this step. We now address the case of (leaky) softplus activation where the activation function itself does not take bounded values. Even in those cases, we notice that the analyses in Appendices~\ref{app:step12-softplus} and \ref{app:step12-leaky-softplus} tells us that
\[
\abs{s_i} \leq \bigO{1 + \tau\cdot r\abs{\vx_i^\top\vdelta}},
\]
where $\vdelta \in S^{d-1}$ is a unit vector in the direction of $\vw - \vwo$. Since $\vw, \vwo$ are fixed for now (we will take a union bound over them later), we can take $\vdelta$ to be a fixed (but otherwise arbitrary) unit vector. Now, it is easy to see that
\[
\Ee{\vx \sim \cN(\vzero, I_d)}{\br{\vx^\top\vdelta}^2} = 1
\]
As argued before, $\vx^\top\vdelta$ is a $1$-subGaussian variable since $\vx \sim \cN(\vzero, I_d)$ and hence $\br{\vx^\top\vdelta}^2$ is $1$-sub-exponential. Bernstein bounds then tell us that for any $t \geq \Om1$ we have
\[
\P{\br{\vx^\top\vdelta}^2 > t} \leq \exp\br{-\Om t}
\]
Taking $t = \sqrt[3]n$ and a union bound over all $i \in [n]$ gives us
\[
\P{\exists i \in [n]: \br{\vx_i^\top\vdelta}^2 > \Om{\sqrt[3]n}} \leq n\exp\br{-\Om{\sqrt[3]n}}
\]
This means that with probability at least $1 - \exp(-\softOm{\sqrt n})$, we have $\abs{s_i} \leq \bigO{\sqrt[6]n}$ simultaneously for all $i \in [n]$. This means that we can now take $s\cdot\ip{\vx}{\vv}^2$ to be at most $\bigO{\sqrt[3]n}$-subexponential. Applying the Bernstein inequality now tells us that
\[
\P{\frac1n\sum_{i=1}^ns_i\cdot\ip{\vx_i}{\vv}^2 \leq 0.999\lambda} \leq \exp\br{-\Om{\frac n{(\sqrt[3]n)^2}}} = \exp\br{-\Om{\sqrt[3]n}}.
\]
Thus, we retrieve essentially the same result but with diminished confidence even for the (leaky) softplus activations.

\subsection{Step 4}
\label{app:step4}
The previous results all held for a fixed, but otherwise arbitrary unit vector $\vv \in S^{d-1}$. We now extend this to establish bounds on the eigenvalues of the matrix $A = \frac1n\sum_{i=1}^ns_i\cdot\vx_i\vx_i^\top$. For any square symmetric matrix $B \in \bR^{d \times d}$, \cite[Lemma 4.4.1, Exercise 4.4.3(2)]{Vershynin2018} tells us that
\[
\norm B_2 \leq 2 \sup_{\vv \in \cN_{1/4}}\ \norm{\vv^\top B\vv}_2,
\]
where $\cN_{1/4}$ is a $\frac14$-net over the unit sphere $S^{d-1}$ that contains at most $9^d$ elements \cite[Corollary 4.2.13]{Vershynin2018}. Now, the results from Appendix~\ref{app:step3} tell us that for any fixed $\vv \in S^{d-1}$, we have
\[
\P{\frac1n\sum_{i=1}^ns_i\cdot\ip{\vx_i}{\vv}^2 \geq 1.001\Lambda} \leq \exp(-\Om n).
\]
Taking a union bound over all $\vv \in \cN_{\frac14}$ tells us that
\[
\P{\exists \vv \in \cN_{1/4}: \frac1n\sum_{i=1}^ns_i\cdot\ip{\vx_i}{\vv}^2 \geq 1.001\Lambda} \leq 9^d\cdot\exp(-\Om n) \leq \exp(-\Om n),
\]
for $n \geq \Om d$. This result, if we set $B = A - \Lambda\cdot I_d$, can be interpreted as saying that
\[
\sup_{\vv \in \cN_{1/4}}\ \norm{\vv^\top B\vv}_2 \leq 0.001\Lambda
\]
This gives us with confidence at least $1 - \exp(-\Om n)$,
\[
\norm{A - \Lambda\cdot I_d}_2 \leq 0.002\Lambda,
\]
or in other words, applying the triangle inequality $\norm{A - \Lambda\cdot I_d}_2 \geq \norm A_2 - \Lambda$, with the same confidence $1 - \exp(-\Om n)$,
\[
\lambda_{\max}\br{\frac1n\sum_{i=1}^ns_i\cdot\vx_i\vx_i^\top} \leq 1.002\Lambda,
\]
which concludes the high confidence upper bound on the eigenvalues of the Hessian $A = \nabla^2_\vw\cL_\tau(\vw)$. For the (leaky) softplus activations, a similar result holds, just with diminished confidence $1 - \exp\br{-\Om{\sqrt[3]n}}$.

Now, we move on to establish a high confidence lower bound on the eigenvalues of the Hessian $A = \nabla^2_\vw\cL_\tau(\vw)$. Let us establish an $\epsilon$-net $\cN_\epsilon$ over the unit sphere $S^{d-1}$. Such a net has atmost $\br{1+\frac2\epsilon}^d$ elements \cite[Corollary 4.2.13]{Vershynin2018}. For any $\vv \in S^{d-1}$, we use $\vc$ to denote its closest net element i.e. $\norm{\vv - \vc}_2 \leq \epsilon$. We now note the following chain of inequalities
\[
\lambda_{\min}(A) = \inf_{\vv \in S^{d-1}}\ \vv^\top A\vv = \vc^\top A\vc + 2(\vv - \vc)^\top A\vv + (\vv - \vc)^\top A(\vv - \vc) \geq \vc^\top A\vc - 2\epsilon\norm A_2 \geq \inf_{\vc \in \cN_\epsilon}\ \vc^\top A\vc - 2\epsilon\norm A_2,
\]
where we used the Cauchy-Schwartz inequality to get $\abs{(\vv - \vc)^\top A\vv} \leq \epsilon\cdot\norm A_2$ since $\norm\vv_2 = 1$, the fact that $\norm{\vv - \vc}_2 \leq \epsilon$ by construction, and $(\vv - \vc)^\top A(\vv - \vc) \geq 0$ since $A$ is always positive semi-definite. We use the upper bound on $\norm A_2 \leq 1.002\cdot\Lambda$ obtained above and set $\epsilon = \frac{0.001\lambda}{2.004\Lambda}$ where $\lambda, \Lambda$ are the upper and lower bounds on the expectations obtained in Appendices~\ref{app:step12-sigmoid}, \ref{app:step12-softplus}, and \ref{app:step12-leaky-softplus} above to get
\[
\lambda_{\min}(A) \geq \inf_{\vc \in \cN_\epsilon}\ \vc^\top A\vc - 0.001\lambda
\]
Now, the results from Appendix~\ref{app:step3} tell us that for any fixed $\vv \in S^{d-1}$, we have
\[
\P{\frac1n\sum_{i=1}^ns_i\cdot\ip{\vx_i}{\vv}^2 \leq .999\lambda} \leq \exp(-\Om n).
\]
Taking a union bound over all $\vv \in \cN_\epsilon$ tells us that
\[
\P{\exists \vv \in \cN_\epsilon: \frac1n\sum_{i=1}^ns_i\cdot\ip{\vx_i}{\vv}^2 \leq 0.999\lambda} \leq \br{\frac{2005\cdot\Lambda}\lambda}^d\cdot\exp(-\Om n) \leq \exp(-\Om n),
\]
for $n \geq \Om d$. This tells us that with confidence at least $1 - \exp(-\Om n)$, we have
\[
\lambda_{\min}\br{\frac1n\sum_{i=1}^ns_i\cdot\vx_i\vx_i^\top} \geq 0.998\lambda,
\]
which finishes the analysis. As before, for the (leaky) softplus activations, a similar result holds, just with diminished confidence $1 - \exp\br{-\Om{\sqrt[3]n}}$.

\subsection{Step 5}
\label{app:step5}
The results on the upper and lower bounds of the eigenvalues of the Hessian obtained above in Appendix~\ref{app:step4} hold for a fixed model $\vw$. We now show that essentially the same result holds uniformly over all models $\vw \in \cB(\vwo, r)$ for any $r \in [0,R]$. Let $\vw, \tvw \in \cB(\vwo, r)$ be any two models such that $\norm{\vw-\tvw}_2 \leq \varepsilon$ and let $s_i, \ts_i$ be the weights corresponding to these models so that
\begin{align*}
	A &\deff \nabla_\vw\cL_\tau(\vw) = \frac1n\sum_{i=1}^ns_i\cdot\vx_i\vx_i^\top\\
	\tilde A &\deff \nabla_\vw\cL_\tau(\tvw) = \frac1n\sum_{i=1}^n\ts_i\cdot\vx_i\vx_i^\top
\end{align*}
Now, we notice that using similar analyses as done in Appendices~\ref{app:step12-sigmoid}, \ref{app:step12-softplus}, and \ref{app:step12-leaky-softplus}, by utilizing that our activation functions are Lipschitz, we can show that
\[
\abs{s_i - \ts_i} \leq \bigO{\abs{(\vw-\tvw)^\top\vx_i}} \leq \epsilon R_X
\]
where we let $R_X \deff \max_{i\in[n]}\norm{\vx_i}_2$. This gives us, for any unit vector $\vv \in S^{d-1}$
\begin{align*}
	\abs{\vv^\top\nabla_\vw\cL_\tau(\vw)\vv - \vv^\top\nabla_\vw\cL_\tau(\tvw)\vv} &= \abs{\frac1n\sum_{i=1}^n(s_i-\ts_i)\cdot\ip{\vx_i}\vv^2}\\
	&\leq \frac1n\sum_{i=1}^n\abs{s_i-\ts_i}\cdot\ip{\vx_i}\vv^2\\
	&\leq \varepsilon R_X\cdot\frac1n\sum_{i=1}^n \ip{\vx_i}\vv^2\\
	&\leq \varepsilon R_X\cdot\frac1n\lambda_{\max}(X^\top X)
\end{align*}
where $X \in \bR^{n \times d}$ is the matrix of covariates stacked together. Standard results for instance \cite[Lemma 14]{BhatiaJK2015} tell us that with confidence at least $1 - \exp(-\Om n)$, we have $\frac1n\lambda_{\max}(X^\top X) \leq 5$. An application of the triangle inequality as done in Appendix~\ref{app:step4} to bound the eigenvalues of matrices now tells us that whenever $\norm{\vw-\tvw}_2 \leq \epsilon$, we always have
\begin{align*}
	\abs{\lambda_{\min}(A) - \lambda_{\min}(\tilde A)} &\leq 5\varepsilon R_X\\
	\abs{\lambda_{\max}(A) - \lambda_{\max}(\tilde A)} &\leq 5\varepsilon R_X
\end{align*}
Now, suppose $\lambda_{\min}(A) \geq 0.998\lambda$ and $\Lambda_{\min}(A) \leq 1.002\Lambda$, then setting $\varepsilon = \frac{0.001\lambda}{5R_x}$ tells us that $\lambda_{\min}(\tilde A) \geq 0.997\lambda$ and $\Lambda_{\min}(\tilde A) \leq 1.003\Lambda$ since $\Lambda \geq \lambda$.

Now, the results of Appendix~\ref{app:step4} tell us that for any fixed $\vw \in \cB(\vwo, r)$, we have
\[
\P{\lambda_{\min}\br{\nabla_\vw\cL_\tau(\vw)} \leq 0.998\lambda} \leq \exp(-\Om n)
\]
We now establish an $\varepsilon$-net $\cC_\varepsilon$ over the ball $\cB(\vwo, r)$ for $\varepsilon = \frac{0.001\lambda}{5R_x}$ and ensure that the eigenvalue bound holds for all $\br{1 + \frac r\varepsilon}^d$ elements of the net to get
\[
\P{\exists \vw \in \cC_\varepsilon: \lambda_{\min}\br{\nabla_\vw\cL_\tau(\vw)} \leq 0.998\lambda} \leq \br{1 + \frac r\varepsilon}^d\exp(-\Om n)
\]
To analyze the confidence term, we use standard results, for example \cite[Lemma 10]{MukhotyGJK2019}, that tell us that with confidence at least $1 - \delta$, we have $R_X \leq \bigO{\sqrt{d + \ln\frac n\delta}}$ i.e. whenever $n \geq \Om d$, with confidence at least $1 - \exp{-\Om n}$, we have $R_X \leq n$. This gives us $\varepsilon = \frac{0.001\lambda}{5n}$. This in turn gives us the following confidence bound
\[
\br{1 + \frac r\varepsilon}^d\exp(-\Om n) \leq \exp\br{-\Om{n - d\ln\frac{rn}\lambda}} \leq \exp\br{-\Om{n - d\ln n}} \leq \exp(-\Om n),
\]
whenever $n \geq \Om{d \ln d}$ (which ensures, say $\frac n{\ln n} \geq \Om d$). Combined with the analysis above along with the setting of $\varepsilon$ and the fact that elements of $\cC$ $\varepsilon$-cover the entire ball $\cB(\vwo, r)$ gives us
\[
\P{\exists \vw \in \cB(\vwo, r): \lambda_{\min}\br{\nabla_\vw\cL_\tau(\vw)} \leq 0.997\lambda} \leq \exp(-\Om n)
\]
A similar analysis holds for the upper eigenvalue bound as well
\[
\P{\exists \vw \in \cB(\vwo, r): \lambda_{\max}\br{\nabla_\vw\cL_\tau(\vw)} \geq 1.003\Lambda} \leq \exp(-\Om n)
\]
This finishes the argument.

\subsection{Step 6}
\label{app:step6}
The above results hold for fixed values of $r \in [0,R]$ and $\tau \in [0,1]$ but can be readily made uniform in them as well using a very similar technique as used in Appendix~\ref{app:step5}. We exploit the fact that the weights $s_i$ are Lipschitz in their dependence on the $\tau$ parameter, then set up an $\epsilon$-net over the interval $[0,1]$ and take a union bound. Taking $\epsilon = \frac1{n^{\bigO1}}$ ensures that the additional error incurred is negligible (for example, of the order of $0.001\lambda$ and $0.001\Lambda$ for the lower and upper eigenvalue bounds, respectively), as well as that the net size is manageable i.e. $n^{\bigO1}$ so that upon taking a union bound, we are left with confidence
\[
n^{\bigO1}\exp(-\Om n) \leq \exp(-\Om{n - \ln n}) \exp(-\Om n),
\]
for large enough $n \geq \Om{d\ln d}$. A similar argument holds for uniformity over the choice of $r$ as well. All together, these give us the final result as follows: 
\begin{align*}
\P{\exists r \in [0, R], \tau \in [0, 1], \vw \in \cB(\vwo, r): \lambda_{\min}\br{\nabla_\vw\cL_\tau(\vw)} \leq 0.996\lambda} &\leq \exp(-\Om n)\\
\P{\exists r \in [0, R], \tau \in [0, 1], \vw \in \cB(\vwo, r): \lambda_{\max}\br{\nabla_\vw\cL_\tau(\vw)} \geq 1.004\Lambda} &\leq \exp(-\Om n),
\end{align*}
that establishes Lemma~\ref{lem:elsc-elss}.

\begin{algorithm}[t]
	\begin{algorithmic}[1]
		{
		\REQUIRE Training data $\bc{(\vx_i,y_i}_{i=1}^n$, activation function $\phi$, initial temperature $\tau_0$, temperature increments $\beta_t > 1$, step lengths $\eta_t$
		\ENSURE An estimate $\hvw$ of the gold model $\vwo$
		\STATE Initialize $\vw_0$ and set $t \leftarrow 0$ \COMMENT{For example $\vw_0 \leftarrow \vzero$}
		\FOR{$t = 0, 1, 2, \ldots, T-1$}
			\STATE Obtain graduated labels $y^{\tau_t}_i = \phi_{\tau_t}(\phi^{-1}(y_i))$
			\STATE Construct the graduated objective function $\displaystyle \cL_{\tau_t}(\vw) = \frac1n\sum_{i=1}^n(y^{\tau_t}_i - \phi_{\tau_t}(\vx_i^\top\vw))^2$
			\STATE Rename $\vu^t_0 \leftarrow \vwt$
			\FOR{$i = 0, 1, 2, \ldots, I_t$}
				\STATE Obtain a stochastic gradient $\vg^t_i$ of the objective $\cL_{\tau_t}$ at the model $\vui$ i.e. $\E{\vg^t_i} = \left.\nabla_{\vu}\cL_{\tau_t}(\vu)\right|_{\vui}$
				\STATE Let $\vun \leftarrow \vui - \eta_t\cdot\vg^t_i$
				\STATE $i \leftarrow i + 1$
			\ENDFOR
			\STATE Rename $\vwn \leftarrow \vu^t_{I_t}$
			\STATE $\tau_{t+1} \leftarrow \min\bc{\beta_t\cdot\tau_t,1}$
			\STATE $t \leftarrow t + 1$
		\ENDFOR
		\STATE \textbf{return} {$\vw^T$}
		}
	\end{algorithmic}
	\caption{\alg-SGD Pseudocode}
	\label{algo:sgd}
\end{algorithm}

\section{Convergence Analysis for \alg with Stochastic Updates}
\label{app:app-stochastic}

Theorem~\ref{thm:conv-main-sigmoid} in the main paper offered a convergence analysis of the \alg variant that uses gradient descent in Algorithm~\ref{algo:main}. Theorem~\ref{thm:conv-sgd-sigmoid} below extends this to the SGD variant of \alg, presented in detail in Algorithm~\ref{algo:sgd}. This variant runs for $T$ epochs, with the $t\nth$ epoch lasting $I_t$ iterations with each iteration executing a single stochastic gradient descent step.

To analyze this variant, we need to make the following additional assumption:
\begin{assumption}
\label{asm:bound-variance}
For all $\tau \in [0,1]$ and $r \in [0,R]$,
\begin{enumerate}
	\item There exists a bound $B_r$ on the objective value i.e. $\cL_\tau(\vw) \leq B_r$ for all $\vw \in \cB(\vwo,r)$. However, to reduce notational clutter, we will use a \emph{uniform bound} i.e. fix $B \deff \sup_{r \in [0,R]}\ B_r$
	\item There exists some constant $s^2_r \geq 0$ such that for any $\vw \in \cB(\vwo,r)$, if $\vg_\tau(\vw)$ denotes a stochastic gradient for $\cL_\tau(\vw)$, then we have
\[
\E{\norm{\vg_\tau(\vw)}_2^2} \leq s^2_r + \norm{\nabla\cL_\tau(\vw)}_2^2
\]
\end{enumerate}
\end{assumption}
The quantity $s^2$ captures the noise variance introduced into the gradients due to stochasticity and becomes more promiment as $\vw$ approaches $\vwo$. For instance, at $\vwo$ we have $\nabla\cL_\tau(\vwo) = \vzero$ but $\E{\norm{\vg_\tau(\vw)}_2^2}$ is still strictly nonzero justifying the need to introduce this quantity. If we are performing gradient descent, then $s^2$ vanishes as there is no more noise in the gradient. This term is usually seen as preventing SGD from converging to models arbitrarily close to $\vwo$. However, steps can be taken to ameliorate this e.g. by using smaller/diminishing step-lengths, mini-batches with larger batch sizes, or by directly performing variance reducing steps such as SVRG \cite{JohnsonZhang2013}, SAGA \cite{DefazioBL-J2014}. For instance, in Figure~\ref{fig:conv}(a), and also Figures~\ref{fig:act_conv}(a,b,c), \alg-SGD does converge to models extremely close to the gold model.
\begin{theorem}[\alg-SGD Convergence]
\label{thm:conv-sgd-sigmoid}
Suppose Assumption~\ref{asm:bound-variance} is satisfied and let $\delta > 0$ be any confidence parameter. Let Algorithm~\ref{algo:main} is executed with stochastic gradient descent steps with temperature increment parameter set to some constant value $\beta > 1$, step length for epoch $t$ set to $\eta_t = \min\bc{\frac{r_t^2\lambda^2_{r_t}}{2\beta^2\Lambda_{r_t}s_{r_t}^2},\frac1{\Lambda_{r_t}}}$, and epoch $t$ run for $I_t \geq 2\ln\frac T\delta\br{1 + \frac1{\eta_t\lambda_{r_t}}\ln\frac{2B\lambda_{r_t}}{\eta_t\Lambda_{r_t}s_{r_t}^2}}^2$ iterations, where $\lambda_{r_t}, \Lambda_{r_t}$ are the ELSC/ELSS parameter corresponding to the $\tau_t$-graduated objective. Then with probability at least $1- \delta$ over the randomness in the stochastic gradients, we have $\norm{\vw_T - \vwo}_2^2 \leq \br{\frac1{\beta^2}}^T\cdot\norm{\vw_0-\vwo}_2^2$. In particular, for the case of the sigmoid activation, we may use $\eta_t = \bigO{\frac{R\exp(-R)}{\beta^{2t}}}$ and run epoch $t$ for $I_t \equiv I \leq \bigO{\frac{\beta^{4t}\exp(3R)}{R^4}\ln\frac T\delta}$ iterations.
\end{theorem}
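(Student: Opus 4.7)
The plan is to mirror the inductive argument of Theorem~\ref{thm:conv-main-sigmoid} while internalizing a full SGD phase inside each outer epoch. Setting $r_t = r_0/\beta^t$, I would maintain the invariant that after epoch $t$, with probability at least $1 - t\delta/T$ the iterate satisfies $\norm{\vwt - \vwo}_2 \leq r_t$. The base case is ensured by initialization and the tuning of $\tau_0$. The inductive step proceeds by conditioning on the good event so far; Lemma~\ref{lem:sigmoid-elsc} then guarantees that $\cL_{\tau_t}$ is $\lambda_{r_t}$-strongly convex and $\Lambda_{r_t}$-strongly smooth throughout the region $\cB(\vwo, r_t)$ visited by the inner iterates $\vu^t_0, \vu^t_1, \ldots, \vu^t_{I_t}$, so that classical SGD analysis applies locally.

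For the within-epoch analysis I would unroll the standard SGD one-step recursion for strongly convex and smooth losses. Expanding $\norm{\vun - \vwo}_2^2 = \norm{\vui - \eta_t \vg^t_i - \vwo}_2^2$ and taking conditional expectation, strong convexity lower-bounds the inner product $\ip{\nabla\cL_{\tau_t}(\vui)}{\vui - \vwo}$ by $\lambda_{r_t}\norm{\vui - \vwo}_2^2 + \cL_{\tau_t}(\vui) - \cL_{\tau_t}(\vwo)$, while Assumption~\ref{asm:bound-variance} combined with the smoothness identity $\norm{\nabla\cL_{\tau_t}(\vui)}_2^2 \leq 2\Lambda_{r_t}(\cL_{\tau_t}(\vui) - \cL_{\tau_t}(\vwo))$ upper-bounds $\E{\norm{\vg^t_i}_2^2}$. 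With $\eta_t \leq 1/\Lambda_{r_t}$ the function-value excess terms cancel, leaving the clean contraction $\E{\norm{\vun - \vwo}_2^2 \mid \vui} \leq (1 - \eta_t\lambda_{r_t})\norm{\vui - \vwo}_2^2 + \eta_t^2 s_{r_t}^2$. Iterating over $I_t$ inner steps yields the familiar bias--variance decomposition
\[
\E{\norm{\vu^t_{I_t} - \vwo}_2^2} \leq \exp(-\eta_t\lambda_{r_t}I_t)\cdot\norm{\vwt - \vwo}_2^2 + \frac{\eta_t s_{r_t}^2}{\lambda_{r_t}}.
\]
The stated choice $\eta_t \leq r_t^2\lambda_{r_t}^2/(2\beta^2\Lambda_{r_t}s_{r_t}^2)$ drives the residual noise term below $r_t^2/(2\beta^2)$, and choosing $I_t$ proportional to $\eta_t^{-1}\lambda_{r_t}^{-1}\ln\bigl(B\lambda_{r_t}/(\eta_t\Lambda_{r_t}s_{r_t}^2)\bigr)$ drives the bias term below the same threshold, using the uniform objective bound $B$ from Assumption~\ref{asm:bound-variance} to initialize the logarithm at the worst case.

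The main obstacle will be lifting this in-expectation contraction to the claimed high-probability bound with its $\ln^2(T/\delta)$ dependence in $I_t$. My plan there is to apply a Freedman-type martingale concentration inequality to the sequence $\{\norm{\vui - \vwo}_2^2\}$, using $B$ together with smoothness to control both the per-step martingale differences and the predictable quadratic variation; the square of the logarithm emerges when inverting the Freedman tail in the squared-distance scale, effectively paying a $\ln(T/\delta)$ factor once to survive the tail and another inside the quadratic variation bound. Once we establish $\P{\norm{\vu^t_{I_t} - \vwo}_2 > r_t/\beta} \leq \delta/T$ per epoch, a union bound over the $T$ epochs closes the induction and delivers $\norm{\vw_T - \vwo}_2^2 \leq \beta^{-2T}\norm{\vw_0 - \vwo}_2^2$ as claimed. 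Substituting the sigmoid-specific constants $\lambda_{r_t} \geq \tau_t^2\exp(-R)/2$ and $\Lambda_{r_t} \leq \tau_t^2(1+R)$ from Lemma~\ref{lem:sigmoid-elsc}, together with $\tau_t = \tau_0\beta^t$, then specializes these generic formulas to the explicit $\eta_t = \bigO{R\exp(-R)/\beta^{2t}}$ and $I_t = \bigO{\beta^{4t}\exp(3R)R^{-4}\ln(T/\delta)}$ expressions stated in the theorem.
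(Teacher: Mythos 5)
Your overall architecture matches the paper's: an epoch-wise induction maintaining $\norm{\vwt-\vwo}_2\leq r_t$ with $r_t = r_0/\beta^t$, a within-epoch SGD contraction, a step-length choice that pushes the noise floor below $(r_t/\beta)^2$, an epoch length that kills the bias term, and a union bound over $T$ epochs. Your in-expectation analysis is essentially equivalent to the paper's, which works on the loss scale via \cite[Theorem 4.6]{BottouCN2018} (obtaining $\E{\cond{\cL_{\tau_t}(\vun)-\epsilon_t}{\vui}} \leq (1-\eta_t\lambda_{r_t})(\cL_{\tau_t}(\vui)-\epsilon_t)$ with $\epsilon_t = \eta_t\Lambda_{r_t}s_{r_t}^2/(2\lambda_{r_t})$, then converts loss to distance at the end of the epoch via strong convexity); your squared-distance recursion is an acceptable alternative route to the same bias--variance split, up to constants.

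The genuine gap is the high-probability step, which is the entire technical content of this theorem beyond Theorem~\ref{thm:conv-main-sigmoid}, and which you defer to an unexecuted ``Freedman-type'' plan. Two problems. First, on the squared-distance (or raw loss) scale the martingale fluctuations do not contract with the iterate: the per-step deviations accumulate as $\sqrt{I_t}$ while the target accuracy is the fixed quantity $r_t^2/\beta^2$, so a direct Azuma/Freedman application on $\bc{\norm{\vui-\vwo}_2^2}$ does not close without a substantially more delicate weighted-martingale argument. The paper's device is to pass to $Z^t_i = \ln\br{\cL_{\tau_t}(\vui)-\epsilon_t} + i\cdot c_t$ with $c_t = \ln\frac1{1-\eta_t\lambda_{r_t}}$: in log scale the drift is \emph{linear} in $i$ while the fluctuation grows as $\sqrt i$, so the drift eventually dominates. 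Making this rigorous requires a stopping time at $\bc{i:\cL_{\tau_t}(\vui)\leq 2\epsilon_t}$ to keep the logarithm's argument bounded away from zero and to obtain the bounded-difference constant $C_t = \ln\frac{B_{r_t}}{\epsilon_t(1-\eta_t\lambda_{r_t})}$; Azuma--Hoeffding then gives success once $I_t \geq 2(C_t/c_t)^2\ln\frac T\delta$. Second, you misread the source of the squared factor in $I_t$: it is not a $\ln^2(T/\delta)$ dependence but a single $\ln\frac T\delta$ multiplied by $(C_t/c_t)^2 \approx \br{1+\frac1{\eta_t\lambda_{r_t}}\ln\frac{2B\lambda_{r_t}}{\eta_t\Lambda_{r_t}s_{r_t}^2}}^2$, i.e.\ the squared ratio of the log-scale range to the per-step log-scale drift. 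Your plan of ``paying a $\ln(T/\delta)$ factor once to survive the tail and another inside the quadratic variation'' is aimed at the wrong quantity and would not reproduce the stated $I_t$. The log-transform plus stopping time plus Azuma--Hoeffding is the missing idea you would need to supply.
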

\begin{proof}
We will adapt a standard proof of SGD convergence for strongly convex and smooth objectives by \cite{BottouCN2018}. Using similar arguments to those used in the proof of Theorem~\ref{thm:conv-main-sigmoid}, Theorem 4.6 from \cite{BottouCN2018} shows that for a choice of step length $\eta_t \leq \frac1{\Lambda_{r_t}}$, we have
\[
\E{\cond{\cL_{\tau_t}(\vun) - \cL_{\tau_t}(\vwo) - \epsilon_t}\vui} \leq (1-\eta_t\lambda_{r_t})\cdot\br{\cL_{\tau_t}(\vui) - \cL_{\tau_t}(\vwo) - \epsilon_t},
\]
where $\epsilon_t = \frac{\eta_t\Lambda_{r_t}s^2_{r_t}}{2\lambda_{r_t}}$. The above requires a minor modification from the proof strategy of \cite{BottouCN2018} -- instead of total expectations, we take a conditional expectation. This modification will help us use martingale inequalities later to offer a high probability bound. Simplifying using $\cL_{\tau_t}(\vwo) = 0$ in the noiseless setting, taking logarithms on both sides and using the Jensen's inequality (to get $\E{\ln(X)} \leq \ln(\E X)$ for any non-negative random variable $X$) gives us
\begin{equation}
\E{\cond{\ln\br{\cL_{\tau_t}(\vun) - \epsilon_t}}\vwt} \leq \ln\br{\cL_{\tau_t}(\vui) - \epsilon_t} - c_t,
\label{eq:super-martingale}
\end{equation}
where $c_t = \ln\br{\frac1{(1-\eta_t\lambda_{r_t})}} > 0$. Note that the above bound runs into problems in case the arguments to the logarithm function are negative or even approach zero. To ensure this never happens, we construct a \emph{stopping time} defined as $\bc{i: \cL_{\tau_t}(\vui) \leq 2\epsilon_t} \wedge I_t$ where we follow standard notation of using the $\wedge$ symbol to denote the minimum operator. This terminates the gradient descent procedure prematurely (i.e. before $I_t$ iterations are over for this epoch) if the loss is small enough since the objectives of training this epoch have already been met.

Let us now up a stochastic process $Z^t_1, Z^t_2, \ldots$ defined as
\[
Z^t_i \deff \ln\br{\cL_{\tau_t}(\vui) - \epsilon_t} + i \cdot c_t
\]
We now establish that this process actually constitutes a bounded-difference super-martingale. To show that it forms a super-martingale, we note that \eqref{eq:super-martingale} tells us that
\begin{align*}
	\E{\cond{Z^t_i}\vup} &= \E{\cond{\ln\br{\cL_{\tau_t}(\vui) - \epsilon_t} + i \cdot c_t}\vup}\\
	&\leq \ln\br{\cL_{\tau_t}(\vup) - \epsilon_t} - c_t + i \cdot c_t\\
	&= \ln\br{\cL_{\tau_t}(\vup) - \epsilon_t} + (i-1)\cdot c_t\\
	&= Z^t_{i-1}
\end{align*}
To establish the bounded differences property, we notice that by construction of the stopping time, we get
\[
\abs{Z^t_i - Z^t_{i-1}} \leq \abs{\ln\br{\cL_{\tau_t}(\vui) - \epsilon_t} - \ln\br{\cL_{\tau_t}(\vup) - \epsilon_t}} + c_t \leq \ln(B_{r_t}) - \ln(\epsilon_t) + \ln\br{\frac1{(1-\eta_t\lambda_{r_t})}} = C_t,
\]
where we define the shorthand $C_t \deff \ln\br{\frac{B_{r_t}}{\epsilon_t(1-\eta_t\lambda_{r_t})}}$. An application of the Azuma-Hoeffding's inequality now tells us that
\[
\P{Z_{I_t} - Z_0 > e} \leq \exp\br{-\frac{e^2}{2\cdot I_t\cdot C_t^2}} \leq \delta,
\]
by setting $e = C_t\sqrt{2I_t\ln\frac1\delta}$. Using the definition of $Z_{I_t}, Z_0$ tells us that with probability at least $1 - \delta$, we have
\[
\ln\br{\cL_{\tau_t}(\vu^t_{I_t}) - \epsilon_t} + {I_t} \cdot c_t - \ln\br{\cL_{\tau_t}(\vu^t_0) - \epsilon_t} \leq C_t\sqrt{2{I_t}\ln\frac1\delta}
\]
We wish to ensure $\cL_{\tau_t}(\vu^t_{I_t}) \leq 2\epsilon_t$ which is true whenever
\[
{I_t} \cdot c_t - C_t\sqrt{2{I_t}\ln\frac1\delta} \geq \ln\br{\frac{\epsilon_t}{\cL_{\tau_t}(\vu^t_0) - \epsilon_t}}
\]
By construction of the stopping time, we are assured that $\cL_{\tau_t}(\vu^t_0) \geq 2\epsilon_t$ which means that $\epsilon_t \leq \cL_{\tau_t}(\vu^t_0) - \epsilon_t$ i.e. we need only ensure
\[
{I_t} \cdot c_t - C_t\sqrt{2I\ln\frac1\delta} \geq 0
\]
This is fulfilled for all ${I_t} \geq \frac{2C_t^2\ln\frac1\delta}{c_t^2}$. Thus, we are assured that for long enough epochs, with high probability, we are assured at the end of the epoch, a model $\vwn = \vu^t_{I_t}$ such that $\cL_{\tau_t}(\vwn) \leq 2\epsilon_t$. Using the strong convexity property assured by ELSC along with $\cL_{\tau_t}(\vwo) = 0$ and $\nabla\cL_{\tau_t}(\vwo) = \vzero$, upon using the definition of the shorthand $\epsilon_t$, tells us that
\[
\norm{\vwn - \vwo}_2^2 \leq \frac{2\cL_{\tau_t}(\vwn)}{\lambda_{r_t}} \leq \frac{4\epsilon_t}{\lambda_{r_t}} = \frac{2\eta_t\Lambda_{r_t}s^2_{r_t}}{\lambda^2_{r_t}}
\]
We had $\norm{\vwt - \vwo}_2 \leq r_t$ and thus, we would really like $\norm{\vwn - \vwo}_2 \leq \frac {r_t}\beta$ so that we may increment our temperature by a factor of $\beta > 1$ at the end of the epoch to effect a linear rate of convergence across epochs as we did in the gradient descent case. Doing so requires us to set
\[
\eta_t \leq \min\bc{\frac{r_t^2\lambda^2_{r_t}}{2\beta^2\Lambda_{r_t}s^2_{r_t}},\frac1{\Lambda_{r_t}}}
\]
Note that since $\tau_t$ increases and $r_t$ decreases exponentially across epochs, the above schedule demands exponentially decreasing step lengths. As noticed earlier, this is unavoidable given the additional noise in the stochastic gradient process. Also, since $I_t$ depends inversely on $c_t^2$ which can be shown to depend on $\eta_t^2$ logarithmically on the inverse of $\epsilon_t$ which itself depends on $\eta_t$ by using $1 - x \leq \exp(-x)$, this implies that epoch lengths $I_t$ will go up exponentially across epochs. Thus we may expect upto $\exp(\bigO{T})$ iterations across the $T$ epochs. Since the overall decrease in error is also exponential, this represents a rate of error decrease that is inversely polynomial in the number of iterations which is known to be optimal\footnote{For instance, see Alekh Agarwal, Peter L. Bartlett, Pradeep Ravikumar, and Martin J. Wainwright. 2012. Information-Theoretic Lower Bounds on the Oracle Complexity of Stochastic Convex Optimization. \emph{IEEE Transactions On Information Theory} 58, 5 (2012), 3235 -- 3249.}. Given the above, we yet again find the algorithm converging to the gold model at a linear rate across epochs. All that remains is to set $\delta = \frac\delta T$ so that we may take a union bound to ensure that all epochs succeed in their Azuma-Hoeffding inequality, clean up bounds by making simplifications, which finishes the generic proof.

For the special case of the sigmoid activation, we have $\lambda_{r_t} \geq \tau_t^2\exp\br{-\frac R2}, \Lambda_{r_t} \leq \tau_t^2(1+R)$ and $s_{r_t}^2 \leq \bigO{\tau_t^2}$. Putting these into the generic expressions and simplifying, keeping in mind that $\tau_t \in [0,1]$ gives us the desired results.
\end{proof}

\section{Convergence Analysis for \alg under Noisy Labels}
\label{app:app-noisy}

We divide this section into several parts to better structure the discussion. Appendix~\ref{app:noise-models} first clarifies the noise models used in the analyses and presents a brief introduction to sub-Gaussian distributions. Appendix~\ref{app:elss-noisy} then discusses how ELSS constants can be established with minor changes in the presence of noisy labels. Appendix~\ref{app:elsc-noisy} shows how to establish ELSC constants under label noise using temperature capping. Appendix~\ref{app:conv-rate-noisy} then uses all these results to present a convergence bound for \alg under noisy labels, effectively establishing Theorem~\ref{thm:conv-noisy-sigmoid}. Finally, Appendix~\ref{app:consistent-noisy} shows how consistent recovery can be assured with low-variance post-activation noise. In all cases, we discuss the analysis and calculations keeping our running example of sigmoid activation but the same extends to other activation functions.

\subsection{Noise Models and sub-Gaussian Distributions}
\label{app:noise-models}
As discussed in Section~\ref{sec:analysis}, we use slightly different noise models in the pre- and post-activation settings.

\noindent\textbf{Pre-activation Noise.} In this case we have $y_i = \sigma(\vx_i^\top\vwo + \epsilon_i)$ where $\epsilon_i \sim \cD(0,\varsigma)$ is noise coming from some mean-centered \emph{sub-Gaussian} distribution. Mean centering ensures that $\E{\epsilon_i} = 0$. Note that even though the noise is unbiased, the noisy labels are still biased as $\E{y_i \cond \vx_i, \vwo} \neq \sigma(\vx_i^\top\vwo)$ as the non-linear activation function $\sigma$ acts on the noise as well. Sub-Gaussian distributions are popular in literature \cite{Vershynin2018} since they encompass a large variety of distributions including all Gaussian distributions, all distributions with bounded support, and admit Hoeffding and Bernstein-style tail bounds. There exist various equivalent definitions of sub-Gaussian distributions (see \cite[Proposition 2.5.2]{Vershynin2018}) and a popular one is as follows: a real-valued random variable $X$ is considered $\varsigma$-subGaussian if the moment generating function of $X$ satisfies the following inequality for all $\lambda \in \bR$
\[
\E{\exp(\lambda X} \leq \exp(\varsigma^2\lambda^2)
\]
Sub-Gaussian distributions also admit other interesting properties that we will invoke as and when required.

\noindent\textbf{Post-activation Noise.} In this case we have $y_i = \sigma(\vx_i^\top\vwo) + \epsilon_i$ where $\epsilon_i \sim \cB(0,\varsigma)$ is some unbiased noise i.e. $\E{\epsilon_i} = 0$. In this case, note that unbiased noise ensures that the noisy labels are unbiased as well since $\E{y_i \cond \vx_i, \vwo} = \sigma(\vx_i^\top\vwo)$ as in this case, the activation function does not act upon the noise. For any $\tau \in [0,1]$ we have $y^\tau_i = \sigma_\tau(\sigma^{-1}(\sigma(\vx_i^\top\vwo) + \epsilon_i))$. Post activation noise presents structural challenges since it may cause labels to violate their legal limits e.g. cause $y_i < 0$ or $y_i > 1$ which does not allow us to apply the inverse operation say $\sigma^{-1}$ in the case of the sigmoid activation function.

There may be several ways of overcoming this problem e.g. considering multiplicative noise instead of additive noise. However, we analyze a simple workaround here to present the essential ideas of the proof technique. The workaround involves normalizing data covariate norms and model norms and restricting the noise support. Specifically, for post-activation noise settings we will work with covariates of the form $\frac1{\sqrt{d}}\cdot\vx_i$, restrict $R = 1$ i.e. $\norm\vwo_2 \leq 1$ so that we can initialize the algorithm (and forever reside) within the unit ball. We also restrict the noise distribution to be unbiased and have bounded support $\cB(0,\varsigma) \subseteq [-\varsigma, \varsigma]$. In other words, $\abs{\epsilon_i} \leq \varsigma$ almost surely. The exact value of $\varsigma > 0$ will be set later.

\subsection{Calculation of ELSS Constants under Noisy Labels}
\label{app:elss-noisy}
We present calculations separately for pre-activation and post-activation noise settings using the sigmoid activation. Note that we need only redo steps 1 and 2 from Appendix~\ref{app:step12-sigmoid} here since the other steps follow identically since our noise is always sub-Gaussian (bounded distributions are always sub-Gaussian). For the analysis, the following result would prove handy:

\begin{remark}
\label{rem:log-inequality}
For any $a, b > 0$, in order to ensure $bt \leq \exp(-at)$, it is sufficient to ensure $t \leq \min\bc{\frac{\exp(-a)}b,1}$.
\end{remark}
\begin{proof}
Since $t \leq 1$ and $a > 0$, we are assured $\exp(-a) \leq \exp(-at)$. However, since $t \leq \frac{\exp(-a)}b$, we get $bt \leq \exp(-a) \leq \exp(-at)$ establishing the result.
\end{proof}

\subsubsection{Pre-activation Noise}
\label{app:elss-noisy-pre}
In this case we have $y_i = \sigma(\vx_i^\top\vwo + \epsilon_i)$ where $\epsilon_i \sim \cD(0,\varsigma)$ is sub-Gaussian noise. Thus, for any $\tau \in [0,1]$ we have $y^\tau_i = \sigma_\tau(\vx_i^\top\vwo + \epsilon_i)$. As before, we have
\[
\nabla^2_\vw\cL_\tau(\vw) = \frac1n\sum s_i\cdot\vx_i\vx_i^\top,
\]
where $s_i = 2\tau^2\br{\underbrace{\br{\sigma^\tau_i - (\sigma^\tau_i)^2}^2}_{(A)} + \underbrace{(\sigma^\tau_i - y^\tau_i)(1 - 2\sigma^\tau_i)\br{\sigma^\tau_i - (\sigma^\tau_i)^2}}_{(B)}}$. The analysis of the contribution of the term $(A)$ goes ahead identically since it does not depend on $y^\tau_i$ at all. However, the analysis for the second term changes since we now have
\[
\abs{(B)} \leq \frac\tau{40}\abs{\vx_i^\top(\vw - \vwo) - \epsilon_i}
\]
Analyzing the contribution of this term on expectation gives us
\begin{align*}
	\E{\abs{\vx^\top(\vw - \vwo) - \epsilon}\ip{\vx}{\vv}^2} &\leq \Ee{x_1, x_2 \sim \cN(0,1)}{\norm{\vw-\vwo}_2\cdot\abs{x_1}\cdot(x_1v_1 + x_2v_2)^2} + \Ee{\substack{\epsilon \sim \cD(0,\varsigma)\\\vx \sim \cN(\vzero, I_d)}}{\abs{\epsilon}\ip{\vx}{\vv}^2}\\
	&\leq r\cdot \Ee{x_1 \sim \cN(0,1)}{\abs{x_1}^3v_1^2 + \abs{x_1}v_2^2} + \Ee{\epsilon \sim \cD(0,\varsigma)}{\abs{\epsilon}} = 2r\sqrt{\frac2\pi} + 6\varsigma,
\end{align*}
where in the second step we used the independence of $\vx$ and $\epsilon$ and the fact that $\E{\ip\vx\vv^2} = \vv^\top\E{\vx\vx^\top}\vv = \vv^\top\vv = \norm\vv_2^2 = 1$ and the third step follows since $\E{\abs X} = \norm X_1 \leq 6\varsigma$ if $X$ is a $\varsigma$-sub-Gaussian random variable (implicit in \cite[Proposition 2.5.2]{Vershynin2018}). Putting all terms together gives us the new ELSS constant for the sigmoid activation.
\[
\Ee{\vx \sim \cN(\vzero, I)}{s\ip{\vx}{\vv}^2} \leq \Lambda_r \equiv \sqrt{\frac2\pi}\tau^2\br{\exp(-\tau\cdot m_r/10) + \frac{\tau\cdot r}{10} + \frac{3\tau\cdot\varsigma}{10}\sqrt{\frac\pi2}}
\]
Applying Remark~\ref{rem:log-inequality} and using the (somewhat loose) upper bound $m_r \leq R$ tells us that if we ensure
\[
\tau \leq 5\exp(-R/10)\cdot\min\bc{\frac1r, \frac1{3\varsigma}\sqrt{\frac2\pi}},
\]
then we are assured that $\Lambda_r \leq 2\tau^2\sqrt{\frac2\pi}\exp(-\tau\cdot R/10)$. The above is equivalent to a temperature cap of
\[
\tau_{\max} := \frac{5\exp(-R/10)}3\sqrt{\frac2\pi}\cdot\frac1\varsigma = \bigO{\frac1\varsigma}
\]
Notice that the only addition is the term corresponding to $\varsigma$ which increases the bound but does not change the essential nature of the bound. Steps 3-6 can now proceed as usual to get a high confidence bound. We next analyze the case of post-activation noise.

\subsubsection{Post-activation Noise}
\label{app:elss-noisy-post}
In this case we have $y_i = \sigma(\vx_i^\top\vwo) + \epsilon_i$ where $\epsilon_i \sim \cB(0,\varsigma) \subseteq [-\varsigma,\varsigma]$ is zero-mean, bounded noise. We also consider normalized covariates $\frac1{\sqrt{d}}\cdot\vx_i$ and restrict $R = 1$ i.e. $\norm\vwo_2 \leq 1$. \cite[Lemma 10]{MukhotyGJK2019}, that tell us that with confidence at least $1 - \delta$, we have $\max_{i\in[n]}\ \norm{\vx_i}_2 \leq \sqrt{d + \ln\frac n\delta}$. Setting $\delta = n\exp(-d)$ gives us, for the normalized covariates $R_X \leq \sqrt2$ with confidence at least $1 - \exp(-\Om d)$ (since we usually have $n = d^{\bigO1}$ so that $d - \ln n = \Om d$). Since $\norm\vw_2 \leq 1$ for all models considered by the algorithm by normalization, the \emph{canonical} parameter values satisfy $\abs{\vx_i^\top\vw} \leq \sqrt2$ which prevents the activated values from approaching either $0$ or $1$ since $\sigma(\vx_i^\top\vw) \in [0.195,0.805]$ if $\abs{\vx_i^\top\vw} \leq \sqrt2$. If we now restrict $\varsigma \leq 0.19$ we ensure that even after noise is added post activation, the labels remain within the valid range $[0.05,0.95]$. We now redo the expectation calculation for the ELSS constant. As before, we have
\[
\nabla_\vw\cL_\tau(\vw) = \frac1n\sum s_i\cdot\vx_i\vx_i^\top,
\]
where $s_i = 2\tau^2\br{\underbrace{\br{\sigma^\tau_i - (\sigma^\tau_i)^2}^2}_{(A)} + \underbrace{(\sigma^\tau_i - y^\tau_i)(1 - 2\sigma^\tau_i)\br{\sigma^\tau_i - (\sigma^\tau_i)^2}}_{(B)}}$. We need to analyze both parts here since there has been rescaling of covariates $\vx_i \mapsto \frac1{\sqrt d}\cdot\vx_i$ and our activation functions are non-linear. We have for part $(A)$,
\begin{align*}
	\E{\br{\sigma^\tau - (\sigma^\tau)^2}^2\frac{\ip{\vx_i}{\vv}^2}d} &= \frac1d\Ee{x_1, x_2 \sim \cN(0,1)}{\frac{\exp(-2\tau\norm{\vw}_2\cdot x_1)}{\br{1+\exp(-\tau\norm{\vw}_2\cdot x_1)}^4}\br{x_1^2v_1^2+x_2^2v_2^2+2x_1x_2v_1v_2}}\\
	&= \frac1d\Ee{x_1 \sim \cN(0,1)}{\frac{\exp(-2\tau/\sqrt d\norm{\vw}_2\cdot x_1)}{\br{1+\exp(-\tau/\sqrt d\norm{\vw}_2\cdot x_1)}^4}\br{x_1^2v_1^2+v_2^2}}\\
	&= \frac1d\sqrt{\frac2\pi}\int_0^\infty \frac{\exp(-2\tau/\sqrt d\norm{\vw}_2\cdot t)}{\br{1+\exp(-\tau/\sqrt d\norm{\vw}_2\cdot t)}^4}\br{v_1^2t^2+v_2^2}\exp\br{-\frac{t^2}2}\ dt\\
	&= \frac1d\sqrt{\frac2\pi}\exp(2c^2)\int_0^\infty \frac{\exp\br{-\frac12(t+2c)^2}}{\br{1+\exp(-c\cdot t)}^4}\br{v_1^2t^2+v_2^2}\ dt\\
	&\leq \frac1d\sqrt{\frac2\pi}\exp(2c^2)\int_0^\infty \exp\br{-\frac12(t+2c)^2}\br{v_1^2t^2+v_2^2}\ dt\\
	&\leq \frac1d\sqrt{\frac2\pi}\exp(-c/10),
\end{align*}
where $c = \tau/\sqrt d\cdot\norm\vw_2 \leq \tau/\sqrt d\cdot m_r$ and we recall that $m_r = \max_{\vw \in \cB(\vwo,r)}\ \norm\vw_2 \leq \norm\vwo_2 + r$. This gives us the contribution of the first term $(A)$. For the analysis of the second term, we now have
\begin{align*}
	\abs{(B)} &\leq \frac\tau{40}\abs{\frac{\vx_i^\top\vw}{\sqrt d} - \sigma^{-1}\br{\sigma\br{\frac{\vx_i^\top\vwo}{\sqrt d}} + \epsilon_i}}\\
	&\leq \frac\tau{40}\br{\abs{\frac{\vx_i^\top(\vw - \vwo)}{\sqrt d}} + 22\abs{\epsilon_i}},
\end{align*}
where the second step follows since the function $\sigma^{-1}$ is $22$-Lipschitz in the range $[0.05,0.95]$. From hereon, the analysis presented above for the pre-activation noise case in Appendix~\ref{app:elss-noisy-pre} holds with minor changes:
\begin{align*}
	\E{\br{\frac{\abs{\vx_i^\top(\vw - \vwo)}}{\sqrt d} + 22\abs{\epsilon_i}}\frac{\ip{\vx}{\vv}^2}d} &\leq \frac1{d\sqrt d}\cdot\Ee{x_1, x_2 \sim \cN(0,1)}{\norm{\vw-\vwo}_2\cdot\abs{x_1}\cdot(x_1v_1 + x_2v_2)^2}\\
	&\quad + \frac{22}d\Ee{\substack{\epsilon \sim \cD(0,\varsigma)\\\vx \sim \cN(\vzero, I_d)}}{\abs{\epsilon}\ip{\vx}{\vv}^2}\\
	&\leq \frac r{d\sqrt d}\cdot \Ee{x_1 \sim \cN(0,1)}{\abs{x_1}^3v_1^2 + \abs{x_1}v_2^2} + \Ee{\epsilon \sim \cD(0,\varsigma)}{\abs{\epsilon}}\\
	&= \frac{2r}{d\sqrt d}\sqrt{\frac2\pi} + \frac{66}d\varsigma,
\end{align*}
where in the second step we used the independence of $\vx$ and $\epsilon$ and the fact that $\E{\ip\vx\vv^2} = 1$ and the third step follows by applying the boundedness of $\cD$. Putting all terms together gives us the new ELSS constant for the sigmoid activation.
\[
\Ee{\vx \sim \cN(\vzero, I)}{\frac sd\ip{\vx}{\vv}^2} \leq \Lambda_r \equiv \sqrt{\frac2\pi}\frac{\tau^2}d\br{\exp\br{-\frac{\tau\cdot m_r}{10\sqrt d}} + \frac{\tau\cdot r}{10\sqrt d} + 3.3\tau\cdot\varsigma\sqrt{\frac\pi2}}
\]
Applying Remark~\ref{rem:log-inequality} and using the (somewhat loose) upper bound $m_r \leq R$ tells us that if we ensure
\[
\tau \leq \exp\br{-\frac R{10\sqrt d}}\cdot\min\bc{\frac{10\sqrt d}r, \frac1{3.3\cdot\varsigma}\sqrt{\frac2\pi}},
\]
then we are assured that $\Lambda_r \leq \frac{2\tau^2}{d}\sqrt{\frac2\pi}\exp\br{-\frac{\tau\cdot R}{10\sqrt d}}$. The above is assured by a temperature cap of
\[
\tau_{\max} := \frac13\exp\br{-\frac R{10\sqrt d}}\sqrt{\frac2\pi}\cdot\frac1{\varsigma} = \bigO{\frac1\varsigma}
\]
Notice that the bounds are scaled by an overall factor of the order of $\frac1d$ apart from minor changes to the internal constants. Steps 3-6 can now proceed as usual to get a high confidence bound.

\subsection{Calculation of ELSC Constants under Noisy Labels}
\label{app:elsc-noisy}

It turns out that the techniques used in Appendix~\ref{app:step12-sigmoid} used to establish ELSC constants without label noise and the techniques used in Appendix~\ref{app:elss-noisy} used to establish ELSS constants with label noise readily combine to offer us generic expressions for lower bounds on the ELSC parameters $\lambda_r$. However, to ensure that $\lambda_r > 0$, we need to impose additional temperature caps. Recall that we have $\nabla^2_\vw\cL_\tau(\vw) = \frac1n\sum s_i\cdot\vx_i\vx_i^\top$, where
\[
s_i = 2\tau^2\br{\underbrace{\br{\sigma^\tau_i - (\sigma^\tau_i)^2}^2}_{(A)} + \underbrace{(\sigma^\tau_i - y^\tau_i)(1 - 2\sigma^\tau_i)\br{\sigma^\tau_i - (\sigma^\tau_i)^2}}_{(B)}},
\]
where we continue to use the shorthands $\sigma^\tau_i := \sigma_\tau(\vx_i^\top\vw)$ and $y^\tau_i := \sigma_\tau(\sigma^{-1}(y_i))$. The term $(A)$ is unaffected by noise and its analysis proceeds similarly as before. However, the term $(B)$ is affected by label noise but was already bounded in the calculations in Appendix~\ref{app:elss-noisy}.

\subsubsection{Pre-activation Noise}
The contribution to the ELSC parameter $\lambda_r$ due to the term $(A)$ continues to remain lower bounded by $\sqrt{\frac2\pi}\tau^2\frac{\exp(-3\tau\cdot m_r)}8$ as observed in Appendix~\ref{app:step12-sigmoid} since $(A)$ does not contain the label $y_i$ at all and is thus unaffected by noise. Recall the shorthand $m_r = \max_{\vw \in \cB(\vwo,r)}\ \norm\vw_2 \leq \norm\vwo_2 + r$. The contribution of the term $(B)$ to diminish $\lambda_r$ on the other hand, apart from constant factors, can be directly taken from Appendix~\ref{app:elss-noisy} to be at most $2r\sqrt{\frac2\pi} + 6\varsigma$. This tells us gives us the following lower bound
\[
\Ee{\substack{\vx \sim \cN(\vzero, I)\\\epsilon\sim\cD(0,\varsigma)}}{s\ip{\vx}{\vv}^2} \geq \lambda_r \equiv \sqrt{\frac2\pi}\tau^2\br{\frac{\exp(-3\tau\cdot m_r)}8 - \frac{\tau\cdot r}{10} - \frac{3\tau\cdot\varsigma}{10}\sqrt{\frac\pi2}}
\]
Applying Remark~\ref{rem:log-inequality} and using the (somewhat loose) upper bound $m_r \leq R$ tells us that if we ensure
\[
\tau \leq \frac{5\exp(-3R)}{12}\cdot\min\bc{\frac1r, \frac1{3\varsigma}\sqrt{\frac2\pi}},
\]
then we are assured that $\lambda_r \geq \frac{\tau^2}{24}\sqrt{\frac2\pi}\exp(-3\tau\cdot R)$. The above is equivalent to a temperature cap of
\[
\tau_{\max} := \frac{5\exp(-3R)}{36}\sqrt{\frac2\pi}\cdot\frac1\varsigma = \bigO{\frac1\varsigma}
\]
Steps 3-6 can now proceed as usual to get a high confidence bound. We next analyze the case of post-activation noise.

\subsubsection{Post-activation Noise}
Due to the rescaling of covariates, the contribution to the ELSC parameter $\lambda_r$ due to the term $(A)$ is modified to now be lower bounded by $\sqrt{\frac2\pi}\frac{\tau^2}d\frac{\exp(-3\tau\cdot m_r/\sqrt d)}8$ as seen from calculations in Appendix~\ref{app:step12-sigmoid} and applying the rescaling modifications from Appendix~\ref{app:elss-noisy}. However, $(A)$ does not contain the label $y_i$ at all and is thus unaffected by noise. Recall that $m_r = \max_{\vw \in \cB(\vwo,r)}\ \norm\vw_2 \leq \norm\vwo_2 + r$. The contributions of the term $(B)$ are once again readily taken from Appendix~\ref{app:elss-noisy} to be at most $\frac{2r}{d\sqrt d}\sqrt{\frac2\pi} + \frac{66}d\varsigma$ apart from multiplicative constants. Put together, this gives us the following lower bound
\[
\Ee{\substack{\vx \sim \cN(\vzero, I)\\\epsilon\sim\cB(0,\varsigma)}}{s\ip{\vx}{\vv}^2} \geq \lambda_r \equiv \sqrt{\frac2\pi}\frac{\tau^2}d\br{\frac{\exp(-3\tau\cdot m_r/\sqrt d)}8 - \frac{\tau\cdot r}{10\sqrt d} - 3.3\tau\cdot\varsigma\sqrt{\frac\pi2}}
\]
Applying Remark~\ref{rem:log-inequality} and using the (somewhat loose) upper bound $m_r \leq R$ tells us that if we ensure
\[
\tau \leq \frac{\exp(-3R/\sqrt d)}8\cdot\min\bc{\frac{10\sqrt d}r, \frac1{3.3\cdot\varsigma}\sqrt{\frac2\pi}},
\]
then we are assured that $\lambda_r \geq \frac{\tau^2}{24d}\sqrt{\frac2\pi}\exp\br{-\frac{3\tau\cdot R}{\sqrt d}}$. The above is assured by a temperature cap of
\[
\tau_{\max} := \frac{\exp(-3R/\sqrt d)}{26}\sqrt{\frac2\pi}\cdot\frac1{\varsigma} = \bigO{\frac1\varsigma}
\]
Steps 3-6 can now proceed as usual to get a high confidence bound.

\subsection{Convergence Rate for \alg under Noisy Labels: a proof of Theorem~\ref{thm:conv-noisy-sigmoid}}
\label{app:conv-rate-noisy}

We will elaborate this argument taking the example of sigmoid activation as before. \alg as described in Algorithms~\ref{algo:main} and \ref{algo:sgd} increments the temperature till it hits the temperature cap. For noiseless settings, the temperature cap is the trivial unity but it is a non-trivial cap $\tau_{\max}$ as calculated in Appendices~\ref{app:elss-noisy} and \ref{app:elsc-noisy}. This temperature cap will be hit in $\bigO1$ steps of temperature increments with the constants depending on the temperature increment rate $\beta$ and the initial temperature, both of which depend on $R$.

After the temperature cap has been reached, standard results on the convergence of GD for strongly convex and smooth objectives tell us that \alg will start optimizing the objective $\cL_\taumax$ (since the temperature cannot be incremented anymore) and will approach its optimum at a linear rate of convergence. Let $\tvw = \arg\min\cL_\taumax$ denote the optimum of this objective. To prove Theorem~\ref{thm:conv-noisy-sigmoid}, all we need to do is upper bound $\norm{\vwo - \tvw}_2$ which we do below. In the following $\lambda, \Lambda$ denote the ELSC/ELSS parameters assured at the point the temperature cap is reached.

First, we apply strong convexity of $\cL_{\taumax}$ twice to obtain
\begin{align*}
	\cL_\taumax(\tvw) &\geq \cL_\taumax(\vwo) + \ip{\nabla\cL_\taumax(\vwo)}{\vwo - \tvw} + \frac{\lambda}2\norm{\vwo - \tvw}_2^2\\
	\cL_\taumax(\vwo) &\geq \cL_\taumax(\tvw) + \ip{\nabla\cL_\taumax(\tvw)}{\tvw - \vwo} + \frac{\lambda}2\norm{\vwo - \tvw}_2^2
\end{align*}
Adding the two equations and noticing that $\nabla\cL_\taumax(\tvw)$ since $\tvw$ is the optimum of the differentiable objective $\cL_\taumax$and applying the Cauchy-Schwartz inequality, we get
\[
\norm{\vwo - \tvw}_2 \leq \frac1\lambda\cdot\norm{\nabla\cL_\taumax(\vwo)}_2
\]
In the remainder of the proof, we bound $\norm{\nabla\cL_\taumax(\vwo)}_2$. We use the shorthands $\sigma^\tau_i \deff \sigma_\tau(\vx_i^\top\vwo), y^\tau_i = \sigma_\tau(\sigma^{-1}(y_i))$. We first note the expression of this gradient for the sigmoid activation case. To avoid notation clutter, we use $\tau$ instead of $\taumax$ everywhere.
\[
\nabla\cL_\tau(\vwo) = \frac{2\tau}n\sum_{i=1}^n(\sigma^\tau_i - y^\tau_i)\br{\sigma^\tau_i - (\sigma^\tau_i)^2}\cdot\vx_i = \frac{2\tau}nX\vzeta,
\]
where $\vzeta \in \bR^n$ is the vector collecting the values $\bs{(\sigma^\tau_i - y^\tau_i)\br{\sigma^\tau_i - (\sigma^\tau_i)^2}}_{i=1}^n$. Now, standard results for instance \cite[Lemma 14]{BhatiaJK2015} tell us that with confidence at least $1 - \exp(-\Om n)$, we have $\norm X_2 \leq \sqrt{5n}$. This means that
\[
\norm{\nabla\cL_\tau(\vwo)}_2 = \sqrt{\frac{20\tau^2}n}\norm\vzeta_2 \leq \sqrt{\frac{20\tau^2}n\sum_{i=1}^n \br{\sigma^\tau_i - y^\tau_i}^2},
\]
since $\br{\sigma^\tau_i - (\sigma^\tau_i)^2} \in [0,1]$ for all $i \in [n]$. Thus, the following quantity needs to be bounded which we do separately below for the pre- and post-activation settings.
\[
(Q) \deff \sum_{i=1}^n \br{\tsigma^\tau_i - y^\tau_i}^2
\]
\subsubsection{Pre-Activation Noise}
In this case we have $y^\tau_i = \sigma_\tau(\vx_i^\top\vwo + \epsilon_i)$ where $\epsilon_i \sim \cD(0,\varsigma)$ so that if we let $\vepsilon = [\epsilon_1,\epsilon_2,\ldots,\epsilon_n] \in \bR^n$ be the vector of noise values used to corrupt the labels pre-activation, then we have
\begin{align*}
	(Q) &= \sum_{i=1}^n \br{\sigma_\tau(\vx_i^\top\vwo) - \sigma_\tau(\vx_i^\top\vwo + \epsilon_i)}^2\\
	&\leq \frac{\tau^2}{16}\sum_{i=1}^n \br{\vx_i^\top\vwo - (\vx_i^\top\vwo + \epsilon_i))}^2\\
	&\leq \frac{\tau^2}{16}\norm\vepsilon_2^2 \leq \frac{5\tau^2}{16}n\varsigma^2 \leq n\tau^2\varsigma^2
\end{align*}
where the second step follows since the $\sigma_\tau(\cdot)$ function is $\frac\tau4$-Lipschitz and the fourth step follows from standard results for sub-Gaussian random variables \cite[Proposition 2.5.2]{Vershynin2018} that show that $\E{X^2} = \norm X_2^2 = \bigO{\varsigma^2}$ for a $\varsigma$-sub Gaussian variable $X$ and applying Bernstein inequality to the subexponential variable $X^2$ to see that with confidence at least $1 - \exp(-\Om n)$, we have $\norm\vepsilon_2^2 \leq 5n\varsigma^2$.

\subsubsection{Post-Activation Noise}
In this case we have $y^\tau_i = \sigma_\tau\br{\sigma^{-1}\br{\sigma\br{\frac{\vx_i^\top\vwo}{\sqrt d}} + \epsilon_i}}$ where $\epsilon_i \sim \cB(0, \varsigma) \subseteq [-\varsigma,\varsigma]$ is the finite-support noise distribution introduced in Appendix~\ref{app:noise-models}. Recall that covariates are normalized in this setting. Continuing to use the notation $\vepsilon \in \bR^n$ to denote the vector of noise values used to corrupt the labels post-activation, then we have
\begin{align*}
	(Q) &= \sum_{i=1}^n \br{\sigma_\tau\br{\frac{\vx_i^\top\vwo}{\sqrt d}} - \sigma_\tau\br{\sigma^{-1}\br{\sigma\br{\frac{\vx_i^\top\vwo}{\sqrt d}} + \epsilon_i}}}^2\\
	&\leq \frac{\tau^2}{16}\sum_{i=1}^n \br{\frac{\vx_i^\top\vwo}{\sqrt d} - \sigma^{-1}\br{\sigma\br{\frac{\vx_i^\top\vwo}{\sqrt d}} + \epsilon_i}}^2\\
	&\leq \frac{22\tau^2}{16} \norm\vepsilon_2^2 \leq \frac{22\tau^2}{16}n\varsigma^2 \leq 2n\tau^2\varsigma^2
\end{align*}
where the second step follows since the $\sigma_\tau(\cdot)$ function is $\frac\tau4$-Lipschitz, the third step uses the fact that the inverse function $\sigma^{-1}$ is $22$-Lipschitz in the interval $[0.05,0.95]$ and we have ensured that all labels reside in that interval, and the last step follows since $\abs{\epsilon_i} \leq \varsigma$ since the noise distribution has bounded support.

Thus, in both cases, apart from exact constants, we obtain $(Q) \leq 2n\tau^2\varsigma^2$. This tells us that
\[
\norm{\nabla\cL_\taumax(\vwo)}_2 \leq \varsigma\tau_{\max}^2\sqrt{40} = \bigO{\varsigma}
\]
This finishes the proof of Theorem~\ref{thm:conv-noisy-sigmoid}.

\subsection{Consistent Recovery under Low-variance Post-activation Noise: a Proof of Theorem~\ref{thm:consistent-post-noisy-sigmoid}}
\label{app:consistent-noisy}
For the post-activation noise case, the analyses in Appendices~\ref{app:elss-noisy} and \ref{app:elsc-noisy} respectively impose a temperature cap of
\begin{align*}
	\tau_{\max}^\text{ELSS} &= \frac13\exp\br{-\frac R{10\sqrt d}}\sqrt{\frac2\pi}\cdot\frac1{\varsigma}\text{ and}\\
	\tau_{\max}^\text{ELSC} &= \frac{\exp(-3R/\sqrt d)}{26}\sqrt{\frac2\pi}\cdot\frac1{\varsigma}
\end{align*}
respectively. $\tau_{\max}^\text{ELSC}$ being smaller is the stricter of the two. Suppose $\varsigma$ is small enough so that $\tau_{\max}^\text{ELSC} \geq 1$ i.e. suppose
\[
\varsigma \leq \frac{\exp(-3R/\sqrt d)}{26}\sqrt{\frac2\pi}.
\]
Then the temperature cap becomes vacuous since \alg anyway never considers temperature values $\tau > 1$. Thus, in this case, we effectively have $\taumax = 1$. However, notice that in this case, since $\sigma_1 \equiv \sigma$ we have
\begin{align*}
	\nabla\cL_\taumax(\vwo) = \nabla\cL_1(\vwo) &= \frac{2\tau}n\sum_{i=1}^n(\sigma^1_i - y^1_i)\br{\sigma^1_i - (\sigma^1_i)^2}\cdot\vx_i\\
	&= \frac{2\tau}n\sum_{i=1}^n\br{\sigma\br{\frac{\vx_i^\top\vwo}{\sqrt d}} - \sigma\br{\sigma^{-1}\br{\sigma\br{\frac{\vx_i^\top\vwo}{\sqrt d}} + \epsilon_i}}}\br{\sigma^1_i - (\sigma^1_i)^2}\cdot\vx_i\\
	&= \frac{2\tau}n\sum_{i=1}^n\epsilon_i\br{\sigma^1_i - (\sigma^1_i)^2}\cdot\vx_i
\end{align*}

Now notice that since noise is unbiased, we have $\E{\cond{\epsilon_i}\vx_i,\vwo} = 0$. By linearity of expectation, for any fixed unit vector $\vv \in S^{d-1}$, we have
\[
\E{\ip{\nabla\cL_\taumax(\vwo)}\vv} = \frac{2\tau}n\cdot\E{\sum_{i=1}^n\epsilon_i\br{\sigma^1_i - (\sigma^1_i)^2}\cdot\ip{\vx_i}\vv} = 0
\]
since noise is independently sampled and unbiased. Now, since $\sigma^1_i - (\sigma^1_i)^2 \leq 1$, the noise is $\varsigma$-bounded hence $\varsigma$-sub-Gaussian by assumption and the random variable $\ip{\vx_i}{\vv}$ is $1$-subGaussian since $\vv$ is unit norm and $\cN(\vzero, I_d)$ is a $1$-sub-Gaussian vector distribution, the random variable $\epsilon_i\br{\sigma^1_i - (\sigma^1_i)^2}\cdot\ip{\vx_i}\vv$ is $\varsigma$-subexponential. Applying the Bernstein bound \cite[Theorem 2.8.1]{Vershynin2018} tells us that
\[
\P{\abs{\sum_{i=1}^n\epsilon_i\br{\sigma^1_i - (\sigma^1_i)^2}\cdot\ip{\vx_i}\vv} \geq t} \leq 2\exp\br{-\frac{ct^2}{n\varsigma^2}},
\]
for some small universal constant $c > 0$. Taking a union bound over a $\frac12$-net $\cN_{1/2}$ over the unit sphere (that has atmost $5^d$ elements) tells us that
\[
\P{\exists \vv \in \cN_{1/2}: \abs{\sum_{i=1}^n\epsilon_i\br{\sigma^1_i - (\sigma^1_i)^2}\cdot\ip{\vx_i}\vv} \geq t} \leq 2\cdot 5^d\exp\br{-\frac{ct^2}{n\varsigma^2}}
\]
If for any $\vv \in S^{d-1}$, we let $\vv_0$ denote its closest net point i.e. $\norm{\vv - \vv_0}_2 \leq \frac12$, then for any vector $\vz \in \bR^d$ we have
\[
\norm\vz_2 = \sup_{\vv \in S^{d-1}}\ip\vz\vv = \ip\vz{\vv_0} + \ip\vz{\vv-\vv_0} \leq \sup_{\vv_0 \in \cN_{1/2}}\ \ip\vz{\vv_0} + \frac12\norm\vz_2,
\]
where in the last step we used the Cauchy-Schwartz inequality. This tells us that
\[
\norm\vz_2 \leq 2\sup_{\vv_0 \in \cN_{1/2}}\ \ip\vz{\vv_0}
\]
The above argument tells us that
\[
\P{\norm{\sum_{i=1}^n\epsilon_i\br{\sigma^1_i - (\sigma^1_i)^2}\cdot\vx_i}_2 \geq 2t} \leq 2\cdot 5^d\exp\br{-\frac{ct^2}{n\varsigma^2}}
\]
Taking $t = \sqrt{\frac{3dn\varsigma^2}c}$ and introducing the constant factors tells us that with probability at least $1-\exp(-3d)$, we have
\[
\norm{\nabla\cL_\taumax(\vwo)}_2 = \frac{2\tau}n\cdot\norm{\sum_{i=1}^n\epsilon_i\br{\sigma^1_i - (\sigma^1_i)^2}\cdot\vx_i}_2 \leq 4\tau\varsigma\sqrt{\frac d{cn}}
\]
This implies that $\norm{\tvw - \vwo}_2 \leq \bigO{\varsigma\sqrt{\frac dn}}$. Due to the linear rate of convergence offered by standard guarantees for GD-style methods on strongly convex and smooth objectives, within $\bigO{\ln\frac1\epsilon}$ iterations after reaching unit temperature (which itself takes $\bigO1$ iterations), \alg is able to reach within $\epsilon$ distance of $\tvw = \arg\min\cL(\vw)$ which the above analysis shows is at most $\bigO{\varsigma\sqrt{\frac dn}}$ away from $\vwo$. Applying the triangle inequality now finishes the proof of Theorem~\ref{thm:consistent-post-noisy-sigmoid}.

\end{document}